\documentclass{amsart}
\pdfoutput=1

\usepackage{amssymb}
\usepackage{microtype}
\usepackage{tikz}
\usepackage{booktabs}
\usepackage[pdftex,colorlinks,citecolor=black,linkcolor=black,urlcolor=black,bookmarks=false]{hyperref}
\def\MR#1{\href{http://www.ams.org/mathscinet-getitem?mr=#1}{MR#1}}
\def\arXiv#1{arXiv:\href{http://arXiv.org/abs/#1}{#1}}
\usepackage{doi}

\newtheorem{theorem}{Theorem}[section]
\newtheorem{proposition}[theorem]{Proposition}
\newtheorem{lemma}[theorem]{Lemma}
\newtheorem{corollary}[theorem]{Corollary}
\newtheorem{conjecture}[theorem]{Conjecture}

\numberwithin{equation}{section}
\numberwithin{table}{section}

\newcommand{\ZZ}{\mathbb{Z}}
\newcommand{\RR}{\mathbb{R}}
\newcommand{\CC}{\mathbb{C}}
\newcommand{\EE}{\mathbb{E}}
\newcommand{\HH}{\mathbb{H}}

\newcommand{\SL}{\mathop{\mathrm{SL}}}
\newcommand{\erf}{\mathop{\mathrm{erf}}}

\makeatletter
\newsavebox\tboxa
\newsavebox\tboxb
\newlength\tdima
\newcommand*{\oversymb}{\mathpalette\@oversymb}
\newcommand*{\@oversymb}[2]{%
    \sbox{\tboxa}{$\m@th#1\mathrm{#2}$}%
    \setbox\tboxb\null%
    \ht\tboxb\ht\tboxa%
    \dp\tboxb\dp\tboxa%
    \wd\tboxb\wd\tboxa%
    \sbox{\tboxa}{$\m@th#1{#2}$}%
    \setlength\tdima{\the\wd\tboxa}%
    \addtolength\tdima{-\the\wd\tboxb}%
    \sbox{\tboxb}{$\m@th#1\hskip\tdima\overline{\xusebox{\tboxb}}$}%
    \rlap{\usebox\tboxb}{\usebox\tboxa}}
\newcommand*{\xusebox}[1]{\mathord{{\usebox{#1}}}}

\newcommand{\abs}[1]{\left\lvert #1 \right\rvert}
\newcommand{\ang}[1]{\left\langle #1 \right\rangle}
\newcommand{\paren}[1]{\left( #1 \right)}
\newcommand{\set}[1]{\left\{ #1 \right\}}
\newcommand{\ol}[1]{\overline{#1}}
\newcommand{\os}[1]{\oversymb{#1}}
\newcommand{\vol}{\mathop{\mathrm{vol}}}
\newcommand{\wht}[1]{\widehat{#1}}

\title{Sphere packing bounds via spherical codes}

\author{Henry Cohn}
\address{Microsoft Research New England\\
One Memorial Drive\\
Cambridge, MA 02142} \email{cohn@microsoft.com}

\author{Yufei Zhao}
\address{Department of Mathematics\\
Massachusetts Institute of Technology\\
Cambridge, MA 02139} \email{yufeiz@mit.edu}

\date{December 20, 2013}

\thanks{Zhao was supported by an internship at Microsoft Research
New England.}

\begin{document}

\begin{abstract}
The sphere packing problem asks for the greatest density of a packing
of congruent balls in Euclidean space. The current best upper bound in
all sufficiently high dimensions is due to Kabatiansky and Levenshtein
in 1978. We revisit their argument and improve their bound by a
constant factor using a simple geometric argument, and we extend the
argument to packings in hyperbolic space, for which it gives an
exponential improvement over the previously known bounds. Additionally,
we show that the Cohn-Elkies linear programming bound is always at
least as strong as the Kabatiansky-Levenshtein bound; this result is
analogous to Rodemich's theorem in coding theory.  Finally, we develop
hyperbolic linear programming bounds and prove the analogue of
Rodemich's theorem there as well.
\end{abstract}

\maketitle

\section{Introduction} \label{sec:intro}

What is the densest arrangement of non-overlapping, congruent balls in
$\RR^n$?  This problem has a long history and has been extensively
studied \cite{CS99}, and it has strong connections with physics and
information theory \cite{C10}. With the proof of Kepler's conjecture by
Hales \cite{Hal05}, the sphere packing problem has been solved in up to three
dimensions, but no proof of optimality is known in any higher
dimension, and there are only a few dozen cases in which there are even
plausible conjectures for the densest packing. In $\RR^8$ and
$\RR^{24}$ there are upper bounds that are remarkably close to the
densities of the $E_8$ and Leech lattices, respectively; for example,
Cohn and Kumar \cite{CK04,CK09} came within a factor of $1+10^{-14}$ of the
density of $E_8$ and a factor of $1 + 1.65 \cdot 10^{-30}$ of the
density of the Leech lattice.  However, in most dimensions we must be
content with much cruder bounds. In this paper, we will slightly
improve the best upper bounds known in high dimensions, show how to obtain
them via linear programming bounds, and extend them to hyperbolic
space.

The \emph{density} of a sphere packing in $\RR^n$ is the fraction of
space covered by the balls in the packing.  More precisely, let
$B_R^n(x)$ denote the ball of radius $R$ centered at $x$; then the
density of a packing is the limit as $R \to \infty$ of the fraction of
$B_R^n(x)$ covered by the packing (the limit is independent of $x$ if it
exists). Of course this limit need not exist, but one can replace it
with the \emph{upper density} defined with a limit superior, and one
can show that the least upper bound of the upper densities of
all sphere packings in $\RR^n$ is actually achieved as the density of a
packing (see \cite{G63}). Let $\Delta_{\RR^n}$ denote this maximal packing density.

A \emph{spherical code} in dimension $n$ with minimum angle $\theta$ is
a set of points on the unit sphere in $\RR^n$ with the property that no
two points subtend an angle less than $\theta$ at the origin. In other
words, $\ang{x,y} \leq \cos \theta$ for all pairs of distinct points
$x, y$ in the spherical code. Let $A(n,\theta)$ denote the greatest
size of such a spherical code.

In this paper, we consider the problem of finding upper bounds for
packing density. Linear programming bounds have proven to be a powerful
tool. This technique was first developed by Delsarte \cite{Del72} in
the setting of error-correcting codes, and his method can be extended
to many other settings. In particular, Delsarte, Goethals, and Seidel
\cite{DGS77} and Kabatiansky and Levenshtein \cite{KL78} independently
formulated a linear program for proving upper bounds on $A(n,\theta)$.
Using this approach, Kabatiansky and Levenshtein found excellent upper
bounds on $A(n,\theta)$ for large $n$, and they then applied a
geometric argument to deduce a bound on $\Delta_{\RR^n}$. Their upper
bound is currently the best bound known for $n \geq 115$ (see
Appendix~\ref{app:numerical}). It has the asymptotic form
\begin{equation}
  \label{eq:KL}
  \Delta_{\RR^n} \leq 2^{-(0.5990\ldots+o(1))n},
\end{equation}
while the best lower bound known remains $2^{-(1+o(1))n}$ despite
recent improvements \cite{V11,V13}.

Cohn and Elkies \cite{CE03} found a more direct approach to bounding
sphere packing densities, with no need to consider spherical codes.
Their technique set new records in every case with $n \ge 4$ for which
the calculations were carried out; see Appendix~\ref{app:numerical} for more
details, and see
Theorem~1.4 in \cite{LOV12} for subsequent improvements when $n = 4$, $5$, $6$, $7$,
and $9$. However, despite the evidence from low dimensions, the
asymptotic behavior of the Cohn-Elkies bound is far from obvious and it
has been unclear whether it improves on, or even matches, the
Kabatiansky-Levenshtein bound asymptotically.  Until this paper, it was
only known how to use the Cohn-Elkies linear program to match the
``second-best bound'' by Levenshtein~\cite{Lev79} (see Section~6 of
\cite{CE03}).

The purpose of this paper is fourfold. In Section~\ref{sec:geometric}
we improve the Kabatiansky-Levenshtein bound by a constant factor by
giving a simple modification of their geometric argument relating
spherical codes to sphere packings. (This does not change the
exponential decay rate in bound \eqref{eq:KL}). In Section~\ref{sec:LP}
we show that in every dimension $n$, the Cohn-Elkies linear program can
always match the Kabatiansky-Levenshtein approach. This further
demonstrates the power of the linear programming bound for sphere
packing.  In Section~\ref{sec:hyperbolic} we prove an analogue of the
Kabatiansky-Levenshtein bound in hyperbolic space.  The resulting bound
behaves the same as \eqref{eq:KL} asymptotically, and it is
exponentially better than the best bound previously known in hyperbolic
space.  Finally, in Section~\ref{sec:hypLP}, we develop the theory of
hyperbolic linear programming bounds (based partly on unpublished work
of Cohn, Lurie, and Sarnak) and prove that they too subsume the
Kabatiansky-Levenshtein approach.

\section{Geometric argument} \label{sec:geometric}

In all sufficiently high dimensions, the best upper bound currently
known for sphere packing density is given by Kabatiansky and
Levenshtein \cite{KL78} (see also Chapter~9 of \cite{CS99} and
Chapter~8 of \cite{Zon99}). They first obtain an upper bound on $A(n,
\theta)$ using linear programming and then use the inequality
\begin{equation}
\label{eq:lift}
\Delta_{\RR^n} \leq \sin^n (\theta/2) A(n+1, \theta).
\end{equation}
The inequality was derived using a simple geometric argument. Here we
improve it using an equally simple argument.

\begin{proposition} \label{prop:compare}
For all $n \ge 1$ and $\pi/3 \le \theta \le \pi$,
\begin{equation}
\label{eq:prop-compare}
\Delta_{\RR^n} \leq \sin^n (\theta/2) A(n, \theta).
\end{equation}
\end{proposition}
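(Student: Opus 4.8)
The plan is to exploit a sphere packing in $\RR^n$ to manufacture a spherical code of the same dimension (rather than dimension $n+1$, as in the Kabatiansky--Levenshtein argument), losing only a factor of $\sin^n(\theta/2)$ in density. The basic mechanism is the familiar one: given a saturated packing by balls of radius $r$, each point of $\RR^n$ lies within distance $2r$ of some sphere center, so balls of radius $2r$ about the centers cover space and hence the center density, in the sense of number of centers per unit volume, is at least $\Delta_{\RR^n}/(2^n \vol B_1^n)$ divided by nothing subtle; more precisely the packing density $\Delta$ and the covering-by-doubled-balls give $\Delta_{\RR^n} \le 2^n \cdot (\text{density of the original packing})$ only in the wrong direction, so instead I would argue directly with a near-optimal packing and count centers in a large ball.

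First I would fix $\theta \in [\pi/3,\pi]$ and take a packing of $\RR^n$ by balls of radius $1$ whose density is arbitrarily close to $\Delta_{\RR^n}$; let $N(R)$ be the number of centers lying in $B_R^n(0)$, so that $N(R)\,\vol B_1^n \big/ \vol B_R^n \to \Delta_{\RR^n}$ as $R\to\infty$ (after adjusting for boundary effects, which vanish in the limit). Second, from this configuration of $N(R)$ points that are pairwise at distance at least $2$, I would produce a spherical code: rescale the points in $B_R^n(0)$ to lie on the sphere of some radius $\rho$ by radial projection, or better, invoke the standard trick of placing the $N(R)$ centers on a sphere of radius $\rho = 1/(2\sin(\theta/2))$ --- but since they are not on a sphere, the cleaner route is to note that points at mutual Euclidean distance $\ge 2$ inside a ball of radius $R$, when we send $R\to\infty$, cannot all sit near the boundary, so a genuinely spherical arrangement requires a more careful comparison.

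The honest approach, and the one I would actually carry out, is the following volumetric one. Consider an optimal spherical code of $A(n,\theta)$ points on the unit sphere $S^{n-1}$; around each code point $x$ place the spherical cap of angular radius $\theta/2$, i.e.\ the set of unit vectors within angle $\theta/2$ of $x$. By the minimum-angle condition these caps have disjoint interiors on $S^{n-1}$, so $A(n,\theta)$ times the normalized measure of one cap is at most $1$. Separately --- and this is the new geometric input --- I would show that one can build a packing of $\RR^n$ out of such a spherical code by taking the cone over each cap: the cap of angular radius $\theta/2$ about $x$, coned to the origin and truncated at radius $1$, contains a ball of radius $\sin(\theta/2)$ (tangent to the bounding cone), and these cone-pieces tile a region of $\RR^n$, giving a periodic-in-angle packing whose density in the unit ball equals $A(n,\theta)$ times the volume of a ball of radius $\sin(\theta/2)$ divided by $\vol B_1^n$; iterating this construction radially (scaling by powers of some factor) fills all of $\RR^n$ while preserving the density, yielding $\Delta_{\RR^n} \ge \sin^n(\theta/2)\, A(n,\theta)$ --- which is the reverse of what we want. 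So the correct direction is the contrapositive: from a near-optimal \emph{packing} I extract a \emph{code}. Take the near-optimal packing by unit balls, look at the centers within a thin spherical shell $\{\,x : R \le \abs{x} \le R+2\,\}$, project them radially onto the sphere of radius $R+1$; two such centers at Euclidean distance $\ge 2$ project to points subtending an angle $\theta'$ with $2(R+1)\sin(\theta'/2) \ge$ (something close to $2$), so for $R$ large the angle exceeds any $\theta < \pi$ with $\sin(\theta/2) < 1/(R+1)\cdot(\text{stuff})$ --- this forces $\theta$ small, which is wrong again.

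Let me state the plan I am confident is right. The key is the inequality $\Delta_{\RR^n} \le \sin^n(\theta/2)\,A(n,\theta)$ should follow by a \emph{direct} caps-in-a-ball count: given any packing of $\RR^n$ by balls of radius $r = \sin(\theta/2)$, restricted to a large ball $B_R^n(0)$, replace each ball by the radial cone-sector it determines and project to the sphere $S^{n-1}_R$ of radius $R$; the projected images of distinct balls are caps that are \emph{essentially disjoint} and each has angular radius at least $\arcsin(r/R)\cdot R = $ --- no. The structurally honest version: cover $\RR^n\setminus\{0\}$ by the sets $U_k = \{\,x : \rho^k \le \abs{x} < \rho^{k+1}\,\}$ with $\rho = 1/(1-2\sin(\theta/2))$ chosen appropriately; within one annulus $U_k$ a packing of $\RR^n$ by balls of radius $\sin(\theta/2)\rho^k$ restricts to a bounded configuration, each ball's center projects radially to $S^{n-1}$, the minimum angle between projected centers is at least $\theta$ (because two centers at Euclidean distance $\ge 2\sin(\theta/2)\rho^k$, both at radius between $\rho^k$ and $\rho^{k+1}$, subtend an angle whose chord is at least the in-annulus distance bound, giving angle $\ge \theta$), hence the number of balls meeting $U_k$ is at most $A(n,\theta)$; finally the volume of $U_k$ is $(\rho^n - 1)\vol B_1^n \rho^{kn}/1$ while each ball has volume $(\sin(\theta/2)\rho^k)^n \vol B_1^n$, and summing over $k$, the density is at most $A(n,\theta)\sin^n(\theta/2)\rho^{kn}\vol B_1^n \big/ \big[(\rho^n-1)\vol B_1^n \rho^{kn}\big]$; letting the packing approach optimality and optimizing the annulus ratio $\rho \to 1$ kills the $(\rho^n-1)$ correction in the limit. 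The hardest step is verifying the angle bound in the annulus --- i.e.\ that two packing centers in a thin annulus of inner radius $\rho^k$, separated by Euclidean distance at least $2\sin(\theta/2)\rho^k$, really do subtend an angle at least $\theta$ at the origin --- since this is exactly where the constant-factor improvement over \eqref{eq:lift} comes from, and it requires the condition $\theta \ge \pi/3$ (so that $2\sin(\theta/2)\ge 1$) together with $\rho$ close enough to $1$.
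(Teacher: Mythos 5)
Your proposal contains the right geometric kernel but the final annulus argument you settle on does not go through, for several reasons.

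First, the angle estimate is false as stated. If two centers are at mutual Euclidean distance at least $2\sin(\theta/2)\rho^k$ and both lie in the annulus $\rho^k \le \abs{x} < \rho^{k+1}$, then applying the law-of-cosines lemma (rescale everything by $\rho^k$, then by $\sin(\theta/2)$) gives that the angle subtended at the origin is at least $\theta''$ where $\sin(\theta''/2) = \sin(\theta/2)/\rho$, which is \emph{strictly less than} $\theta$ whenever $\rho > 1$. This is exactly what fails: $A(n,\theta'') \geq A(n,\theta)$, so the per-annulus count is weaker than you want, and since $A(n,\cdot)$ is a step function you cannot simply pass to the limit $\rho \to 1$ without extra care. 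Second, even accepting the angle claim, the density computation you write down produces a bound of the form $A(n,\theta)\sin^n(\theta/2)/(\rho^n-1)$, and the $(\rho^n-1)$ sits in the \emph{denominator}: letting $\rho \to 1$ does not ``kill the correction,'' it makes the bound diverge. Third, the object you build has ball radii $\sin(\theta/2)\rho^k$ growing geometrically across annuli, so it is not a packing by congruent balls and does not directly bound $\Delta_{\RR^n}$.

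What is missing is a simple averaging step that replaces all of the annulus bookkeeping. Fix a packing by unit balls of density $\Delta$ and let $R = 1/\sin(\theta/2)$; the hypothesis $\theta \ge \pi/3$ is exactly $R \le 2$. Averaging over the position of a ball $B_R^n$ shows there is a placement whose interior contains at least $\Delta R^n$ of the sphere centers, and (almost surely) none of them coincides with the center of $B_R^n$. Radially project those centers from the center of $B_R^n$ onto its boundary sphere. The triangle lemma with $\abs{XY} \ge 2$, $\abs{XZ}, \abs{YZ} \le R \le 2$ gives $\angle XZY \ge \theta$, so the projected points form a spherical code in $S^{n-1}$ with minimum angle $\theta$, hence $\Delta R^n \le A(n,\theta)$, which is the claim. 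You came close to this --- your first paragraph gestures at an averaging argument and you correctly isolate both the projection mechanism and the role of $\theta \ge \pi/3$ --- but the version you committed to replaces the averaging by a tiling that is both quantitatively off ($\theta'' < \theta$) and structurally mismatched (variable radii, divergent $\rho\to 1$ limit).
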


Since the unit sphere in $\RR^n$ can be embedded in the unit sphere in
$\RR^{n+1}$ via a hyperplane through the origin, we always have
$A(n,\theta) \le A(n+1,\theta)$, with strict inequality when $\theta
\le \pi/2$.  The applications of \eqref{eq:lift} have $\pi/3 \le
\theta \le \pi/2$, so Proposition~\ref{prop:compare} will be a
strict (though small) improvement.  Neither inequality is useful in low
dimensions; for example, when $n=2$ and $\theta=\pi/3$,
Proposition~\ref{prop:compare} says that $\Delta_{\RR^2} \le 3/2$.
However, these inequalities are valuable in high dimensions.

For the sake of comparison, let us first recall the proof of \eqref{eq:lift}.

\begin{proof}[Proof of \eqref{eq:lift}]
Suppose we have a sphere packing in $\RR^n$ of density $\Delta$ using
unit spheres. Consider a sphere $S_R^{n}$ in $\RR^{n+1}$ of radius $R$
(to be chosen later), and place the sphere packing in $\RR^n$ onto a
hyperplane through the center of $S_R^{n}$, with the packing translated
so that at least $\Delta R^n$ of the sphere centers are contained in
$S_R^{n}$. This is always possible by an averaging argument: a randomly
chosen translation will lead to an average of $\Delta R^n$ sphere centers
in $S_R^{n}$.  Project
the sphere centers onto the upper hemisphere of $S_R^{n}$, orthogonally
to the hyperplane. The projections of the sphere centers are still at
least distance two apart, and thus separated by angles of at least
$\theta$, where $\sin(\theta/2) = 1/R$. Therefore, $\Delta R^n \leq
A(n+1,\theta)$, which is the bound that we wanted to prove, and we can
achieve any angle by choosing $R$ accordingly.
\end{proof}

Our motivation for revisiting this argument is that it feels somewhat
unnatural to lift to a higher dimension in the process. Our proposition
shows that a stronger inequality can be obtained without going to a
higher dimension.  The proof is similar to the techniques of
\cite{HST10} and \cite{BM07}, but this application appears to be new.

\begin{figure}
\centering
\begin{tikzpicture}[scale=1]
\path[use as bounding box] (-0.25,-0.25) rectangle (3.75,2.848076);
\draw[->] (1.8,1.35) -- (1.273142,0.500046);
\draw[->] (1.8,1.35) -- (2.622544,0.781298);
\draw[->] (1.8,1.35) -- (0.897620,1.780942);
\draw[->] (1.8,1.35) -- (2.263786,2.235947);
\draw[fill=black] (0,0) circle (1.5pt);
\draw[fill=black] (1,0) circle (1.5pt);
\draw[fill=black] (2,0) circle (1.5pt);
\draw[fill=black] (3,0) circle (1.5pt);
\draw[fill=black] (0.5,0.866025) circle (1.5pt);
\draw[fill=black] (1.5,0.866025) circle (1.5pt);
\draw[fill=black] (2.5,0.866025) circle (1.5pt);
\draw[fill=black] (3.5,0.866025) circle (1.5pt);
\draw[fill=black] (0,1.732050) circle (1.5pt);
\draw[fill=black] (1,1.732050) circle (1.5pt);
\draw[fill=black] (2,1.732050) circle (1.5pt);
\draw[fill=black] (3,1.732050) circle (1.5pt);
\draw[fill=black] (0.5,2.598076) circle (1.5pt);
\draw[fill=black] (1.5,2.598076) circle (1.5pt);
\draw[fill=black] (2.5,2.598076) circle (1.5pt);
\draw[fill=black] (3.5,2.598076) circle (1.5pt);
\draw[fill=black] (1.8,1.35) circle (0.75pt);
\draw (1.8,1.35) circle (1);
\end{tikzpicture}
\caption{Proof of Proposition~\ref{prop:compare}.}
\label{fig:boundary}
\end{figure}
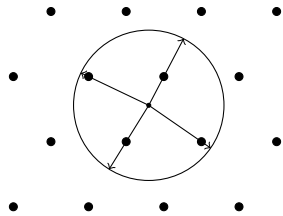

\begin{proof}[Proof of Proposition~\ref{prop:compare}]
See Figure~\ref{fig:boundary}. Suppose we have a packing of unit
spheres in $\RR^n$ with density $\Delta$. Let $S_R^{n-1}$ be a sphere
in $\RR^n$ of radius $R \leq 2$ (to be chosen later), located so that
it contains at least $\Delta R^n$ of the centers of the spheres in the
packing but its center is not one of them.  Such a location always
exists, by the same averaging argument as above (a randomly chosen location will
contain an average of $\Delta R^n$ sphere centers). Now, project the
sphere centers from the packing onto the surface of $S_R^{n-1}$ using
rays starting from the center of $S_R^{n-1}$. It follows from the lemma
below that the projections are separated by angles of at least
$\theta$, where $\sin(\theta/2) = 1/R$. Therefore, $\Delta R^n \leq
A(n, \theta)$, as desired, and we can achieve any angle of $\pi/3$
or more using $R \le 2$.
\end{proof}

Note that the proof breaks down if $R>2$, because two projected sphere
centers can even coincide.

\begin{lemma} \label{lem:angle}
Suppose $R \le 2$. If $XYZ$ is a triangle with $\abs{XY} \geq 2$,
$\abs{XZ} \leq R$, $\abs{YZ} \leq R$, then $\angle{XZY} \geq \theta$,
where $\sin(\theta/2) = 1/R$.
\end{lemma}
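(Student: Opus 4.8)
The plan is to reduce to the extremal configuration and then use elementary trigonometry. First I would observe that the angle $\angle XZY$ is smallest, for fixed side lengths $|XZ|$ and $|YZ|$, when the opposite side $|XY|$ is as small as possible, and it decreases when $|XZ|$ or $|YZ|$ increases. So the worst case is $|XY| = 2$, $|XZ| = |YZ| = R$. In that configuration the triangle is isosceles, and dropping the perpendicular from $Z$ to the midpoint $M$ of $XY$ gives a right triangle $XMZ$ with $|XM| = 1$, $|XZ| = R$, and half-angle $\angle XZM = (\angle XZY)/2$. Hence $\sin\big((\angle XZY)/2\big) = |XM|/|XZ| = 1/R$, which is exactly $\sin(\theta/2)$, so $\angle XZY = \theta$ in the extremal case and $\angle XZY \ge \theta$ in general.

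To make the monotonicity step rigorous I would invoke the law of cosines: writing $c = |XY|$, $a = |YZ|$, $b = |XZ|$, we have $\cos(\angle XZY) = (a^2 + b^2 - c^2)/(2ab)$. I want to show this is at most $\cos\theta = 1 - 2\sin^2(\theta/2) = 1 - 2/R^2$ under the hypotheses $c \ge 2$, $a \le R$, $b \le R$. Substituting the worst case $c = 2$ reduces the desired inequality to $(a^2 + b^2 - 4)/(2ab) \le 1 - 2/R^2$, i.e. to $R^2(a-b)^2 + (4 - a^2)(R^2 - b^2) \ge 0$ after clearing denominators and rearranging; both terms are nonnegative precisely because $a, b \le R \le 2$. (One has to check the sign carefully: $a^2 \le R^2 \le 4$ and $b^2 \le R^2$, so each factor is $\ge 0$.) Since $\cos$ is decreasing on $[0,\pi]$ and $\angle XZY \in [0,\pi]$, the inequality $\cos(\angle XZY) \le \cos\theta$ gives $\angle XZY \ge \theta$.

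The main obstacle — really the only subtlety — is making sure the inequality is applied in the correct direction and that the constraint $R \le 2$ is used where it is genuinely needed: it is what guarantees the triangle inequality permits $|XY| \ge 2$ at all (so the hypotheses are not vacuous), and it is exactly the condition under which the factor $4 - a^2$ is nonnegative in the algebraic identity above. An alternative, perhaps cleaner, route avoids coordinates entirely: place $X$ and $Y$ with $|XY| \ge 2$, note that $Z$ lies in the intersection of the two balls $B_R^n(X)$ and $B_R^n(Y)$, and observe that the angle $\angle XZY$ subtended at any point of that lens-shaped region is minimized at the two ``tips'' of the lens, where it equals $\theta$ by the isosceles-triangle computation; any interior point sees the segment $XY$ under a strictly larger angle. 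I would present the law-of-cosines version since it is the shortest to write out in full.
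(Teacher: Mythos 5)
Your overall plan matches the paper's: both proofs use the law of cosines, observe that $\cos(\angle XZY) = (a^2+b^2-c^2)/(2ab)$ is decreasing in $c$ and increasing in $a$ and $b$ on the relevant domain (the latter precisely because $R\le 2$), and conclude that the worst case is $(a,b,c)=(R,R,2)$, which gives exactly $\cos\theta$. The paper carries out the monotonicity step with partial derivatives; your isosceles-triangle identification of the extremal case and the lens picture are likewise both exactly the paper's Figure~\ref{fig:intersect}. So the approach is the right one.

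However, the algebraic step in your law-of-cosines version is incorrect. Clearing denominators in
\[
\frac{a^2+b^2-4}{2ab} \le 1-\frac{2}{R^2}
\]
gives
\[
4\bigl(R^2-ab\bigr) - R^2(a-b)^2 \ge 0,
\]
not the expression $R^2(a-b)^2 + (4-a^2)(R^2-b^2) \ge 0$ that you wrote. These are genuinely different polynomials: e.g.\ at $a=2$, $b=1.5$, $R=2$ the first equals $3$ while yours equals $1$, and at $a=1.8$, $b=1$, $R=2$ they are $6.24$ and $4.84$ respectively. Your expression is indeed always nonnegative (each summand visibly is), but since it is not equivalent to the inequality you need, establishing it proves nothing. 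This is a genuine gap as written.

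The gap is easily repaired in the spirit of your argument. Assuming without loss of generality that $a\ge b$, one has the correct identity
\[
4\bigl(R^2-ab\bigr) - R^2(a-b)^2 = 4\bigl(R^2-a^2\bigr) + (a-b)\bigl[\,4a - R^2(a-b)\,\bigr],
\]
and both terms are nonnegative: $a\le R$ gives $R^2-a^2\ge 0$, while $a-b\ge 0$ and $R^2(a-b)\le 4\cdot a$ (using $R\le 2$ and $a-b\le a$) give the second. Alternatively, one can simply argue monotonicity directly via $\partial_a\bigl[(a^2+b^2-c^2)/(2ab)\bigr] = (a^2-b^2+c^2)/(2a^2b) \ge 0$ since $b\le R\le 2\le c$, which is what the paper does and avoids hunting for a factorization.
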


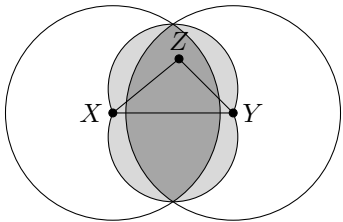
\begin{figure}
\centering
\tikzstyle{v} = [circle, draw, fill=black, inner sep=0pt, minimum width=3pt]
\begin{tikzpicture}[scale=0.8]
\path[use as bounding box] (-2,-2) rectangle (4,2);
\coordinate[label=left:$X$] (X) at (0,0) {};
\coordinate[label=right:$Y$] (Y) at (2,0) {};

\def\yyt{.4}
\pgfmathparse{sqrt(1+\yyt*\yyt)}\global\let\rri\pgfmathresult
\pgfmathparse{\rri+\yyt}\global\let\yyz\pgfmathresult
\pgfmathparse{sqrt(1+\yyz*\yyz)}\global\let\rrz\pgfmathresult

\begin{scope}
\clip (-3,0) rectangle (3,-3);
\draw[fill=gray!30] (1,-\yyt) circle (\rri);
\end{scope}

\begin{scope}
\clip (-3,0) rectangle (3,3);
\draw[fill=gray!30] (1,\yyt)  circle (\rri);
\end{scope}

\begin{scope}
\clip (X) circle (\rrz);
\path[fill=gray!70] (Y) circle (\rrz);
\end{scope}

\draw (X) circle (\rrz);
\draw (Y) circle (\rrz);
\coordinate[label=above:$Z$] (Z) at (1.1,0.9) {};

\node[v] at (X) {};
\node[v] at (Y) {};
\node[v] at (Z) {};
\draw (X) -- (Y) -- (Z) -- (X);
\end{tikzpicture}

\caption{Pictorial proof of Lemma~\ref{lem:angle}.  The bounds $\abs{XZ} \leq R$ and $\abs{YZ}
\leq R$ place $Z$ in the dark gray region, which is the intersection of
the two disks centered at $X$ and $Y$ with radius $R \leq 2$. The light
gray region contains all points $P$ with $\angle XPY \geq \theta$.
Since the dark region is contained inside the light region, it follows
that $\angle XZY \geq \theta$.}
\label{fig:intersect}
\end{figure}

\begin{proof}
See Figure~\ref{fig:intersect} for a pictorial proof.
For an algebraic proof, let $x = \abs{XZ}$, $y =
\abs{YZ}$, $z = \abs{XY}$, and $\gamma = \angle XZY$. By the law of
cosines, $\cos\gamma = (x^2 + y^2 - z^2)/(2xy)$. By taking partial
derivatives, we see that the expression $(x^2 + y^2 - z^2)/(2xy)$ is
maximized in the domain $0 \leq x,y \leq R$ and $z \geq 2$ at $(x,y,z)
= (R,R,2)$. Therefore, $\cos\gamma \leq 1 - 2R^{-2} = 1 - 2
\sin^2(\theta/2) = \cos\theta$. It follows that $\gamma \geq \theta$.
\end{proof}

Inequalities~\eqref{eq:lift} and~\eqref{eq:prop-compare} can be stated a
little more naturally in terms of packing density on the sphere.  A spherical
code on $S^{n-1}$ with minimal angle $\theta$ and size $A(n,\theta)$
corresponds to a packing with spherical caps of angular radius $\theta/2$
with density
\begin{equation} \label{eq:spheredensity}
A(n,\theta) \frac{\int_0^{\theta/2} \sin^{n-2} x \, dx}{\int_0^{\pi} \sin^{n-2} x \, dx}.
\end{equation}
In other words, it covers this fraction of the sphere.  Now if we let
$\Delta_{S^{n-1}}(\theta)$ denote the optimal packing density, then
\eqref{eq:prop-compare} implies
\begin{equation}
  \label{eq:euclidean-spherical}
\frac{1}{n} \log \Delta_{\RR^n} \lesssim \frac{1}{n} \log \Delta_{S^{n-1}}(\theta),
\end{equation}
where $f(n) \lesssim g(n)$ means $f(n) \le h(n)$ for some function $h$ with
$h(n) \sim g(n)$ (i.e., $\lim_{n \to \infty} h(n)/g(n) = 1$). This simply
amounts to verifying that
\[
\frac{1}{n} \log \frac{\int_0^{\theta/2} \sin^{n-2} x \, dx}{\int_0^{\pi} \sin^{n-2} x \, dx}
\sim \log \sin \frac\theta2
\]
for fixed $\theta$ satisfying $0 < \theta \le \pi$. Furthermore, it is known
that
\begin{equation}
  \label{eq:sphere-monotone}
\frac{1}{n} \log \Delta_{S^{n-1}}(\theta) \lesssim \frac{1}{n+1} \log \Delta_{S^{n}}(\phi)
\end{equation}
for $0 < \theta < \phi \le \pi/2$ (see (17) in \cite{L75}).  Thus, the exponential
rate of the packing density for spherical caps is weakly increasing as
a function of angle, and Euclidean space naturally occurs as the zero
angle limit.

The proof of the Kabatiansky-Levenshtein bound \eqref{eq:KL} on
$\Delta_{\RR^n}$ uses the following bound on $A(n, \theta)$ for $0 <
\theta < \pi/2$, which is derived using the linear programming bound
for spherical codes (see Theorem~4 in \cite{KL78}):
\begin{equation}
\label{eq:spherical-code-bound}
\frac{1}{n} \log A(n,\theta)
\lesssim \frac{1+\sin\theta}{2\sin\theta} \log \frac{1+\sin\theta}{2\sin\theta}
- \frac{1 - \sin\theta}{2\sin\theta} \log \frac{1 - \sin\theta}{2 \sin\theta}.
\end{equation}
The bound~\eqref{eq:KL} is then deduced by setting
\eqref{eq:spherical-code-bound} into \eqref{eq:lift} and choosing
$\theta$ to minimize the resulting bound,\footnote{Let us clarify a potentially
confusing point. The fact that $\theta = 1.0995\ldots$ minimizes the
bound may, at first, seem to be at odds with
\eqref{eq:euclidean-spherical} and \eqref{eq:sphere-monotone}, where we
said that the exponential rate of the packing density
$\Delta_{S^{n-1}}(\theta)$ is weakly increasing in $\theta$. Both
statements are correct. The bound in \eqref{eq:spherical-code-bound} is
a preliminary bound on $A(n,\theta)$, which can be improved for
$\theta$ less than the critical value $1.0995\ldots$ by
incorporating \eqref{eq:sphere-monotone}.  This improvement yields the same
bound on $\Delta_{\RR^n}$ for all $\theta \le 1.0995\ldots$.} which turns out to happen at
$\theta = 1.0995\ldots \approx 0.35\pi$. If we now apply our new inequality
\eqref{eq:prop-compare} in place of \eqref{eq:lift}, then we obtain an
improvement in the bound by a factor of $\mathcal{A}_{n+1}/\mathcal{A}_n$, where $\mathcal{A}_n = (1.2635\ldots+o(1))^n$
is the Kabatiansky-Levenshtein bound on $A(n,1.0995\ldots)$.
Thus, we obtain an improved sphere packing bound by a factor of $1.2635\ldots$
on average, in the sense that the geometric mean of the improvement
factors over all dimensions from $1$ to $N$ tends to $1.2635\ldots$ as $N \to \infty$.

\section{Linear programming bounds} \label{sec:LP}

In \cite{KL78} the upper bound on the maximum sphere packing density
$\Delta_{\RR^n}$ was derived by first giving an upper bound for the
maximum size $A(n, \theta)$ of a spherical code using linear
programming, and then using \eqref{eq:lift} to compare the two
quantities. We refer to this method as the Kabatiansky-Levenshtein
approach. Cohn and Elkies \cite{CE03} took a more direct approach to
bounding $\Delta_{\RR^n}$, by setting up a different linear program. In
this section, we show that the Cohn-Elkies linear program can always
prove at least as strong a bound on $\Delta_{\RR^n}$ as the
Kabatiansky-Levenshtein approach.

This theorem is the continuous analogue of a theorem of Rodemich
\cite{R80} in coding theory (see Theorem~3.5 of \cite{D94} for a proof
of Rodemich's theorem, since Rodemich published only an abstract). Let
$A(n,d)$ denote the maximum size of a binary error-correcting code of
block length $n$ and minimal Hamming distance $d$ (i.e., a subset of
$\set{0,1}^n$ with every two elements differing in at least $d$
positions), and let $A(n,d,w)$ denote the maximum size of such a code
with constant weight $w$ (i.e., every element of the subset has exactly
$w$ ones).  The current best bounds on $A(n,d)$ and $A(n,d,w)$ for
large $n$ are by McEliece, Rodemich, Rumsey, and Welch \cite{MRRW},
using linear programming bounds. As in the Kabatiansky-Levenshtein
approach, some of the best bounds on $A(n,d)$ were obtained using
bounds on $A(n,d,w)$ along with an analogue of
Proposition~\ref{prop:compare} known as the Bassalygo-Elias
inequality~\cite{Bas65}:
\begin{equation}
\label{eq:BE}
A(n,d) \leq \frac{2^n}{\binom{n}{w}} A(n,d,w).
\end{equation}
The proof of \eqref{eq:BE} is by an easy averaging argument. In analogy
with sphere packing, error-correcting codes play the role of sphere
packings while constant weight codes play the role of spherical codes.
Rodemich proved that any upper bound on $A(n,d)$ obtained using the
linear programming bound for $A(n,d,w)$ combined with \eqref{eq:BE} can
be obtained directly via the linear programming bound for $A(n,d)$.
Theorem~\ref{thm:LP-match} below is the continuous analogue of
Rodemich's theorem.

\subsection{LP bounds for spherical codes}

We begin by reviewing the linear programming bounds for spherical
codes.  We follow the approach of Kabatiansky and Levenshtein
\cite{KL78}, based on their inequality on the mean.

Let $S^{n-1}$ denote the unit sphere in $\RR^n$.  A function $f \colon
[-1,1] \to \RR$ is \emph{positive definite} if for all $N$ and all
$x_1,\dots,x_N \in S^{n-1}$, the matrix $\big(f(\ang{x_i,x_j})\big)_{1
\le i,j \le N}$ is positive semidefinite.  (Note that this property
depends on the choice of $n$; when necessary for clarity, we will say
such a function is \emph{positive definite on $S^{n-1}$}.)
Equivalently, for all $x_1,\dots,x_N \in S^{n-1}$ and $t_1,\dots,t_N
\in \RR$,
\[
 \sum_{1 \leq i, j \leq N} t_i t_j f(\ang{x_i,x_j}) \geq 0.
\]
A result of Schoenberg~\cite{Sch42} characterizes continuous
positive-definite functions as the nonnegative linear combinations of
the Gegenbauer polynomials $C_k^{n/2-1}$ for $k = 0, 1, 2, \dots$.
Recall that the polynomials $C_k^\alpha$ are orthogonal with respect to
the measure $(1-t^2)^{\alpha-1/2} \, dt$ on $[-1,1]$.  When $\alpha=n/2-1$,
this measure arises naturally (up to scaling) as the orthogonal projection of the
surface measure from $S^{n-1}$ onto a coordinate axis.

Given a positive-definite function $g$, define $\os{g}$ to be its
average
\[
\os{g} = \frac{\int_{-1}^1 g(t) (1-t^2)^{(n-3)/2} \, dt}{\int_{-1}^1 (1-t^2)^{(n-3)/2} \, dt}
\]
with respect to this measure. Equivalently, $\os{g}$ is the expectation
of $g(\ang{x, y})$ with $x$ and $y$ chosen independently and uniformly
at random from $S^{n-1}$. If
\[
g(t) = \sum_{k \ge 0} c_k C_k^{n/2-1}(t),
\]
then $\os{g} = c_0$.

\begin{theorem}[Delsarte-Goethals-Seidel \cite{DGS77}, Kabatiansky-Levenshtein \cite{KL78}] \label{thm:DGS}
If $g \colon [-1,1] \to \RR$ is continuous and positive definite on
$S^{n-1}$, $g(t) \le 0$ for all $t \in [-1,\cos \theta]$, and $\os{g} >
0$, then
\[
A(n, \theta) \leq \frac{g(1)}{\os g}.
\]
\end{theorem}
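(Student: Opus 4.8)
The plan is to run the classical linear programming argument, playing an upper and a lower bound for the double sum $\sum_{i,j} g(\ang{x_i,x_j})$ against each other, where $x_1,\dots,x_N$ is a spherical code on $S^{n-1}$ with minimum angle $\theta$ of maximal size $N = A(n,\theta)$ (such a code is finite, so this is legitimate).

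First I would set $\Sigma = \sum_{1 \le i,j \le N} g(\ang{x_i,x_j})$ and bound it from above. Splitting off the diagonal, the $N$ terms with $i = j$ each equal $g(1)$, since $\ang{x_i,x_i} = 1$. For $i \ne j$ we have $\ang{x_i,x_j} \le \cos\theta$ by the defining property of the code, so $\ang{x_i,x_j} \in [-1,\cos\theta]$ and hence $g(\ang{x_i,x_j}) \le 0$ by hypothesis. Therefore $\Sigma \le N g(1)$.

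Next I would bound $\Sigma$ from below using the structure theory of positive-definite functions on the sphere. By Schoenberg's theorem, continuity and positive definiteness of $g$ on $S^{n-1}$ let us write $g(t) = \sum_{k \ge 0} c_k C_k^{n/2-1}(t)$ with every $c_k \ge 0$, and with $c_0 = \os g$. Hence $\Sigma = \sum_{k \ge 0} c_k \sum_{i,j} C_k^{n/2-1}(\ang{x_i,x_j})$. Since $C_0^{n/2-1} \equiv 1$, the $k = 0$ term contributes $c_0 N^2 = \os g \cdot N^2$. For each $k \ge 1$, the inner sum $\sum_{i,j} C_k^{n/2-1}(\ang{x_i,x_j})$ is nonnegative, because each Gegenbauer polynomial $C_k^{n/2-1}$ is itself positive definite on $S^{n-1}$ (the addition formula for spherical harmonics exhibits $\big(C_k^{n/2-1}(\ang{x_i,x_j})\big)_{i,j}$ as a Gram matrix, up to a positive scalar); applying the definition of positive definiteness with $t_1 = \cdots = t_N = 1$ gives the claim. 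Since $c_k \ge 0$, discarding the terms with $k \ge 1$ yields $\Sigma \ge \os g \cdot N^2$.

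Combining the two estimates gives $\os g \cdot N^2 \le \Sigma \le N g(1)$, and dividing by $N \os g > 0$ gives $N \le g(1)/\os g$, i.e.\ $A(n,\theta) \le g(1)/\os g$. The one step requiring genuine input rather than bookkeeping is the lower bound: it is essential to use the full Gegenbauer expansion of $g$ (equivalently, the Kabatiansky--Levenshtein inequality on the mean $N^{-2}\Sigma \ge \os g$) and not merely the defining positive-semidefiniteness of $g$, which by itself would only give $\Sigma \ge 0$ and no bound on $N$. All the positivity facts needed there are supplied by Schoenberg's characterization of positive-definite functions on $S^{n-1}$.
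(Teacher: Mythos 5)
Your proof is correct, but it takes a genuinely different route from the paper's. Both arguments reduce to establishing the two inequalities $\Sigma := \sum_{i,j} g(\ang{x_i,x_j}) \le N g(1)$ (which you both prove identically, from the sign hypothesis on $g$) and the ``inequality on the mean'' $\Sigma \ge N^2 \os{g}$; the difference is entirely in how the lower bound is obtained. You obtain it by invoking Schoenberg's theorem to expand $g = \sum_k c_k C_k^{n/2-1}$ with $c_k \ge 0$ and $c_0 = \os{g}$, then using positive definiteness of each Gegenbauer polynomial to drop the $k\ge 1$ terms---this is the classical Delsarte-style route. The paper instead avoids Schoenberg's characterization altogether: it forms the signed measure $\nu = \sum_{x\in\mathcal{C}}\delta_x + \lambda\mu$ (with $\mu$ normalized surface measure) and uses only the raw definition of positive definiteness, via $\iint g(\ang{x,y})\,d\nu\,d\nu \ge 0$, which after choosing $\lambda = -N$ yields $\Sigma \ge N^2 \os{g}$ directly. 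The paper's route is more self-contained and, more importantly, it is chosen deliberately as a template that transfers verbatim to the Euclidean proof of Theorem~\ref{thm:CE} (where there is no finite-dimensional decomposition analogous to the Gegenbauer expansion to exploit in the same way). Your route is shorter if one takes Schoenberg as given, and makes explicit which structural input is doing the work; the paper's route buys uniformity across the spherical and Euclidean settings.
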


Let $A^{\rm LP}(n,\theta)$ denote the best upper bound on $A(n,\theta)$
that could be derived using Theorem~\ref{thm:DGS}. In other words, it
is the infimum of $g(1)/\os{g}$ over all valid auxiliary functions $g$.

We will give a proof of this theorem
following the approach of \cite{KL78}, as preparation for giving a new
proof of Theorem~\ref{thm:CE} below.

\begin{proof}
Let $\mathcal{C}$ be any spherical code in $S^{n-1}$ with minimal angle
at least $\theta$, let $\mu$ be the surface measure on $S^{n-1}$,
normalized to have total measure $1$, let $\delta_x$ be a delta
function at the point $x$, and let
\[
\nu = \sum_{x \in \mathcal{C}} \delta_{x} + \lambda \mu,
\]
where $\lambda$ is a constant to be determined.  We have
\[
\iint g(\ang{x,y}) \, d\nu(x) \, d\nu(y) \ge 0,
\]
because we can approximate the integral with a sum and use the positive
definiteness of $g$.  This inequality amounts to
\[
\lambda^2 \os{g} + 2 \lambda |\mathcal{C}| \os{g} + \sum_{x,y \in \mathcal{C}} g(\ang{x,y}) \ge 0.
\]
Because $\ang{x,y} \le \cos \theta$ for distinct points $x,y \in
\mathcal{C}$ and $g(t) \leq 0 $ for $t \in [-1, \cos \theta]$,
we have
\[
\sum_{x,y \in \mathcal{C}} g(\ang{x,y}) \le \sum_{x \in \mathcal{C}} g(\ang{x,x}) = |\mathcal{C}|g(1).
\]
Thus,
\[
\lambda^2 \os{g} + 2 \lambda |\mathcal{C}| \os{g} +  |\mathcal{C}|g(1) \ge 0.
\]

To derive the best bound on $|\mathcal{C}|$, we take $\lambda =
-|\mathcal{C}|$.  Then
\[
0 \le -|\mathcal{C}|^2 \os{g} + |\mathcal{C}| g(1)
\]
and hence
\[
|\mathcal{C}| \le\frac{g(1)}{\os g},
\]
as desired.
\end{proof}

\subsection{LP bounds in Euclidean space}

The Kabatiansky-Levenshtein approach gives the following bound on
$\Delta_{\RR^n}$. The original version uses \eqref{eq:lift}, but here
we state the improved version using Proposition~\ref{prop:compare}.

\begin{corollary} \label{cor:KL}
Suppose $g$ satisfies the hypotheses of Theorem~\ref{thm:DGS} with
$\pi/3 \le \theta \le \pi$. Then
\[
\Delta_{\RR^n} \leq \sin^n(\theta / 2) \frac{g(1)}{\ol g}.
\]
\end{corollary}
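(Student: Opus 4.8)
The plan is to chain together the two results already in hand. First I would apply Theorem~\ref{thm:DGS}: the hypotheses imposed on $g$ (continuity, positive definiteness on $S^{n-1}$, $g \le 0$ on $[-1,\cos\theta]$, and $\ol g > 0$) are precisely those required, so $A(n,\theta) \le g(1)/\ol g$. Next, since we are assuming $\pi/3 \le \theta \le \pi$, Proposition~\ref{prop:compare} applies and gives $\Delta_{\RR^n} \le \sin^n(\theta/2)\, A(n,\theta)$. Composing the two inequalities immediately yields
\[
\Delta_{\RR^n} \le \sin^n(\theta/2)\,\frac{g(1)}{\ol g},
\]
which is exactly the assertion.

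There is no real obstacle here: all the substantive content lies in Proposition~\ref{prop:compare} and Theorem~\ref{thm:DGS}, and the corollary merely packages their composition in the form most convenient for the asymptotic optimization that follows. The one point deserving a second glance is the range of $\theta$. The restriction $\theta \ge \pi/3$ is inherited from Proposition~\ref{prop:compare}, where it guarantees $R = 1/\sin(\theta/2) \le 2$ so that the radial projection onto the sphere $S_R^{n-1}$ does not collapse distinct centers; this costs nothing in practice, since the value $\theta = 1.0995\ldots$ that optimizes the Kabatiansky-Levenshtein bound lies well above $\pi/3$.

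For comparison, using \eqref{eq:lift} in place of Proposition~\ref{prop:compare} would give the analogous statement with $A(n+1,\theta)$, and hence would require $g$ to be positive definite on $S^{n}$ rather than $S^{n-1}$; that is the original Kabatiansky-Levenshtein corollary, and the point of stating Corollary~\ref{cor:KL} with $A(n,\theta)$ is precisely to record the constant-factor improvement without altering anything else in the argument.
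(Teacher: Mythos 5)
Your proof is correct and is exactly what the paper intends: the corollary is stated without proof because it is the immediate composition of Theorem~\ref{thm:DGS} (giving $A(n,\theta) \le g(1)/\ol g$) with Proposition~\ref{prop:compare} (giving $\Delta_{\RR^n} \le \sin^n(\theta/2) A(n,\theta)$), and you also correctly trace the $\theta \ge \pi/3$ hypothesis back to its source in the projection argument.
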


Let us recall the Cohn-Elkies linear programming bound.  Given an integrable
function $f \colon \RR^n \to \RR$, let $\wht f$ denote its Fourier
transform, normalized by
\[
\widehat{f}(t) = \int_{\RR^n} f(x) e^{2\pi i \langle x,t \rangle} \, dt.
\]
Let $B_R^n$ denote the $n$-dimensional ball with radius $R$. The volume
of the $n$-dimensional unit ball is $\vol(B_1^n) = \pi^{n/2}/(n/2)!$,
where $(n/2)! = \Gamma(n/2 + 1)$ for $n$ odd.

Much like the case of spheres, a function $f \colon \RR^n \to \RR$ is
\emph{positive definite} if for all $N$ and all $x_1,\dots,x_N \in
\RR^n$, the matrix $\big(f(x_i-x_j)\big)_{1 \le i,j \le N}$ is positive
semidefinite.  A result of Bochner~\cite{Boc33} characterizes
continuous positive-definite functions as the Fourier transforms of
finite Borel measures.  If $f$ and $\wht f$ are both integrable, then
$f$ is positive definite if and only if $\wht f$ is nonnegative
everywhere, by Fourier inversion and Bochner's theorem.

\begin{theorem}[Cohn-Elkies \cite{CE03}] \label{thm:CE}
Suppose $f \colon \RR^n \to \RR$ is continuous, positive definite, and
integrable, $f(x) \le 0$ for all $|x| \ge 2$, and $\wht f(0) > 0$. Then
\[
\Delta_{\RR^n} \leq \vol(B_1^n) \frac{f(0)}{\wht f(0)}.
\]
\end{theorem}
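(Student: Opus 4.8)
The plan is to run the measure-theoretic argument used for Theorem~\ref{thm:DGS} essentially verbatim, with the unit sphere $S^{n-1}$ and its uniform probability measure $\mu$ replaced by $\RR^n$ and Lebesgue measure restricted to a large ball, and with the average $\os{g}$ replaced by $\wht f(0) = \int_{\RR^n} f$ (the ``constant Fourier coefficient'' of $f$, which plays the role of $c_0$). Since $\Delta_{\RR^n}$ is the supremum of the upper densities of all packings of unit balls, it suffices to bound one arbitrary such packing, with center set $\mathcal C \subseteq \RR^n$, so that distinct points of $\mathcal C$ are at distance at least $2$. Fix a point $x_0$ and radii $R_k \to \infty$ attaining the limsup that defines the upper density $\bar\Delta$ of this packing, write $B = B_{R_k}^n(x_0)$ and $\mathcal C_k = \mathcal C \cap B$ and $N_k = \abs{\mathcal C_k}$; the centers lying within distance $1$ of the boundary of $B$ form an $O(R_k^{n-1})$-element shell, so $N_k \vol(B_1^n)/\vol(B) \to \bar\Delta$.

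Next, for a parameter $\lambda \in \RR$ to be optimized, consider the finite signed measure $\nu_k = \sum_{x\in\mathcal C_k}\delta_x + \lambda\,\mathbf 1_B(y)\,dy$. Exactly as in the proof of Theorem~\ref{thm:DGS} — approximating the integral by a Riemann sum over a fine net of $B$ and using positive definiteness of $f$, or equivalently using Bochner's theorem and $\wht f \geq 0$ to write $\iint f(x-y)\,d\nu_k(x)\,d\nu_k(y) = \int_{\RR^n}\wht f(t)\,\abs{\wht{\nu_k}(t)}^2\,dt$ — one has
\[
\sum_{x,y\in\mathcal C_k} f(x-y) \;+\; 2\lambda\sum_{x\in\mathcal C_k}\int_{B} f(x-y)\,dy \;+\; \lambda^2\int_{B}\int_{B} f(x-y)\,dx\,dy \;\ge\; 0 .
\]
The first sum is at most $N_k f(0)$: the diagonal contributes $N_k f(0)$, and every off-diagonal pair has $\abs{x-y}\ge 2$ and hence $f(x-y)\le 0$.

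For the remaining two terms I would use only that $f \in L^1$. If $x$ lies at distance at least $\varepsilon R_k$ from the boundary of $B$, then $\abs{\int_{B} f(x-y)\,dy - \wht f(0)} \le \int_{\abs z\ge\varepsilon R_k}\abs{f(z)}\,dz$, which tends to $0$; the centers within distance $\varepsilon R_k$ of the boundary number $O(\varepsilon R_k^n + R_k^{n-1})$ and each contributes at most $\int_{\RR^n}\abs f$ to the cross term, and the double integral is estimated the same way. Dividing the displayed inequality by $\vol(B)$, taking $\lambda = -N_k/\vol(B)$ (the minimizing choice, exactly as in the proof of Theorem~\ref{thm:DGS}), and letting $k \to \infty$ and then $\varepsilon \to 0$, I obtain $0 \le \rho f(0) - \rho^2 \wht f(0)$ where $\rho := \lim_k N_k/\vol(B) = \bar\Delta/\vol(B_1^n)$. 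Since $\wht f(0) > 0$ (and consequently $f(0) = \int_{\RR^n}\wht f > 0$), this gives $\rho \le f(0)/\wht f(0)$, that is, $\bar\Delta \le \vol(B_1^n)\,f(0)/\wht f(0)$; taking the supremum over all packings completes the proof.

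The main obstacle is precisely the boundary analysis in the third paragraph: one must check, using integrability of $f$ alone, that $\int_{B} f(x-y)\,dy$ is within $o(1)$ of $\wht f(0)$ for all but a vanishing fraction of the relevant centers, and that both the number of centers and the volume concentrated in the excluded boundary shell are $o(R_k^n)$. Everything else is the same bookkeeping as in the proof of Theorem~\ref{thm:DGS}. I note in passing that there is a slicker classical route: restrict to periodic packings (which suffice by approximation) and apply Poisson summation to $\sum_{w\in\Lambda}\sum_{i,j} f(w + v_i - v_j)$ for a packing built from a lattice $\Lambda$ and translation vectors $v_1,\dots,v_N$, so that the $0$ frequency contributes $N^2\wht f(0)/\vol(\RR^n/\Lambda)$ on the spectral side, the other frequencies are nonnegative because $\wht f \ge 0$, and the spatial side is at most $Nf(0)$ because off-diagonal distances are at least $2$. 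The measure-theoretic argument above, however, has the advantage of running exactly parallel to the proof of Theorem~\ref{thm:DGS}, which is the reason for presenting it here.
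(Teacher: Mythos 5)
Your argument is correct and is essentially the paper's own proof: the same signed measure $\nu$ combining delta masses at the packing centers with a multiple of Lebesgue measure on a large ball, the same use of positive definiteness to get $\iint f\,d\nu\,d\nu\ge 0$, the same choice $\lambda=-(\text{point density})$, and the same passage to the limit. The one technical difference is that the paper places the centers in a ball of radius $r$ but takes the Lebesgue measure on a slightly larger ball of radius $R=r+\sqrt{r}$, so that every per-center integral $\int_{|y|\le R} f(x-y)\,dy$ converges to $\wht f(0)$ uniformly with no $\varepsilon$-splitting needed, whereas you use a single ball for both and handle the boundary with an $\varepsilon$-shell estimate; both are fine, and the paper's cushion is marginally cleaner.
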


The original version in \cite{CE03} required suitable decay of $f$ and
$\wht f$ at infinity, and it was based on Poisson summation.  These
more restrictive hypotheses were removed in Section~9 of \cite{CK07}.
Here we give a more direct proof, although it has the disadvantage of
not telling as much about what happens when equality holds as the
Poisson summation proof does.

\begin{proof}
Without loss of generality, we can symmetrize to assume $f$ is an even
function (indeed, radially symmetric).  This is not necessary for the
proof, but it will simplify some of the expressions below.

Let $\mathcal{P}$ be a packing with balls of radius $1$, such that
$\mathcal{P}$ has density $\Delta_{\RR^n}$.  Given a radius $r>0$, let
$S_r$ be the set of sphere centers from $\mathcal{P}$ that lie within
the ball of radius $r$ about the origin, let $V_r$ be the volume of
that ball, and let $N_r = |S_r|$.  Then
\[
\lim_{r \to \infty} \vol(B_1^n) \frac{N_r}{V_r} = \Delta_{\RR^n}.
\]

Let $R = r + \sqrt{r}$ (in fact, $\sqrt{r}$ could be replaced with any
function that tends to infinity but is $o(r)$). Consider the signed
measure
\[
\nu = \sum_{x \in S_r} \delta_x + \lambda \mu_R,
\]
where $\delta_x$ is the delta function at $x$, $\mu_R$ is Lebesgue
measure on the ball of radius $R$ centered at the origin, and $\lambda$
is a constant to be determined. As in the proof of
Theorem~\ref{thm:DGS},
\[
\iint f(x-y) \, d\nu(x) \, d\nu(y) \ge 0,
\]
because $f$ is positive definite.  Equivalently,
\[
\lambda^2 \iint_{|x|,|y| \le R} f(x-y) \, dx \, dy
+ 2\lambda \sum_{x \in S_r} \int_{|y| \le R} f(x-y) \, dy
+ \sum_{x,y \in S_r} f(x-y) \ge 0.
\]
Because $f(x-y) \le 0$ whenever $x$ and $y$ are distinct points in the
packing,
\[
\lambda^2 \iint_{|x|,|y| \le R} f(x-y) \, dx \, dy
+ 2\lambda \sum_{x \in S_r} \int_{|y| \le R} f(x-y) \, dy
+ N_r f(0) \ge 0.
\]
Assuming $r$ is large enough that $N_r>0$, we set $\lambda = -N_r/V_r$ and divide by $N_r$ to obtain
\[
\frac{N_r}{V_r} \cdot \frac{1}{V_r} \iint_{|x|,|y| \le R} f(x-y) \, dx \, dy
- 2\cdot \frac{N_r}{V_r} \cdot \frac{1}{N_r} \sum_{x \in S_r} \int_{|y| \le R} f(x-y) \, dy
+ f(0) \ge 0.
\]

It is not hard to compute the limits
\[
\lim_{r \to \infty} \frac{1}{V_r} \iint_{|x|,|y| \le R} f(x-y) \, dx \, dy = {\wht f}(0)
\]
and
\[
\lim_{r \to \infty} \frac{1}{N_r}\sum_{x \in S_r} \int_{|y| \le R} f(x-y) \, dy = {\wht f}(0).
\]
Specifically, when $|x| \le r$, the $y$-integral covers all
values of $x-y$ up to radius $R-r = \sqrt{r}$.  As $r \to \infty$ these
$y$-integrals converge to $\wht f(0)$, and all but a negligible
fraction of the values of $x$ satisfying $|x| \le R$ also satisfy $|x|
\le r$.

Thus, in the limit as $r \to \infty$ we find that
\[
\frac{\Delta_{\RR^n}}{\vol(B_1^n)} {\wht f}(0)
- 2\frac{\Delta_{\RR^n}}{\vol(B_1^n)} {\wht f}(0)
+ f(0) \ge 0,
\]
which is equivalent to the desired inequality.
\end{proof}

Let $\Delta_{\RR^n}^{\rm LP}$ denote the optimal upper bound on
$\Delta_{\RR^n}$ using Theorem~\ref{thm:CE}. Recall that $A^{\rm LP}(n,
\theta)$ denotes the optimal upper bound on $A(n,\theta)$ obtained
using Theorem~\ref{thm:DGS}. Our next result compares the LP bound on
the sphere packing density $\Delta_{\RR^n}$ obtained from
Corollary~\ref{cor:KL} with the one from Theorem~\ref{thm:CE}.

\begin{theorem} \label{thm:LP-match}
For $\pi/3 \le \theta \le \pi$ and positive integers $n$,
\[
\Delta_{\RR^n}^{\rm LP} \leq \sin^n(\theta / 2) A^{\rm LP}(n, \theta).
\]
\end{theorem}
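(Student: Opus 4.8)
The plan is to take an arbitrary auxiliary function $g$ for Theorem~\ref{thm:DGS} (positive definite on $S^{n-1}$, nonpositive on $[-1,\cos\theta]$, with $\os g > 0$) and manufacture from it an auxiliary function $f \colon \RR^n \to \RR$ satisfying the hypotheses of Theorem~\ref{thm:CE}, in such a way that $\vol(B_1^n)\,f(0)/\wht f(0) \le \sin^n(\theta/2)\,g(1)/\os g$. Since $g$ is a nonnegative combination of Gegenbauer polynomials $C_k^{n/2-1}$ by Schoenberg's theorem, the natural candidate is to build $f$ as a radial function whose behavior on the sphere of radius $2$ reproduces $g$: concretely, set $f(x) = g\bigl(h(|x|)\bigr)$ for a suitable decreasing reparametrization $h$ with $h(2) = \cos\theta$, so that $f(x) \le 0$ exactly where $g \le 0$ pulls back, i.e. for $|x| \ge 2$. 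The scaling $\sin(\theta/2) = 1/R$ with $R = 2$ here is forced by the geometric argument of Proposition~\ref{prop:compare}; one expects the correct substitution to be essentially $t = 1 - |x|^2/2 = 1 - 2\sin^2(\theta/2)\,|x|^2/R^2$ type, i.e. the chord-length-to-inner-product map that sends the sphere $S^{n-1}(R)$ of radius $R$ into $[-1,1]$, composed with rescaling so that unit balls in $\RR^n$ correspond to the minimal-angle constraint.

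The key steps, in order, would be: (1) reduce to $g$ a single Gegenbauer polynomial $C_k^{n/2-1}$, or rather to finite nonnegative combinations thereof, using Schoenberg; (2) recall the classical fact (due to Schoenberg, and used throughout this circle of ideas) that the radial function $x \mapsto C_k^{n/2-1}\bigl(\cos(\pi |x|/?)\bigr)$ — more precisely the appropriate Bessel-type radial extension — is positive definite on $\RR^n$, so that $\wht f \ge 0$; the cleanest route is that positive-definite functions on $S^{n-1}$ for all $n$ simultaneously are exactly those arising as restrictions of positive-definite functions on Hilbert space, hence on every $\RR^n$; (3) verify the sign condition $f(x) \le 0$ for $|x| \ge 2$ by checking that the reparametrization $h$ maps $[2,\infty)$ into $[-1,\cos\theta]$ (with $h$ extended suitably beyond where the sphere picture applies — one may need to take $f$ to be a constant sign or truncate for $|x|$ large, which is harmless since $g \le 0$ there); (4) compute $f(0)$ and $\wht f(0)$ and compare: $f(0) = g(h(0))$ should relate to $g(1)$, and $\wht f(0) = \int_{\RR^n} f(x)\,dx$ should, after the change of variables to the sphere-parameter $t$, produce $\os g$ times a Jacobian factor that is exactly $\vol(B_1^n)^{-1}\sin^{-n}(\theta/2)$ up to the normalizations, yielding the claimed inequality.

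I expect the main obstacle to be step (2) together with getting step (4)'s constants to match on the nose. The positive-definiteness transfer is the delicate point: a general positive-definite $g$ on the single sphere $S^{n-1}$ need \emph{not} extend to a positive-definite radial function on $\RR^n$, so one cannot reparametrize naively — the honest statement is that the Gegenbauer polynomial $C_k^{n/2-1}$, viewed via the substitution $t = $ (something like $\Gamma$-normalized Bessel $J_{n/2-1}$), becomes a genuinely $\RR^n$-positive-definite radial function; equivalently one uses that $t \mapsto C_k^{n/2-1}(t)$ times the right weight has a nonnegative Hankel transform. So the real content is identifying the correct radial profile $f$ attached to $g$ — I would guess $f(x) = |x|^{?}\,(\text{stuff}) \cdot g(1 - |x|^2/2)$ or a Bessel-weighted version — and then the Fourier-positivity, the sign condition, and the constant $\sin^n(\theta/2)\vol(B_1^n)$ all fall out of the same change of variables. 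Once the dictionary $g \leftrightarrow f$ is pinned down, the inequality $\Delta_{\RR^n}^{\rm LP} \le \sin^n(\theta/2)\,A^{\rm LP}(n,\theta)$ follows by taking the infimum over $g$.
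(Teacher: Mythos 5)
Your overall strategy is right — given $g$ for Theorem~\ref{thm:DGS}, build an $f$ for Theorem~\ref{thm:CE} with matching ratio, then take infima — but the construction you sketch does not work, and you have correctly diagnosed but not resolved the obstruction. A pure reparametrization $f(x) = g(h(|x|))$ (or a Bessel-weighted variant applied termwise to the Gegenbauer expansion of $g$) cannot simultaneously preserve both properties you need. If you modify the radial profile so that $\wht f \ge 0$ — by replacing each $C_k^{n/2-1}$ with the Bessel-type function whose Hankel transform is a point mass — then you destroy the sign information on $[-1,\cos\theta]$: the sign pattern of the Bessel surrogate bears no controlled relation to where $g \le 0$, so $f(x) \le 0$ for $|x| \ge 2$ does not follow. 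Conversely, if you keep the sign pattern by reparametrizing $g$ directly, you lose positive definiteness, since (as you note) positive definiteness on a single $S^{n-1}$ does not give positive definiteness on $\RR^n$; the Hilbert-space/all-spheres characterization requires complete monotonicity, which is a strictly stronger hypothesis than the theorem allows. You never produce a candidate $f$ that satisfies both conditions at once, so the argument as written has a genuine gap at what you call steps (2) and (3).

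The paper's construction is qualitatively different from what you guessed and is worth internalizing: with $R = 1/\sin(\theta/2)$, one defines
\[
f(x-y) = \int_{B_R^n(x)\cap B_R^n(y)} g\!\left( \left\langle \tfrac{x-z}{|x-z|}, \tfrac{y-z}{|y-z|} \right\rangle \right) dz,
\]
i.e.\ $f$ averages $g(\cos\angle xzy)$ over all possible ``projection centers'' $z$, directly mimicking the geometric argument of Proposition~\ref{prop:compare}. This one formula handles everything at once. Positive definiteness follows because, for each fixed $z$, the kernel $(x,y)\mapsto g(\cos\angle xzy)$ is positive definite (it is $g$ applied to inner products of unit vectors), and the integral against $\chi_R(x-z)\chi_R(y-z)\,dz$ is a nonnegative average of such kernels with coefficients $t_i\chi_R(x_i-z)$ absorbed into the quadratic form. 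The sign condition follows from Lemma~\ref{lem:angle}: when $|x-y|\ge 2$ and $z\in B_R^n(x)\cap B_R^n(y)$, the angle $\angle xzy$ is at least $\theta$, so the integrand is $\le 0$ pointwise. Finally, the constants drop out exactly: $f(0) = \vol(B_R^n)\,g(1)$ and $\wht f(0) = \vol(B_R^n)^2\,\ol g$, giving $\vol(B_1^n)\,f(0)/\wht f(0) = R^{-n}\,g(1)/\ol g = \sin^n(\theta/2)\,g(1)/\ol g$. Your change-of-variables intuition for step (4) is close in spirit, but the needed change of variables lives in the extra integration variable $z$, not in a reparametrization of $|x|$.
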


To prove Theorem~\ref{thm:LP-match}, we will show that for any upper
bound on $\Delta_{\RR^n}$ obtained using a function $g$ in
Corollary~\ref{cor:KL}, we can always find a function $f$ that gives a
matching bound using Theorem~\ref{thm:CE}. In other words,
\[
\sin^n(\theta / 2)
  \frac{g(1)}{\ol g} =  \vol(B_1^n) \frac{f(0)}{\wht f(0)}.
\]

We have a similar conclusion for the original Kabatiansky-Levenshtein
bound using \eqref{eq:lift} without the $\theta \geq \pi/3$
assumption. See the remarks following the proof.

\begin{proof}[Proof of Theorem~\ref{thm:LP-match}]
Let $g$ be any function satisfying the hypotheses of
Theorem~\ref{thm:DGS}. The idea is to construct a function $f \colon
\RR^n \to \RR$ based on $g$ mimicking the geometric argument in the
proof of Proposition~\ref{prop:compare}. Let $R = 1/\sin(\theta/2)$, as
in that proof.

Consider the integral
\[
\int_{B_R^n(x) \cap B_R^n(y)}
g\paren{\ang{\frac{x-z}{\abs{x-z}}, \frac{y-z}{\abs{y-z}}}} \, dz,
\]
where $B_R^n(x)$ is the ball of radius $R$ centered at $x$.
Note that
\[
\ang{\frac{x-z}{\abs{x-z}}, \frac{y-z}{\abs{y-z}}} = \cos \angle xzy,
\]
where $\angle xzy$ denotes the angle at $z$ formed by $x$ and
$y$.  This angle is not defined if $x=z$ or $y=z$, but these
cases occur with measure zero.

The integral depends only on $|x-y|$, so there is a radial
function $f \colon \RR^n \to \RR$ satisfying
\[
f(x-y) = \int_{B_R^n(x) \cap B_R^n(y)}
g\paren{\ang{\frac{x-z}{\abs{x-z}}, \frac{y-z}{\abs{y-z}}}} \, dz.
\]
We claim that $f$ is positive definite. Indeed, let $\chi_R$ denote the
characteristic function of $B_R^n(0)$. Then we can rewrite $f$ as
\[
f(x-y)  =  \int_{\RR^n} \chi_R(x-z)\chi_R(y-z)
\,g\paren{\ang{\frac{x-z}{\abs{x-z}}, \frac{y-z}{\abs{y-z}}}} \, dz.
\]
For any $x_1, \dots, x_N \in \RR^n$ and $t_1, \dots, t_N \in \RR$, we
can expand
\[
\sum_{1 \leq i, j \leq N} t_it_j f(x_i-x_j)
\]
as
\[
\int_{\RR^n} \sum_{1 \leq i, j \leq N} (t_i \chi_R(x_i -z)) (t_j
\chi_R(x_j - z)) \, 
g\paren{\ang{\frac{x_i-z}{\abs{x_i-z}}, \frac{x_j-z}{\abs{x_j-z}}}} \,
dz.
\]
This expression is nonnegative, because $g$ is positive definite on the
unit sphere in $\RR^n$ and we can use $t_i\chi_R(x_i-z)$ as
coefficients. This shows that $f$ is positive definite on $\RR^n$.  It
is also integrable, because it has compact support (it vanishes past
radius $2R$).

If $\abs{x-y} \geq 2$, then by Lemma~\ref{lem:angle},
\[
\ang{\frac{x-z}{\abs{x-z}}, \frac{y-z}{\abs{y-z}}} \leq \cos \theta
\]
for all $z \in B_R^n(x) \cap B_R^n(y) \setminus\set{x,y}$. Since $g(t) \leq
0$ for all $t \in [-1, \cos\theta]$ by hypothesis, it follows that $f(x-y)
\leq 0$ whenever $\abs{x-y} \geq 2$. Thus, we have verified that $f$
satisfies all the hypotheses of Theorem~\ref{thm:CE} except
$\widehat{f}(0)>0$, which we will check shortly. We have $f(0) = \vol(B_R^n)
g(1)$ and
\begin{align*}
\wht f (0)
&= \int_{\RR^n} f(x-0) \, dx
\\&= \int_{\RR^n} \int_{\RR^n} \chi_R(x-z)\chi_R(-z)
\,g\paren{\ang{\frac{x-z}{\abs{x-z}}, \frac{-z}{\abs{-z}}}} \, dx \, dz
\\&= \int_{\RR^n} \int_{\RR^n} \chi_R(u)\chi_R(v)
\,g\paren{\ang{\frac{u}{\abs{u}}, \frac{v}{\abs{v}}}} \, du \,dv
\\&= \vol(B_R^n)^2 \ol g.
\end{align*}
Therefore $\widehat{f}(0)>0$ and
\[
\vol(B_1^n) \frac{f(0)}{\wht f(0)}
= \vol(B_1^n) \frac{\vol(B_R^n)}{\vol(B_R^n)^2} \frac{g(1)}{\ol g}
= \frac{1}{R^n} \frac{g(1)}{\ol g} = \sin^n(\theta/2) \frac{g(1)}{\ol g},
\]
as desired.
\end{proof}

When $\theta < \pi/3$ we can similarly match the Kabatiansky-Levenshtein
bound obtained using \eqref{eq:lift} by adapting the above proof for the
corresponding geometric argument. Let $\pi \colon B_1^n \to \{x \in S^n :
x_{n+1} \ge 0\}$ denote the map that orthogonally projects the unit disk in
the hyperplane $\RR^n \times \set{0}$ in $\RR^{n+1}$ to the upper half of the
unit sphere in $\RR^{n+1}$. For any $g$ in Theorem~\ref{thm:DGS} that gives a
bound for $A(n+1, \theta)$, let
\[
f(x-y) = \int_{B_R^n(x) \cap B_R^n(y)}
g\paren{\ang{\pi\paren{\frac{x-z}{R}},
\pi\paren{\frac{y-z}{R}}}} \, dz.
\]
A similar argument shows that $f$ is positive definite and $f(x) \leq 0$
whenever $\abs{x} \geq 2$. We have $f(0) = \vol(B_R^n) g(1)$ and $\wht f(0) =
\vol(B_R^n)^2 \EE[g(\ang{\pi(u), \pi(v)})]$, where $u$ and $v$ are
independent uniform random points in $B_1^n$.  The inequality on the mean
from \cite{KL78} says that the average of a positive-definite kernel with respect
to a probability distribution on its inputs must be at least as large as that
with respect to the uniform distribution.  Thus, $\EE[g(\ang{\pi(u), \pi(v)})] \ge
\ol g$ and
\[
\vol(B_1^n) \frac{f(0)}{\wht f(0)}
\le \sin^n(\theta/2) \frac{g(1)}{\ol g}.
\]
However, we cannot conclude that $\wht f(0) =
\vol(B_R^n)^2 \ol g$, so the version of this argument in
Theorem~\ref{thm:LP-match} is more elegant.

\section{Hyperbolic sphere packing}
\label{sec:hyperbolic}

Hyperbolic sphere packing is far more subtle than Euclidean sphere
packing.  In both hyperbolic and Euclidean spaces, one must deal with
the infinite volume of space available.  The Euclidean solution is
fairly straightforward: restrict to a large but bounded region, and
then let the size of this region tend to infinity.  The boundary
effects have negligible influence on the global density.  However,
these arguments become much trickier in hyperbolic space, since the
exponential volume growth means the limiting behavior is dominated by
what happens near the boundary. Troubling phenomena occur, such as
packings that have different densities when one uses regions centered
at different points. There are numerous other pathological examples
(see, for example, Section~1 of \cite{BR04}), and it is only recently
that a widely accepted definition of density has been proposed by Bowen
and Radin \cite{BR03,BR04}.  Before this definition, some density
bounds were proved using Voronoi cell arguments that would apply to any
reasonable definition of density, and indeed they apply to the
Bowen-Radin definition (see Proposition~3 in \cite{BR03}).

The best bound known is due to B\"or\"oczky \cite{B78}, who gave an
upper bound for the fraction of each Voronoi cell that could be covered
in a hyperbolic sphere packing.  The bound depends on the radius of the
spheres in the packing (the curvature of hyperbolic space sets a
distance scale, so density is no longer scaling-invariant, as it is in
Euclidean space). At least in sufficiently high dimensions, the B\"or\"oczky bound
is an increasing function of radius \cite{M99}, so it is never
better than the radius-zero limit.  In that limit it degenerates to the
Rogers bound \cite{Rog58}, which in dimension $n$ is asymptotic to $2^{-n/2} \cdot n/e$
as $n \to \infty$.

Here, we improve the density bound to the Kabatiansky-Levenshtein bound,
regardless of the radius. Let $\Delta_{\HH^n}(r)$ denote the optimal packing
density for balls of radius $r$ in $\HH^n$ (we will define this density
precisely in Section~\ref{subsec:bowen-radin}). We can bound the packing
density of balls in hyperbolic space by the packing density of spherical caps
on a sphere, as in the Euclidean setting discussed in
Section~\ref{sec:geometric}. The next result is analogous to
Proposition~\ref{prop:compare}.

\begin{theorem}
\label{thm:hyperbolic-compare} For all $n \geq 2$,  $\pi/3 \leq \theta \leq
\pi$,  and $r
> 0$, we have
\[
\Delta_{\HH^n}(r) \leq \sin^{n-1}(\theta/2) A(n,\theta).
\]
\end{theorem}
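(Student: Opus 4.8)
The plan is to transplant the proof of Proposition~\ref{prop:compare} into $\HH^n$, with the Euclidean averaging over translations replaced by the Bowen--Radin ergodic framework. Fix $r>0$ and $\theta\in[\pi/3,\pi]$. First I would choose the radius $\rho>0$ of the auxiliary hyperbolic sphere by $\cosh 2r=\cosh^2\rho-\sinh^2\rho\cos\theta$, which (using $\cosh^2\rho-\sinh^2\rho=1$ and $\cosh 2r-1=2\sinh^2 r$) is equivalent to $\sin(\theta/2)=\sinh r/\sinh\rho$; one checks $r\le\rho$ and, crucially, $\rho<2r$ whenever $\theta\ge\pi/3$, since then $\sinh\rho\le 2\sinh r<\sinh 2r$. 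Next comes the hyperbolic analogue of Lemma~\ref{lem:angle}: if $Z,X,Y\in\HH^n$ satisfy $d(Z,X),d(Z,Y)\le\rho$ and $d(X,Y)\ge 2r$, then $\angle XZY\ge\theta$. By the hyperbolic law of cosines, $\cos\angle XZY=\bigl(\cosh d(Z,X)\cosh d(Z,Y)-\cosh d(X,Y)\bigr)/\bigl(\sinh d(Z,X)\sinh d(Z,Y)\bigr)$; this decreases in $d(X,Y)$, and since $\rho<2r$ forces $\lvert d(Z,X)-d(Z,Y)\rvert<2r$, the binding constraint is $d(X,Y)=2r$. Writing $a=d(Z,X)$, $b=d(Z,Y)$, the resulting function of $(a,b)$ on $(0,\rho]^2$ has partial derivatives of the sign of $\cosh a\cosh 2r-\cosh b\ge\cosh 2r-\cosh\rho>0$, hence is maximized at $a=b=\rho$, giving $\cos\angle XZY\le(\cosh^2\rho-\cosh 2r)/\sinh^2\rho=\cos\theta$.

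With this lemma in hand, the projection step mirrors Proposition~\ref{prop:compare}: for any packing by balls of radius $r$ and any point $Z$ that is not a ball center, send each center $X$ within distance $\rho$ of $Z$ to the unit vector $\exp_Z^{-1}(X)/\abs{\exp_Z^{-1}(X)}$ in the unit tangent sphere $S^{n-1}$ at $Z$. Distinct centers are at distance $\ge 2r$, so by the lemma any two projections subtend an angle $\ge\theta$ at the origin; hence the projections form a spherical code of minimum angle $\ge\theta$, and there are at most $A(n,\theta)$ of them. So the ball $B_\rho^n(Z)$ contains at most $A(n,\theta)$ centers.

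The density step is where hyperbolic space genuinely differs. I would invoke the Bowen--Radin framework recalled in Section~\ref{subsec:bowen-radin}: $\Delta_{\HH^n}(r)$ can be computed as the supremum, over isometry-invariant probability measures $m$ on the space of radius-$r$ packings, of $\vol(B_r^n)$ times the intensity $\iota(m)$ of the point process of ball centers. Fix such an $m$. Because the isometry group acts transitively, the intensity measure of the center process is a constant multiple of hyperbolic volume, so for a deterministic basepoint $o$ one has the exact identity $\EE_m\bigl[\#\{\text{centers in }B_\rho^n(o)\}\bigr]=\iota(m)\vol(B_\rho^n)$; moreover $o$ is $m$-almost surely not a center, since the centers form a volume-null set almost surely and the invariant function $p\mapsto m(p\text{ is a center})$ therefore integrates to $0$. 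By the previous paragraph the random variable inside the expectation is at most $A(n,\theta)$, so $\iota(m)\vol(B_\rho^n)\le A(n,\theta)$, whence $\vol(B_r^n)\iota(m)\le A(n,\theta)\vol(B_r^n)/\vol(B_\rho^n)$.

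Taking the supremum over $m$, it remains to check the purely one-variable inequality $\vol(B_r^n)/\vol(B_\rho^n)\le\sin^{n-1}(\theta/2)$. Using $\vol(B_t^n)=\vol(S^{n-1})\int_0^t\sinh^{n-1}s\,ds$ and $\sin(\theta/2)=\sinh r/\sinh\rho$, this says $t\mapsto\int_0^t\sinh^{n-1}s\,ds\,/\,\sinh^{n-1}t$ is nondecreasing, which follows from a short calculus argument. Combining this with the previous step yields $\Delta_{\HH^n}(r)\le\sin^{n-1}(\theta/2)A(n,\theta)$. I expect the main obstacle to be the density step: Euclidean space lets one simply average a fixed-size ball over translations, whereas the exponential volume growth of $\HH^n$ rules this out, so the counting bound must instead be pushed through the Bowen--Radin definition of density and the fact that a stationary point process on a homogeneous space has intensity measure proportional to volume. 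Pinning down that this ``density'' really is $\vol(B_r^n)$ times the center intensity, and that the sphere center may be taken deterministic, is the delicate part; the trigonometric monotonicity and the calculus lemma are routine by comparison.
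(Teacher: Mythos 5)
Your proposal is correct and follows essentially the same route as the paper's own proof: define the auxiliary radius by $\sin(\theta/2)=\sinh r/\sinh R$, prove the hyperbolic analogue of the angle lemma via the hyperbolic law of cosines and the same monotonicity calculation, project centers onto the unit tangent sphere, invoke the Bowen--Radin intensity-is-proportional-to-volume fact (the paper's Lemma~\ref{lemma:uniform}), and finish with the monotonicity of $t\mapsto\int_0^t\sinh^{n-1}s\,ds/\sinh^{n-1}t$ (the paper's Lemma~\ref{lem:sinh-ratio}). The only cosmetic difference is that you note $\rho<2r$ strictly while the paper is content with $R\le 2r$; both are valid, and the organization of the argument is otherwise identical.
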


More precisely, one could replace $\sin^{n-1}(\theta/2)$ with the
hyperbolic volume ratio $\vol(B_r^n)/\vol(B_R^n)$, where $R$ is defined
by $\sinh R = (\sinh r)/\sin(\theta/2)$.  That would slightly improve
the inequality without changing the proof, at the cost of making the
statement more cumbersome.

As in the Euclidean case \eqref{eq:euclidean-spherical}, this theorem
implies that
\begin{equation}
\label{eq:hyperbolic-spherical}
\sup_{r > 0} \frac{1}{n} \log \Delta_{\HH^n} (r) \lesssim \frac{1}{n} \log \Delta_{S^{n-1}}(\theta).
\end{equation}
By using the Kabatiansky-Levenshtein bound on $\Delta_{S^{n-1}}$, i.e.,
\eqref{eq:spherical-code-bound} with $\theta \approx 0.35\pi$, we obtain the
following new bound on $\Delta_{\HH^n}(r)$.  It is an exponential improvement
over the B\"or\"oczky bound, which was previously the best bound known, and the
new bound is independent of the radius of the balls used in the packing.

\begin{corollary}
\label{cor:hyperbolic-KL} We have
\[
\sup_{r > 0} \Delta_{\HH^n}(r) \leq 2^{-(0.5990\ldots + o(1))n}.
\]
\end{corollary}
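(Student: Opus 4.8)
The plan is to combine Theorem~\ref{thm:hyperbolic-compare} with the Kabatiansky-Levenshtein linear programming bound on spherical codes, exactly in the spirit of how bound \eqref{eq:KL} is deduced from \eqref{eq:lift} and \eqref{eq:spherical-code-bound} in Section~\ref{sec:geometric}. First I would fix $\theta = 1.0995\ldots \approx 0.35\pi$, the value that minimizes the resulting Euclidean bound, and note that $\theta$ lies comfortably in the range $\pi/3 \le \theta \le \pi$ required by Theorem~\ref{thm:hyperbolic-compare}. For this fixed $\theta$ and every $n \ge 2$ and every $r > 0$, Theorem~\ref{thm:hyperbolic-compare} gives
\[
\Delta_{\HH^n}(r) \le \sin^{n-1}(\theta/2)\, A(n,\theta).
\]
Since the right-hand side does not depend on $r$, we may take the supremum over $r > 0$ on the left without changing anything:
\[
\sup_{r>0} \Delta_{\HH^n}(r) \le \sin^{n-1}(\theta/2)\, A(n,\theta).
\]

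Next I would take logarithms, divide by $n$, and let $n \to \infty$. The factor $\sin^{n-1}(\theta/2)$ contributes $\frac{n-1}{n}\log\sin(\theta/2) \to \log\sin(\theta/2)$, and the term $\frac1n \log A(n,\theta)$ is controlled by \eqref{eq:spherical-code-bound}. Adding the two exponential rates, one gets
\[
\limsup_{n\to\infty} \frac1n \log\!\Big(\sup_{r>0}\Delta_{\HH^n}(r)\Big)
\le \log\sin\tfrac\theta2
+ \frac{1+\sin\theta}{2\sin\theta}\log\frac{1+\sin\theta}{2\sin\theta}
- \frac{1-\sin\theta}{2\sin\theta}\log\frac{1-\sin\theta}{2\sin\theta}.
\]
This is precisely the quantity that, in the Euclidean case, equals $-(0.5990\ldots)\log 2$ at $\theta = 1.0995\ldots$; indeed the only difference from the Euclidean derivation is that here the geometric factor is $\sin^{n-1}$ rather than $\sin^n$, which is asymptotically irrelevant per dimension. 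Since this is the same optimization problem already solved in \cite{KL78} (and recalled above), the value at $\theta = 1.0995\ldots$ gives the claimed rate $2^{-(0.5990\ldots)n}$, and packaging the $o(1)$ error term yields exactly the stated inequality
\[
\sup_{r>0}\Delta_{\HH^n}(r) \le 2^{-(0.5990\ldots+o(1))n}.
\]

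The main obstacle is essentially bookkeeping rather than mathematics: one must be careful that the $o(1)$ in \eqref{eq:spherical-code-bound} is uniform in the relevant sense and that combining it with the $\frac{n-1}{n}$ versus $1$ discrepancy in the exponent of $\sin(\theta/2)$ genuinely leaves the leading constant $0.5990\ldots$ unchanged — which it does, since $\sin(\theta/2)$ is a fixed positive constant and $(n-1)/n \to 1$. One should also observe that choosing the single optimal $\theta$ for all large $n$ is legitimate here precisely because Theorem~\ref{thm:hyperbolic-compare} holds for each individual $\theta$ in the admissible range, so no further optimization over $\theta$ as a function of $n$ is needed; the footnote in Section~\ref{sec:geometric} explaining why $\theta = 1.0995\ldots$ is optimal applies verbatim. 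Thus the corollary follows immediately by inserting \eqref{eq:spherical-code-bound} into Theorem~\ref{thm:hyperbolic-compare}, exactly as \eqref{eq:KL} follows from \eqref{eq:lift}.
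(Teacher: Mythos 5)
Your proof is correct and is exactly the derivation the paper has in mind: the text immediately preceding the corollary states that it follows by inserting the Kabatiansky--Levenshtein bound \eqref{eq:spherical-code-bound} with $\theta \approx 0.35\pi$ into Theorem~\ref{thm:hyperbolic-compare}, which is what you do, including the observation that $\sin^{n-1}$ versus $\sin^{n}$ is asymptotically irrelevant.
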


\subsection{The Bowen-Radin theory of hyperbolic packings}
\label{subsec:bowen-radin}

The Bowen-Radin approach to hyperbolic packing is based on ergodic theory,
but our argument is elementary.  All we need is the following fact: for every
$R>0$, there exists a ball $\mathcal{B}$ of radius $R$ containing a subset of
at least
\[
\Delta_{\HH^n}(r) \frac{\vol(B^n_R)}{\vol(B^n_r)}
\]
points at distance $2r$ or more from each other and not equal to the center
of $\mathcal{B}$.  Naively, this should follow from a simple averaging
argument, since if we place $\mathcal{B}$ at random in a dense packing, then
this is the expected number of sphere centers it will contain, and the
probability that one of them will hit the center of $\mathcal{B}$ is zero.
Before turning to the proof of Theorem~\ref{thm:hyperbolic-compare}, we will
briefly explain the Bowen-Radin definition and why this fact is true.

In the Bowen-Radin theory, instead of focusing on individual packings one
studies measures on the space of packings.  Let $S_r$ be the space of
relatively dense packings of $\HH^n$ with balls of radius $r$ (i.e., packings
in which any additional such ball would intersect one from the packing).
Bowen and Radin give a natural metric to $S_r$, under which it is compact,
and they study the action of the isometry group $G$ of $\HH^n$ on $S_r$. They
define random packings by $G$-invariant Borel probability measures $\mu$ on
$S_r$, and they define the density of $\mu$ to be the probability that some
fixed origin is contained in one of the balls in the packing (by
$G$-invariance, it is independent of the choice of origin). The optimal
packing density $\Delta_{\HH^n}(r)$ is defined to be the least upper bound
for the density of such measures.

Although restricting attention to $G$-invariant measures may sound overly limiting,
it encompasses the reasonable examples that were known before.  For example,
if a packing is invariant under a discrete subgroup of $G$ with finite covolume,
then the Haar measure on $G$ descends to a probability distribution on the $G$-orbit
of the packing.  However, the space of measures is better behaved than the
space of discrete subgroups.

Bowen and Radin show that the optimal packing density
is achieved by some measure, and they show how to obtain well-behaved dense
sphere packings by sampling from such a distribution.  Their papers make a
convincing case that this ergodic approach is the right framework for studying
hyperbolic packing density.  See also \cite{R04} for intuition and
background.

The fact we need for Theorem~\ref{thm:hyperbolic-compare} is the following
lemma, which says that the sphere centers in a random packing are uniformly
distributed with point density $\delta/\vol(B^n_r)$:

\begin{lemma} \label{lemma:uniform}
Let $\mu$ be a $G$-invariant probability measure on $S_r$ with density
$\delta$. Then for every Borel set $A$ in $\HH^n$, the expected number of
sphere centers in $A$ for a $\mu$-random packing is
\[
\delta\frac{\vol(A)}{\vol(B^n_r)}.
\]
\end{lemma}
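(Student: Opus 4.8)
The plan is to recognize the quantity in question as the intensity measure of the point process of sphere centers under $\mu$, and then to pin that measure down using $G$-invariance. First I would re-express the density $\delta$ itself in the same language. Since any two sphere centers of a packing in $S_r$ lie at distance at least $2r$, at most one center can lie within distance $r$ of the origin $o$: two such centers would be within distance $2r$ of each other. Hence the event ``$o$ lies in some ball of the packing'' agrees, outside a $\mu$-null set, with the event ``exactly one sphere center lies in the closed ball $B^n_r(o)$'', so that
\[
\delta = \EE\big[\,\#\set{\text{sphere centers in } B^n_r(o)}\,\big],
\]
where $\EE$ is the expectation over a $\mu$-random packing.

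Next, for every Borel set $A \subseteq \HH^n$ I would define $\nu(A) = \EE\big[\#\set{\text{sphere centers in } A}\big]$. The counting functional $P \mapsto \#\set{\text{sphere centers of } P \text{ in } A}$ is Borel measurable on $S_r$ (for open $A$ it is lower semicontinuous in the Bowen-Radin topology, and the general case follows by a monotone class argument), so by the monotone convergence theorem $\nu$ is a Borel measure on $\HH^n$. It is locally finite, because a ball $B^n_\rho(x)$ contains at most $\vol(B^n_{\rho+r})/\vol(B^n_r)$ centers: the pairwise disjoint radius-$r$ balls around them all lie inside $B^n_{\rho+r}(x)$. And $\nu$ is $G$-invariant, since for $g \in G$
\[
\nu(gA) = \EE\big[\,\#\set{\text{sphere centers of } P \text{ in } gA}\,\big] = \EE\big[\,\#\set{\text{sphere centers of } g^{-1}P \text{ in } A}\,\big] = \nu(A),
\]
using that $g^{-1}P$ has the same law as $P$ by the $G$-invariance of $\mu$.

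To finish, I would invoke the fact that $\HH^n$ is the homogeneous space $G/K$ with $K$ the compact stabilizer of $o$, so that a locally finite $G$-invariant Borel measure on it is unique up to a positive scalar. Thus $\nu = c \cdot \vol$ for some $c \ge 0$, and taking $A = B^n_r(o)$ and comparing with the first displayed identity gives $c \,\vol(B^n_r) = \delta$, i.e. $c = \delta/\vol(B^n_r)$; hence $\nu(A) = \delta\,\vol(A)/\vol(B^n_r)$ for all Borel $A$, which is the lemma. The hard part is not any single computation but the soft measure theory: checking that the counting functional is measurable so that $\nu$ is genuinely a Borel measure, and verifying the hypotheses (local finiteness, handled by the packing constraint) under which the invariant measure on $\HH^n$ is unique. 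Everything else is bookkeeping.
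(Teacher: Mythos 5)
Your proof is correct and follows essentially the same route as the paper: define $\nu(A)$ as the expected number of centers in $A$, observe it is a locally finite $G$-invariant Borel measure on $\HH^n = G/K$ with $\nu(B^n_r) = \delta$, and conclude by uniqueness of the invariant measure on $G/K$. The paper states this tersely; you supply the supporting details (at most one center within distance $r$ of the origin, measurability of the counting functional, the local finiteness bound $\vol(B^n_{\rho+r})/\vol(B^n_r)$) but the argument is the same.
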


\begin{proof}
Let $\nu(A)$ be the expected number of sphere centers in a Borel set $A$.
Then $\nu$ is a $G$-invariant Borel measure on $\HH^n$, and the definition of
density can be reformulated as $\nu(B^n_r) = \delta$.  Thus, $\nu$ is locally
finite and therefore proportional to the hyperbolic volume measure.  (Recall
that Haar measure on $G/K$ is unique up to scaling, for any locally compact group
$G$ and compact subgroup $K$; see Chapter~III of \cite{N65}.)  The constant of
proportionality is determined by $\nu(B^n_r) = \delta$.
\end{proof}

\subsection{Proof of Theorem~\ref{thm:hyperbolic-compare}}

The proof of Theorem~\ref{thm:hyperbolic-compare} is analogous to the
Euclidean case. The heart of the proof is the following lemma.

\begin{lemma} \label{lem:hyp-proj}
Let $r \leq R \leq 2r$ and $\sin\frac\theta2 = \frac{\sinh r}{\sinh
R}$. In a packing of spheres of radius $r$ in $\HH^n$, every ball of
radius $R$ contains at most $A(n,\theta)$ sphere centers other than its
own center.
\end{lemma}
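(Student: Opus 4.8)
The plan is to mimic the proof of Lemma~\ref{lem:angle} and the projection argument from Proposition~\ref{prop:compare}, but carry it out in $\HH^n$ rather than $\RR^n$. Suppose we have a packing of balls of radius $r$ in $\HH^n$ and let $\mathcal{B}$ be a ball of radius $R$. Project the sphere centers lying in $\mathcal{B}$ radially onto the bounding sphere $\partial\mathcal{B}$ along geodesics emanating from the center $O$ of $\mathcal{B}$; since the center $O$ is not itself one of the sphere centers, this projection is well-defined. The sphere (of intrinsic radius $R$) around $O$ in $\HH^n$ is, with its induced metric, a round sphere, and the radial projection sends points to points on that round sphere, so it makes sense to talk about the angle subtended at $O$ by two projected points. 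Two distinct sphere centers $X,Y$ from the packing satisfy $d(X,Y)\ge 2r$, and both lie in $\mathcal{B}$ so $d(O,X)\le R$ and $d(O,Y)\le R$. If we can show the angle $\angle XOY \ge \theta$, then the projected points form a spherical code on $\partial\mathcal{B}\cong S^{n-1}$ with minimum angle $\theta$, giving at most $A(n,\theta)$ points, which is exactly the claim.

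So the heart of the matter is a hyperbolic analogue of Lemma~\ref{lem:angle}: if $XOY$ is a hyperbolic triangle with $d(X,Y)\ge 2r$, $d(O,X)\le R$, $d(O,Y)\le R$, and $r\le R\le 2r$, then the angle $\gamma=\angle XOY$ satisfies $\gamma\ge\theta$ where $\sin(\theta/2)=\sinh r/\sinh R$. First I would invoke the hyperbolic law of cosines,
\[
\cosh d(X,Y)=\cosh a\,\cosh b-\sinh a\,\sinh b\,\cos\gamma,
\]
where $a=d(O,X)$, $b=d(O,Y)$. Solving for $\cos\gamma$ gives
\[
\cos\gamma=\frac{\cosh a\,\cosh b-\cosh d(X,Y)}{\sinh a\,\sinh b}.
\]
I then want to show the right-hand side is maximized, over $0<a,b\le R$ and $d(X,Y)\ge 2r$, at $a=b=R$ and $d(X,Y)=2r$. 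Monotonicity in $d(X,Y)$ is immediate since $\cosh$ is increasing and it enters with a minus sign. For the $a,b$ dependence one computes the partial derivative in $a$ (with $b$ and $c:=d(X,Y)$ fixed): writing $F(a)=(\cosh a\cosh b-\cosh c)/(\sinh a\sinh b)$, the numerator of $F'(a)$ works out to $\cosh c-\cosh(a-b)$ up to the positive factor $1/(\sinh^2 a\,\sinh b)$ after using $\cosh a\cosh b-\sinh a\sinh b=\cosh(a-b)$... wait, more carefully: $\frac{\partial}{\partial a}[\cosh a\cosh b-\cosh c]=\sinh a\cosh b$ and $\frac{\partial}{\partial a}[\sinh a\sinh b]=\cosh a\sinh b$, so the sign of $F'(a)$ is that of $\sinh a\cosh b\cdot\sinh a\sinh b-(\cosh a\cosh b-\cosh c)\cosh a\sinh b=\sinh b[\sinh^2 a\cosh b-\cosh^2 a\cosh b+\cosh a\cosh c]=\sinh b[\cosh a\cosh c-\cosh b]$. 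Since $a\le R\le 2r$ and $c\ge 2r$, we have $\cosh a\le\cosh c$, hence $\cosh a\cosh c\ge\cosh^2 a\ge\cosh a\ge\cosh b$ when $\cosh a\ge 1\ge\cosh b/\cosh a$; in any case $\cosh a\cosh c-\cosh b\ge\cosh a\cdot\cosh c-\cosh R\ge 0$ provided $\cosh a\cosh c\ge\cosh b$, which holds since $\cosh a\ge 1$ and $c\ge 2r\ge R\ge b$ give $\cosh a\cosh c\ge\cosh c\ge\cosh R\ge\cosh b$. So $F$ is nondecreasing in $a$, and symmetrically in $b$, and the maximum is at $a=b=R$, $c=2r$. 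At that point, using $\cosh 2r=2\cosh^2 r-1=1+2\sinh^2 r$ and $\cosh^2 R=1+\sinh^2 R$,
\[
\cos\gamma\le\frac{\cosh^2 R-\cosh 2r}{\sinh^2 R}=\frac{(1+\sinh^2 R)-(1+2\sinh^2 r)}{\sinh^2 R}=1-\frac{2\sinh^2 r}{\sinh^2 R}=1-2\sin^2(\theta/2)=\cos\theta,
\]
so $\gamma\ge\theta$, proving the lemma. (This is where the hypothesis $R\le 2r$ is essential, exactly as $R\le 2$ was needed in Lemma~\ref{lem:angle}; without it the projected points can collide.)

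The step I expect to be the main obstacle is the monotonicity argument in $a$ and $b$: one must verify that the constraint $c\ge 2r$ together with $a\le R\le 2r$ really does force $\cosh a\cosh c-\cosh b\ge 0$ throughout the admissible region, and the inequality $\cosh a\cosh c\ge\cosh R\ge\cosh b$ I sketched above deserves to be written out cleanly (it uses $c\ge 2r\ge R$ and $\cosh a\ge 1$). One minor subtlety worth a sentence: I should confirm that the geodesic sphere of radius $R$ about $O$ in $\HH^n$, with its induced Riemannian metric, is a round sphere of radius $\sinh R$ (this is standard — in the ball or hyperboloid model the geodesic sphere is an honest metric sphere and the induced metric is $(\sinh R)^2$ times the standard metric on $S^{n-1}$), so that "spherical code with minimum angle $\theta$" on it is literally a spherical code in the sense of $A(n,\theta)$. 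With the lemma in hand, the theorem follows by combining the projection with Lemma~\ref{lem:hyp-proj}'s conclusion and the averaging fact recorded just before Section~\ref{subsec:bowen-radin} (or Lemma~\ref{lemma:uniform}), exactly as in the Euclidean case.
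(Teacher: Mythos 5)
Your proof is correct and follows essentially the same approach as the paper: project the sphere centers radially onto the boundary sphere of the ball of radius $R$, then prove the hyperbolic analogue of Lemma~\ref{lem:angle} (which is the paper's Lemma~\ref{lem:hyp-angle}) by maximizing $(\cosh a\cosh b-\cosh c)/(\sinh a\sinh b)$ over $0<a,b\le R\le 2r\le c$ via the same monotonicity computation. The paper writes the sign of $\partial f/\partial a$ directly as $(\cosh a\cosh c-\cosh b)/(\sinh^2 a\sinh b)$ and invokes $\cosh c\ge\cosh b$, $\cosh a\ge 1$, exactly as you do after your corrected computation.
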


\begin{proof}
We use the same projection argument as in the proof of
Proposition~\ref{prop:compare}. Project the sphere centers from the
packing onto the surface of the ball of radius $R$ using rays starting
from the center of the ball. By the next lemma, the projections are
separated by angles of at least $\theta$, so there can be at most
$A(n,\theta)$ of them.
\end{proof}

The next lemma is the hyperbolic analogue of Lemma~\ref{lem:angle}.

\begin{lemma} \label{lem:hyp-angle}
Consider a hyperbolic triangle with side lengths $a, b, c$ and the
angle opposite to $c$ having measure $\gamma$. If $0 <a, b \leq R \leq
2r \leq c$, then
\[
\sin \frac{\gamma}{2} \geq \frac{\sinh r}{\sinh R}.
\]
\end{lemma}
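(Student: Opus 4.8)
The plan is to reduce to the extremal configuration $a = b = R$, $c = 2r$ by a monotonicity argument, just as in the Euclidean Lemma~\ref{lem:angle}, and then verify the inequality directly there using the hyperbolic law of cosines. First I would recall the hyperbolic law of cosines for a triangle with sides $a,b,c$ and angle $\gamma$ opposite $c$:
\[
\cosh c = \cosh a \cosh b - \sinh a \sinh b \cos\gamma,
\]
so that
\[
\cos\gamma = \frac{\cosh a \cosh b - \cosh c}{\sinh a \sinh b}.
\]
I want to show that this quantity is maximized, over the region $0 < a,b \le R$ and $c \ge 2r$, at the corner $(a,b,c) = (R,R,2r)$. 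Monotonicity in $c$ is immediate: $\cosh c$ is increasing in $c$ and appears with a negative sign, so $\cos\gamma$ decreases as $c$ grows; hence we may take $c = 2r$. For the dependence on $a$ (and by symmetry $b$), I would fix $b$ and $c$ and differentiate $\varphi(a) = (\cosh a \cosh b - \cosh c)/(\sinh a \sinh b)$ with respect to $a$; the sign of $\varphi'(a)$ is the sign of $\sinh^2 a \cosh b - (\cosh a \cosh b - \cosh c)\cosh a = \cosh c \cosh a - \cosh b$, using $\sinh^2 a = \cosh^2 a - 1$. Since $c \ge 2r \ge R \ge a$ and $c \ge 2r \ge b$ forces $\cosh c \cosh a \ge \cosh a \cdot 1$... more carefully, $\cosh c \ge \cosh b$ and $\cosh a \ge 1$ give $\cosh c\cosh a \ge \cosh b$, so $\varphi'(a) \ge 0$ and $\varphi$ is increasing in $a$ on the relevant range; thus the maximum is at $a = R$, and likewise $b = R$.

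Having reduced to $(R,R,2r)$, the remaining task is the inequality
\[
\cos\gamma \le \frac{\cosh^2 R - \cosh 2r}{\sinh^2 R},
\]
and I must show the right-hand side equals $1 - 2(\sinh r/\sinh R)^2 = \cos\theta$ where $\sin(\theta/2) = \sinh r/\sinh R$. Writing $\cosh 2r = 1 + 2\sinh^2 r$ and $\cosh^2 R = 1 + \sinh^2 R$, the numerator becomes $\sinh^2 R - 2\sinh^2 r$, so the fraction is $1 - 2\sinh^2 r/\sinh^2 R = 1 - 2\sin^2(\theta/2) = \cos\theta$, as wanted. Since $\gamma$ ranges over $[0,\pi]$ and $\cos$ is decreasing there, $\cos\gamma \le \cos\theta$ gives $\gamma \ge \theta$, i.e. $\sin(\gamma/2) \ge \sin(\theta/2) = \sinh r/\sinh R$.

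The main obstacle I anticipate is not the algebra at the corner (which collapses cleanly via $\cosh 2r = 1 + 2\sinh^2 r$) but the monotonicity step: one must be careful that the triangle inequality $c \le a + b$ is compatible with the corner $(R,R,2r)$ — which holds precisely because $R \ge r$, i.e. $2R \ge 2r$ — and that the derivative sign computation genuinely uses $c \ge 2r$ rather than some stronger hypothesis. One should also note the degenerate endpoints ($a$ or $b$ equal to $0$) are excluded by hypothesis, matching the exclusion of $x=z,y=z$ in Lemma~\ref{lem:angle}. A pictorial proof in the spirit of Figure~\ref{fig:intersect} is also available: the intersection of two hyperbolic disks of radius $R$ whose centers are at distance $\ge 2r$ is contained in the lens-shaped region of points subtending angle $\ge \theta$ at the two centers, but the differentiation argument is cleaner to write.
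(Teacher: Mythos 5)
Your proof is correct and follows essentially the same route as the paper's: apply the hyperbolic law of cosines, show $\cos\gamma$ is decreasing in $c$ and increasing in $a$ and $b$ to reduce to the corner $(R,R,2r)$, then simplify with $\cosh 2r = 1 + 2\sinh^2 r$. The only cosmetic difference is that you introduce an intermediate angle $\theta$ with $\sin(\theta/2)=\sinh r/\sinh R$ and compare $\gamma$ to $\theta$, while the paper computes $\sin^2(\gamma/2)=(1-\cos\gamma)/2$ directly.
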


\begin{proof}
By hyperbolic law of cosines,
\[
\cos \gamma = \frac{\cosh a \cosh b - \cosh c}{\sinh a \sinh b}.
\]
Let
\[
f(a,b,c) = \frac{\cosh a \cosh b - \cosh c}{\sinh a \sinh b}.
\]
We wish to maximize $f(a,b,c)$ in the domain $0 < a,b \leq R \leq 2r
\leq c$. Since $f$ is monotonically decreasing in $c$, it is maximized
by setting $c = 2r$. We have
\[
\frac{\partial f}{\partial a} = \frac{\cosh a \cosh c - \cosh b}{\sinh^2 a \sinh b}
\]
which is nonnegative since $\cosh c \geq \cosh b$ and $\cosh a \geq 1$.
Thus $f(a,b,c)$ is nondecreasing in $a$, and it is maximized by setting
$a = R$. The same is true for $b$ by symmetry, and so
\[
\cos\gamma = f(a,b,c) \leq \frac{\cosh^2 R - \cosh 2r}{\sinh^2 R}
= 1 - \frac{2\sinh^2 r}{\sinh^2 R}.
\]
Therefore
\[
\sin^2\frac{\gamma}{2} = \frac{1 - \cos\gamma}{2} \geq \frac{\sinh^2
r}{\sinh^2 R},
\]
and the result follows.
\end{proof}

\begin{proof}[Proof of Theorem~\ref{thm:hyperbolic-compare}]
Define $R$ to satisfy $\sin\frac\theta2 = \frac{\sinh r}{\sinh R}$.
Since $\pi/3 \le \theta \le \pi$, we have $r \leq R \leq
2r$. (Note that the inequality $R \le 2r$ does not always hold when $\theta <
\pi/3$.  It fails in the limit as $r \to 0$ but holds for large
$r$.)

Let $\mu$ be a Bowen-Radin measure with density $\Delta_{\HH^n}(r)$, and let
$A$ be a ball of radius $R$ with its center omitted.  By
Lemma~\ref{lemma:uniform}, the expected number of sphere centers in $A$ from
a $\mu$-random packing is $\Delta_{\HH^n}(r)
\frac{\vol(B^n_R)}{\vol(B^n_r)}$, and thus there exists a packing in which
there are at least this many. By Lemma~\ref{lem:hyp-proj},
\[
\Delta_{\HH^n}(r) \le \frac{\vol(B^n_r)}{\vol(B^n_R)} A(n,\theta),
\]
and so all that remains is to bound $\vol(B^n_r)/\vol(B^n_R)$.
The volume of a ball in $\HH^n$ is given by
\[
\vol(B^n_r) = \Omega_n \int_{0}^r \sinh^{n-1} x \ dx,
\]
where $\Omega_n = 2\pi^{n/2}/\Gamma(n/2)$ is the surface volume of the
unit Euclidean $(n-1)$-sphere. Thus
\begin{equation}
\label{eq:hyp-bound}
\begin{aligned}
\Delta_{\HH^n}(r) &\le \frac{\int_0^r \sinh^{n-1} x \, dx}{\int_0^R \sinh^{n-1} x \, dx}
A(n,\theta)\\
&\leq \paren{\frac{\sinh r}{\sinh R}}^{n-1} A(n,\theta)\\
&= \sin^{n-1}(\theta/2) A(n,\theta),
\end{aligned}
\end{equation}
where the second inequality follows from Lemma~\ref{lem:sinh-ratio} below.
\end{proof}

If we fix the ratio $\frac{\sinh r}{\sinh R}$, then the ratio of the
integrals in \eqref{eq:hyp-bound} is almost determined by the following
lemma (the lower bound is sharp as $r \to 0$ and the upper bound is
sharp as $r \to \infty$).  We do not need the lower bound, but it shows
that \eqref{eq:hyperbolic-spherical} cannot be substantially improved
by a more careful analysis of the volume of hyperbolic balls.

\begin{lemma} \label{lem:sinh-ratio}
For $0 < r \le R$,
\[
\left(\frac{\sinh r}{\sinh R}\right)^n \le
\frac{\int_0^r \sinh^{n-1} x \, dx}{\int_0^R \sinh^{n-1} x \, dx}
\le \left(\frac{\sinh r}{\sinh R}\right)^{n-1}.
\]
\end{lemma}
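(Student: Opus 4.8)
The plan is to prove both inequalities by the same strategy: reduce the ratio of integrals to a pointwise comparison of the integrand $\sinh^{n-1}x$ against a suitable power of $\sinh x$, using the fact that $\sinh$ is increasing and log-convex on $(0,\infty)$. For the upper bound, I would write the claim as
\[
\sinh^{n-1} R \int_0^r \sinh^{n-1} x \, dx \le \sinh^{n-1} r \int_0^R \sinh^{n-1} x \, dx,
\]
and split the right-hand integral as $\int_0^r + \int_r^R$. The contribution of $\int_0^r$ already dominates $\sinh^{n-1}r \cdot \int_0^r / \sinh^{n-1}R$ times $\sinh^{n-1}R$ — wait, more cleanly: after dividing, the inequality $\frac{\int_0^r}{\int_0^R} \le \left(\frac{\sinh r}{\sinh R}\right)^{n-1}$ is equivalent to showing that the function $R \mapsto \left(\int_0^R \sinh^{n-1}x\,dx\right)\big/\sinh^{n-1}R$ is nondecreasing in $R$. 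Differentiating (quotient rule), this amounts to
\[
\sinh^{n-1}R \cdot \sinh^{n-1}R - \left(\int_0^R \sinh^{n-1}x\,dx\right)(n-1)\sinh^{n-2}R\cosh R \ge 0,
\]
i.e. $\sinh R \ge (n-1)\cosh R \cdot \frac{\int_0^R \sinh^{n-1}x\,dx}{\sinh^{n-1}R}$. Hmm, that direction looks backwards; instead I expect the clean route is to show $R \mapsto \sinh^{n-1}R \big/ \int_0^R \sinh^{n-1}x\,dx$ is nonincreasing, equivalently $\frac{d}{dR}\log\!\left(\int_0^R \sinh^{n-1}\right) \le \frac{d}{dR}\log(\sinh^{n-1}R) = (n-1)\coth R$. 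The left side is $\sinh^{n-1}R \big/ \int_0^R \sinh^{n-1}x\,dx$, so I must show $\int_0^R \sinh^{n-1}x\,dx \ge \frac{\sinh^n R}{(n-1)\cosh R}$; but integrating $\frac{d}{dx}(\sinh^{n-1}x)=(n-1)\sinh^{n-2}x\cosh x \le (n-1)\cosh R \cdot \sinh^{n-2}x$ is not quite it either, so I would instead use $\frac{d}{dx}\sinh^{n}x = n\sinh^{n-1}x\cosh x$ to get $\int_0^R \sinh^{n-1}x\cosh x\,dx = \sinh^n R/n$, then bound $\int_0^R\sinh^{n-1}x\,dx \ge \frac{1}{\cosh R}\int_0^R \sinh^{n-1}x\cosh x\,dx = \frac{\sinh^n R}{n\cosh R}$, which handles the factor $n$ rather than $n-1$; tightening the constant from $n$ to $n-1$ is exactly where I expect the work to be, and it should come from integrating by parts to exploit $\cosh^2 - \sinh^2 = 1$.

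For the lower bound I would run the mirror-image argument: show that $R \mapsto \left(\int_0^R \sinh^{n-1}x\,dx\right)\big/\sinh^n R$ is nonincreasing, so that comparing its value at $r$ and $R$ with $r \le R$ gives $\frac{\int_0^r}{\int_0^R} \ge \frac{\sinh^n r}{\sinh^n R}$. The derivative condition is $\sinh^{n-1}R \cdot \sinh^n R \le \left(\int_0^R \sinh^{n-1}x\,dx\right) n \sinh^{n-1}R\cosh R$, i.e. $\int_0^R \sinh^{n-1}x\,dx \ge \frac{\sinh^n R}{n\cosh R}$ — which is precisely the bound I established above via $\int_0^R \sinh^{n-1}x\cosh x\,dx = \sinh^n R / n$ and $\cosh x \le \cosh R$. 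So the lower bound is the "easy" half and needs only the constant $n$.

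The main obstacle, then, is sharpening the upper-bound estimate: a crude bound gives the exponent via the constant $n$ but the lemma asks for $n-1$. I expect the fix is to write $\sinh^{n-1}x = \sinh^{n-2}x \cdot \sinh x$ and integrate by parts, or equivalently to use the identity $\frac{d}{dx}\big(\sinh^{n-1}x\cosh x\big) = (n-1)\sinh^{n-2}x\cosh^2 x + \sinh^n x = (n-1)\sinh^{n-2}x + n\sinh^n x$, which upon integrating over $[0,R]$ yields $\sinh^{n-1}R\cosh R = (n-1)\int_0^R \sinh^{n-2}x\,dx + n\int_0^R \sinh^n x\,dx$ and a companion recursion; combining these with $\cosh R\int_0^R\sinh^{n-1}x\,dx \ge \int_0^R \sinh^{n-1}x\cosh x\,dx = \sinh^n R/n$ should pin down the $(n-1)$-exponent. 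A cleaner alternative I would try first: substitute and compare $\frac{d}{dR}$ of both sides of the desired inequality directly, reducing everything to the single elementary inequality $\sinh R\cosh R \ge R \cdot$ (something) — but I anticipate the integration-by-parts route is the safe one, and I would present that.
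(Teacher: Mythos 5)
Your overall strategy — reduce each inequality to a monotonicity claim about a ratio of the form $\left(\int_0^R \sinh^{n-1}\right)/\sinh^{k}R$ — is exactly the paper's approach, and your treatment of the lower bound is correct and in fact cleaner than the paper's: the single pointwise bound $\cosh x \le \cosh R$ applied to $\int_0^R \sinh^{n-1}x\,\cosh x\,dx = \sinh^n R/n$ immediately yields $\int_0^R \sinh^{n-1}x\,dx \ge \sinh^n R/(n\cosh R)$, which is all that half requires.

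The upper bound is where there is a real gap, and it comes from a sign reversal. Your first derivation was right: for $R \mapsto \left(\int_0^R\sinh^{n-1}\right)/\sinh^{n-1}R$ to be nondecreasing you need, after clearing denominators, $\sinh^n R \ge (n-1)\cosh R\int_0^R \sinh^{n-1}x\,dx$, i.e.\
\[
\int_0^R \sinh^{n-1}x\,dx \ \le\ \frac{\sinh^n R}{(n-1)\cosh R}.
\]
You then said this ``looks backwards'' and restated the condition as $\frac{d}{dR}\log\int_0^R\sinh^{n-1} \le \frac{d}{dR}\log\sinh^{n-1}R$, which is the reverse of the correct condition (monotone-increasing ratio means the numerator's log-derivative dominates, not the other way around). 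That reversal led you to the target $\int_0^R\sinh^{n-1}x\,dx \ge \sinh^n R/\bigl((n-1)\cosh R\bigr)$, which is actually false: near $R=0$ it reads $R^n/n \ge R^n/(n-1)$. Consequently the ``tightening the constant from $n$ to $n-1$'' program cannot work, since you would be tightening past the truth.

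Even with the sign corrected, the pointwise-comparison tricks you suggest do not reach the right constant. The crude $\cosh x \ge 1$ gives $\int_0^R\sinh^{n-1} \le \sinh^n R/n$, which is not enough, and the integration-by-parts recursion yields $(n-1)\int_0^R \sinh^{n-1}x\,dx \le \sinh^{n-2}R\cosh R$, which is strictly weaker than $\sinh^n R/\cosh R$ because $\cosh^2 R > \sinh^2 R$. The paper instead treats the inequality $\sinh^n R/\bigl((n-1)\cosh R\bigr) - \int_0^R\sinh^{n-1} \ge 0$ directly: both sides vanish at $R=0$, and differentiating and using $\cosh^2 R = 1 + \sinh^2 R$ collapses the derivative to $\sinh^{n-1}R/\bigl((n-1)\cosh^2 R\bigr) \ge 0$, which finishes it. Your proof would need this kind of second differentiation (or an equivalent device) for the upper half, rather than a single pointwise bound.
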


\begin{proof}
These inequalities amount to saying that
\[
\frac{\int_0^r \sinh^{n-1} x \, dx}{\sinh^{n-1} r}
\]
is an increasing function of $r$, while
\[
\frac{\int_0^r \sinh^{n-1} x \, dx}{\sinh^n r}
\]
is a decreasing function of $r$.

The derivative of the former function is
\[
1 - \frac{(n-1)\cosh r}{\sinh^n r} \int_0^r \sinh^{n-1} x \, dx,
\]
so we must prove that
\[
\frac{\sinh^n r}{(n-1) \cosh r} - \int_0^r \sinh^{n-1} x \, dx\ge 0.
\]
The left side of this inequality vanishes when $r=0$, and its
derivative with respect to $r$ is
\[
\frac{\sinh^{n-1} r}{(n-1) \cosh^2 r},
\]
so it is increasing and hence nonnegative.

To show that
\[
\frac{\int_0^r \sinh^{n-1} x \, dx}{\sinh^n r}
\]
is decreasing, note that its derivative is
\[
\frac{1}{\sinh r}- \frac{n \cosh r}{\sinh^{n+1} r}\int_0^r \sinh^{n-1} x \, dx,
\]
so we must prove that
\[
\int_0^r \sinh^{n-1} x \, dx - \frac{\sinh^n r}{n \cosh r}\ge 0.
\]
Again the left side vanishes when $r=0$, and this time its derivative
is
\[
\frac{\sinh^{n+1} r}{n \cosh^2 r},
\]
so it is increasing and hence nonnegative.  This completes the proof.
\end{proof}

\section{Linear programming bounds in hyperbolic space}
\label{sec:hypLP}

It is natural to try to extend the results of Section~\ref{sec:LP} on
linear programming bounds to hyperbolic space, but one runs into
technical difficulties.

Given a function $f \colon [0,\infty) \to \RR$, we view it as a
function of hyperbolic distance and define the corresponding kernel $f
\colon \HH^n \times \HH^n \to \RR$ by $f(x,y) = f(d(x,y))$, where $d$
denotes the metric on $\HH^n$.  (Using the same symbol for both
functions is an abuse of notation, but it is convenient not to have to
write the metric $d$ repeatedly, and the number of arguments makes it
unambiguous.)  We say $f$ is \emph{positive definite} if for all $N$ and
all $x_1,\dots,x_N \in \HH^n$, the matrix $\big(f(x_i,x_j)\big)_{1 \le
i,j \le N}$ is positive semidefinite, and we say it is
\emph{integrable} on $\HH^n$ if $x \mapsto f(x,y)$ is an integrable
function on $\HH^n$ (of course this is independent of $y$), in which
case we write $\int_{\HH^n} f$ for the integral.

Let $G$ be the connected component of the identity in the isometry
group of $\HH^n$, and let $K$ be the stabilizer within $G$ of a point
$e \in \HH^n$.  Then $(G,K)$ is a Gelfand pair; i.e., the algebra
$L^1(K\backslash G/K)$ of integrable, bi-$K$-invariant functions on $G$
forms a commutative algebra under convolution.  Here $G/K$ is $\HH^n$
and functions on $K \backslash G / K$ correspond to radial functions on
$\HH^n$.  See Chapters~8 and ~9 of \cite{W07} for an account of Gelfand
pairs and spherical transforms (and see \cite{T82} for a more concrete
exposition of Fourier analysis in $\HH^2$). In the setting of $\HH^n$,
this theory gives a well-behaved Fourier transform for radial
functions. For each $\lambda\ge0$, let $P_\lambda$ be the unique
radial eigenfunction of the Laplacian on $\HH^n$ with eigenvalue
$\lambda$ and $P_\lambda(0)=1$.  These functions are positive definite
for all $\lambda \ge 0$ (see Theorem~5.2 in \cite[p.~346]{T63}). Given
a function $f \colon [0,\infty) \to \RR$ that is integrable on $\HH^n$,
its radial Fourier transform is given by
\[
\widehat{f}(\lambda) = \int_{\HH^n} f(x,e) P_{\lambda}(x,e) \, dx,
\]
which is of course independent of $e \in \HH^n$.  As in the Euclidean
case, the Fourier transform extends to $L^2(K \backslash G / K)$, and
it yields an isomorphism from that space to $L^2([0,\infty),\mu_P)$,
where $\mu_P$ is the Plancherel measure. However, unlike the Euclidean
case, the Plancherel measure for $\HH^n$ is supported just on
$[(n-1)^2/4,\infty)$.

Positive-definite functions are characterized by the Bochner-Godement
theorem (see Theorems~9.3.4 and~9.4.1 in \cite{W07} or Theorem~12.10 in
Chapter~III of \cite{H08}). For continuous, integrable functions, it
says that $f$ is positive definite if and only
if $\widehat{f}$ is nonnegative on the support of the Plancherel
measure. However, $\widehat{f}$ can be negative outside of the support,
because $G$ is not amenable: Valette \cite{V98} has constructed a
continuous, positive-definite function with compact support and
negative integral. (His construction works in $G$, rather than $G/K$,
but it is easy to make it bi-$K$-invariant.)

In the linear programming bounds, we will assume $\widehat{f} \ge 0$
everywhere, which is a strictly stronger assumption than positive
definiteness.  We do not know whether the stronger hypothesis is truly
needed for the following conjecture, but it will be needed for the
proof of Theorem~\ref{thm:hypLP}.

\begin{conjecture} \label{conjecture:hypLP}
Let $f \colon [0,\infty) \to \RR$ be continuous and integrable on
$\HH^n$, and suppose $f(x) \le 0$ for all $x \ge 2r$ while
$\widehat{f}(\lambda) \ge0$ for all $\lambda>0$ and $\widehat{f}(0)>0$.
Then
\[
\Delta_{\HH^n}(r) \le \vol(B_r^n) \frac{f(0)}{\widehat{f}(0)}.
\]
\end{conjecture}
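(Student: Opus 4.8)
The plan is to mimic the proof of Theorem~\ref{thm:CE} as closely as possible, substituting the Bowen--Radin framework and the spherical transform on $\HH^n$ for Euclidean averaging and the ordinary Fourier transform. Let $\mu$ be a $G$-invariant probability measure on $S_r$ with density $\Delta_{\HH^n}(r)$, which exists by the Bowen--Radin theory. Fix a large radius $\rho$, and let $R = \rho + \sqrt{\rho}$ (or any auxiliary radius with $R - \rho \to \infty$ but $R - \rho = o(\rho)$, exactly as in Theorem~\ref{thm:CE}). For a $\mu$-random packing with center set $\mathcal{C}$, put $S_\rho = \mathcal{C} \cap B_\rho^n(e)$ and $N_\rho = |S_\rho|$, and form the signed measure $\nu = \sum_{x \in S_\rho} \delta_x + \lambda \mu_R$, where $\mu_R$ is hyperbolic volume measure on $B_R^n(e)$ and $\lambda$ is to be chosen. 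Positive definiteness of the kernel $f$ gives $\iint f(x,y)\, d\nu(x)\, d\nu(y) \ge 0$, i.e.
\[
\lambda^2 \iint_{B_R \times B_R} f(x,y)\, dx\, dy + 2\lambda \sum_{x \in S_\rho} \int_{B_R} f(x,y)\, dy + \sum_{x,y \in S_\rho} f(x,y) \ge 0.
\]
Since distinct sphere centers are at distance $\ge 2r$ and $f \le 0$ there, the last sum is at most $N_\rho f(0)$. Setting $\lambda = -N_\rho/\vol(B_R^n)$ and dividing by $N_\rho$ yields, after rearrangement, an inequality between $N_\rho/\vol(B_R^n)$, an average of $f$ over pairs, and $f(0)$, just as in Theorem~\ref{thm:CE}.

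The next step is to take expectations over $\mu$ and then let $\rho \to \infty$. By Lemma~\ref{lemma:uniform}, $\EE[N_\rho] = \Delta_{\HH^n}(r)\vol(B_\rho^n)/\vol(B_r^n)$, and since $\vol(B_\rho^n)/\vol(B_R^n) \to 1$ when $R - \rho \to \infty$ with $R - \rho = o(\rho)$ (using that $\vol(B_\rho^n) \sim c\, e^{(n-1)\rho}$ as $\rho \to \infty$, so the exponential volume growth is what forces the $o(\rho)$ condition rather than the $O(1)$ buffer that sufficed in $\RR^n$), we get $\EE[N_\rho]/\vol(B_R^n) \to \Delta_{\HH^n}(r)/\vol(B_r^n)$. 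The two ``cross terms'' should both converge to $\widehat{f}(0)$: the quantity $\frac{1}{\vol(B_R^n)}\iint_{B_R \times B_R} f(x,y)\, dx\, dy$ converges to $\int_{\HH^n} f$, and for $x$ with $d(x,e) \le \rho$ the inner integral $\int_{B_R^n(e)} f(x,y)\, dy$ covers all of $B_{R-\rho}^n(x)$ and hence tends to $\int_{\HH^n} f$ as well, while the fraction of $x \in S_\rho$ failing $d(x,e) \le \rho$ is negligible in expectation by Lemma~\ref{lemma:uniform}. Here is where the hypothesis $\widehat{f}(\lambda) \ge 0$ for all $\lambda$, not merely on the Plancherel support, enters: one wants $\int_{\HH^n} f = \widehat{f}(0)$, but $P_0 \equiv 1$ is \emph{not} in the Plancherel support, so this identity is the statement that the spherical transform of the integrable function $f$ is continuous up to $\lambda = 0$ and takes the expected value there — which for general positive-definite $f$ can fail (the Valette example), but is fine once $\widehat{f} \ge 0$ everywhere, since then $f$ is the spherical transform of a finite positive measure on $[0,\infty)$ by Bochner--Godement and $\int f$ picks out the mass formula. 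Combining the limits gives
\[
\frac{\Delta_{\HH^n}(r)}{\vol(B_r^n)}\widehat{f}(0) - 2\,\frac{\Delta_{\HH^n}(r)}{\vol(B_r^n)}\widehat{f}(0) + f(0) \ge 0,
\]
which rearranges to $\Delta_{\HH^n}(r) \le \vol(B_r^n) f(0)/\widehat{f}(0)$.

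The main obstacle is the convergence of the cross terms and the identification of the limit with $\widehat{f}(0)$. In $\RR^n$ this was ``not hard to compute'' because translation-invariance makes $\int_{|y| \le R} f(x-y)\, dy$ literally equal to $\int_{B_{R-r}^n} f$ for $|x| \le r$, and $\int_{\RR^n} f = \widehat{f}(0)$ is the definition of the Fourier transform at $0$. In $\HH^n$ both facts require care: the inner integral is only approximately $\int_{\HH^n} f$ because a hyperbolic ball is not a group translate of another (though the error is controlled once $f$ is integrable and $R - \rho \to \infty$), and $\int_{\HH^n} f = \widehat{f}(0)$ is a genuine analytic claim about the spherical transform near the bottom of the spectrum rather than a definition. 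I would handle the first point by a dominated-convergence argument using integrability of $f$ and the monotone exhaustion $B_{R-\rho}^n(x) \uparrow \HH^n$, and the second by invoking Bochner--Godement together with the hypothesis $\widehat{f} \ge 0$ to write $f = \int_0^\infty P_\lambda \, d\sigma(\lambda)$ for a finite positive measure $\sigma$, whence both $f(0) = \sigma([0,\infty))$ and (by Fubini, since $\int_{\HH^n} P_\lambda = 0$ for $\lambda$ in the Plancherel support but the atom at $\lambda = 0$ contributes) $\int_{\HH^n} f = \widehat{f}(0)$. This is exactly the step where the conjecture's caveat — that we do not know whether $\widehat{f} \ge 0$ everywhere is truly necessary — becomes visible: without it the limit identification breaks, as Valette's function shows.
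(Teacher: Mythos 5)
This statement is labeled a \emph{conjecture} in the paper, and Section~\ref{subsec:obstacles} is devoted precisely to explaining why the approach you are taking --- transplanting the proof of Theorem~\ref{thm:CE} to $\HH^n$ --- fails. Your argument founders on the $\lambda^2$ term. You claim that
\[
\frac{1}{\vol(B_R^n)} \iint_{B_R^n \times B_R^n} f(x,y)\, dx\, dy \longrightarrow \int_{\HH^n} f = \widehat{f}(0),
\]
but this is exactly what the paper shows to be false. Writing $\iint_{B_R\times B_R} f = \int_{\HH^n} f \cdot (\chi_R*\chi_R)$, the paper proves (via Proposition~\ref{prop:overlap}) that $(\chi_R*\chi_R)(z)/\vol(B_R^n)$ converges not to $1$ but to the incomplete beta ratio \eqref{eq:asympoverlap}, which equals $1$ at $z=0$ and decays to $0$ as $z\to\infty$. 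Geometrically, in $\HH^n$ almost all of the volume of $B_R^n$ is concentrated within bounded distance of the boundary, so a typical $x\in B_R^n$ has $\int_{B_R^n} f(x,y)\,dy$ equal to a proper fraction of $\int_{\HH^n} f$, not the full integral --- no choice of auxiliary radius can evade this, because the boundary layer carries a constant fraction of the total volume no matter how large $R$ is. This is in stark contrast to $\RR^n$, where the boundary layer is a vanishing fraction.

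You also assert that $\vol(B_\rho^n)/\vol(B_R^n)\to 1$ when $R-\rho\to\infty$ with $R-\rho=o(\rho)$; this is false in hyperbolic space. Since $\vol(B_\rho^n)\sim c\,e^{(n-1)\rho}$, taking $R=\rho+\sqrt{\rho}$ gives $\vol(B_\rho^n)/\vol(B_R^n)\sim e^{-(n-1)\sqrt{\rho}}\to 0$, so your normalization $\lambda=-N_\rho/\vol(B_R^n)$ kills the first two terms and the resulting inequality degenerates to $f(0)\ge 0$. Normalizing instead by $\vol(B_\rho^n)$ makes the $\lambda^2$ term diverge. There is no choice that makes both the volume ratio and the quadratic term behave, which is precisely the obstruction: these two requirements are incompatible under exponential volume growth. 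The paper can only prove the bound for periodic packings (Theorem~\ref{thm:hypLP}), using the spectral decomposition on $\Gamma\backslash\HH^n$ and Lemma~\ref{lem:hypsumposdef} to sidestep the need to integrate over larger and larger balls, and the general conjecture remains open.
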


Here, of course, $\vol(B_r^n)$ denotes the volume of a ball of radius
$r$ in $\HH^n$.

Let
\[
\Delta^{\rm LP}_{\HH^n}(r) = \inf_f \vol(B_r^n) \frac{f(0)}{\widehat{f}(0)},
\]
where the infimum is over all $f$ satisfying the hypotheses of
Conjecture~\ref{conjecture:hypLP}.  The conjecture says $\Delta^{\rm
LP}_{\HH^n}(r)$ is an upper bound for $\Delta_{\HH^n}(r)$. Regardless of
whether that is true, $\Delta^{\rm LP}_{\HH^n}(r)$ can be viewed as the
solution of an abstract optimization problem.

The following theorem is the hyperbolic
analogue of Rodemich's theorem.

\begin{theorem} \label{theorem:hyperbolicRodemich}
For $\pi/3 \le \theta \le \pi$, positive integers $n \ge 2$, and $r>0$,
\[
\Delta^{\rm LP}_{\HH^n}(r) \le \sin^{n-1}(\theta/2) A^{\rm LP}(n,\theta).
\]
\end{theorem}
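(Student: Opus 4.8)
The plan is to follow the proof of Theorem~\ref{thm:LP-match}, building a radial kernel on $\HH^n$ out of the spherical auxiliary function and the projection argument behind Theorem~\ref{thm:hyperbolic-compare}. Fix $g$ satisfying the hypotheses of Theorem~\ref{thm:DGS} with parameters $n$ and $\theta$, let $R$ be defined by $\sin(\theta/2)=(\sinh r)/(\sinh R)$ (so $r\le R\le 2r$ since $\pi/3\le\theta\le\pi$), write $\chi_R$ for the indicator of the ball of radius $R$, and let $u_x(z)$ denote the unit tangent vector at $z$ pointing toward $x$, so that $\langle u_x(z),u_y(z)\rangle=\cos\angle xzy$. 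Define
\[
f(x,y)=\int_{\HH^n}\chi_R(d(x,z))\,\chi_R(d(y,z))\,g\bigl(\langle u_x(z),u_y(z)\rangle\bigr)\,dz .
\]
This depends only on $d(x,y)$, hence is a radial kernel, and it vanishes for $d(x,y)>2R$, hence is integrable on $\HH^n$.

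I would then check all the hypotheses of Conjecture~\ref{conjecture:hypLP} except the sign of $\wht f$ away from $0$. Positive definiteness follows exactly as in Theorem~\ref{thm:LP-match}: $\sum_{i,j}t_it_jf(x_i,x_j)$ equals $\int_{\HH^n}\sum_{i,j}\bigl(t_i\chi_R(d(x_i,z))\bigr)\bigl(t_j\chi_R(d(x_j,z))\bigr)g\bigl(\langle u_{x_i}(z),u_{x_j}(z)\rangle\bigr)\,dz$, which is nonnegative because $g$ is positive definite on the unit sphere of $T_z\HH^n$. If $d(x,y)\ge 2r$, then Lemma~\ref{lem:hyp-angle} gives $\angle xzy\ge\theta$ for every $z\in B_R^n(x)\cap B_R^n(y)$, so $g\bigl(\langle u_x(z),u_y(z)\rangle\bigr)\le 0$ and hence $f(x,y)\le 0$. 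Finally $f(0)=\vol(B^n_R)\,g(1)$, and passing to geodesic polar coordinates around $z$ and using the definition of $\ol g$ gives $\wht f(0)=\int_{\HH^n}f(x,e)\,dx=\vol(B^n_R)^2\,\ol g>0$. Lemma~\ref{lem:sinh-ratio} then yields
\[
\vol(B^n_r)\,\frac{f(0)}{\wht f(0)}=\frac{\vol(B^n_r)}{\vol(B^n_R)}\cdot\frac{g(1)}{\ol g}\le \sin^{n-1}(\theta/2)\,\frac{g(1)}{\ol g},
\]
so that, once we know the constructed $f$ is admissible, taking the infimum over all $g$ gives $\Delta^{\rm LP}_{\HH^n}(r)\le\sin^{n-1}(\theta/2)A^{\rm LP}(n,\theta)$.

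The remaining, and main, obstacle is to show $\wht f(\lambda)\ge 0$ for all $\lambda>0$. This is the one genuinely new feature compared with the Euclidean case: positive definiteness only delivers $\wht f\ge 0$ on the Plancherel support $[(n-1)^2/4,\infty)$, and, unlike in $\RR^n$, it can fail below that threshold for positive-definite $f$, as Valette's example shows. To handle it I would use Schoenberg's theorem to write $g=\sum_{k\ge 0}c_kC_k^{n/2-1}$ with $c_k\ge 0$, together with the hyperbolic Gegenbauer addition theorem: re-expanding the spherical function about the center $z$ of the $R$-ball,
\[
P_\lambda(x,e)=\sum_{k\ge 0}a_k\,\phi_k\bigl(\lambda;d(x,z)\bigr)\,\phi_k\bigl(\lambda;d(z,e)\bigr)\,C_k^{n/2-1}(\cos\angle xze),
\]
with positive constants $a_k$ and real radial functions $\phi_k(\lambda;\cdot)$ (Jacobi-type functions, with $\phi_0(\lambda;\cdot)=P_\lambda$). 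Substituting both expansions into $\wht f(\lambda)=\int_{\HH^n}f(x,e)P_\lambda(x,e)\,dx$, passing to geodesic polar coordinates first around $z$ and then around $e$, and using the orthogonality of the $C_k^{n/2-1}$ on $S^{n-1}$, I expect the double integral to collapse to
\[
\wht f(\lambda)=\sum_{k\ge 0}b_k\,c_k\,\Bigl(\int_0^R\phi_k(\lambda;\rho)\,\sinh^{n-1}\!\rho\,d\rho\Bigr)^{2}
\]
with $b_k>0$; since $c_k\ge 0$ and the bracketed integrals are real, this is nonnegative for every $\lambda\ge 0$, and its $k=0$ term reproduces $\wht f(0)=\vol(B^n_R)^2\ol g$.

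The technical heart is thus establishing the addition theorem in exactly this form---in particular the positivity of the $a_k$ and the reality of $\phi_k(\lambda;\cdot)$ for all $\lambda\ge 0$, including below the Plancherel threshold---and justifying the interchanges of integration; this is standard harmonic analysis on $\HH^n$ (the Jacobi-function expansion of spherical functions) but must be set up carefully. A more representation-theoretic alternative is to note that the construction realizes $f$ as a $K$-bi-invariant matrix coefficient $\langle v,\Pi(\cdot)v\rangle$ of a unitary representation $\Pi$ of $G=\mathrm{Isom}^+(\HH^n)$ with $v$ a $K$-fixed vector, to decompose the cyclic subrepresentation generated by $v$ into spherical representations $\pi_\lambda$ (possible precisely because $v$ is $K$-fixed), and to identify $\wht f(\lambda)$ with a nonnegative expression built from the positive-definite inner products of the $\pi_\lambda$; but I expect the addition-theorem computation to be cleaner to write down. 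Finally, when $\theta<\pi/3$ one can instead imitate the lift-to-$S^{n}$ construction, as in the remark following Theorem~\ref{thm:LP-match}; I would relegate that variant to a remark.
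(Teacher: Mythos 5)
Your construction of $f$ from $g$, and the verifications of positive definiteness, the sign condition for $d(x,y)\ge 2r$ via Lemma~\ref{lem:hyp-angle}, the computations $f(0)=\vol(B_R^n)g(1)$ and $\widehat f(0)=\vol(B_R^n)^2\,\ol g$, and the use of Lemma~\ref{lem:sinh-ratio}, all match the paper's proof, and you correctly isolate the sign of $\widehat f(\lambda)$ for $\lambda$ below the Plancherel threshold as the genuinely new obstacle relative to Theorem~\ref{thm:LP-match}. The gap is precisely in how that sign condition is established.

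The paper's route is quite different from either of yours and is worth noting because it is elementary. Writing $\widehat f(\lambda)$ as the triple integral $\int_{B_R^n(y)}\int_{B_R^n(z)}P_\lambda(x,y)\,g(\cos\angle xzy)\,dx\,dz$, it observes that the integrand depends only on the triple $(d(x,z),d(y,z),\angle xzy)$, and that this triple has the same joint distribution whether one fixes $y$, picks $z$ uniformly in $B_R^n(y)$ and then $x$ uniformly in $B_R^n(z)$, or fixes $z$ and picks $x,y$ independently uniformly in $B_R^n(z)$. That swap turns $\widehat f(\lambda)$ into $\iint_{B_R^n(z)^2}P_\lambda(x,y)\,g(\cos\angle xzy)\,dx\,dy$, and then the Schur product theorem (product of positive-definite kernels is positive definite, applied to $P_\lambda$ on $\HH^n$ and $g(\cos\angle xzy)$ on $\HH^n\setminus\{z\}$) finishes it. No addition theorem, no Jacobi functions.

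Your primary proposal, via a hyperbolic Gegenbauer addition theorem for $P_\lambda$ plus Schoenberg's expansion of $g$, is plausible and in the spirit of Koornwinder's addition formula for Jacobi functions, but as you say yourself it hinges on the positivity of the coefficients $a_k$ and the reality of the associated radial functions $\phi_k(\lambda;\cdot)$ for \emph{all} $\lambda\ge 0$, including the complementary-series range $\lambda<(n-1)^2/4$. That is not a routine citation and you do not establish it; as written this is a genuine gap. Your representation-theoretic alternative is, as stated, too coarse to close it: realizing $f$ as $\langle v,\Pi(\cdot)v\rangle$ with $v$ a $K$-fixed cyclic vector is equivalent to positive definiteness, and positive definiteness alone does \emph{not} imply $\widehat f\ge 0$ off the Plancherel support --- this is exactly what Valette's example (cited in Section~\ref{sec:hypLP}) rules out. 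What saves the day is the specific tensor-product structure of the representation built from $g$ and the indicator of $B_R^n$, and exploiting that structure is precisely what the paper's change-of-variables plus Schur-product argument does in an elementary way. If you want to pursue the addition-theorem route you would need to prove the required positivity of the $a_k$ and reality of the $\phi_k$ across the full range of $\lambda$; otherwise the Schur-product argument is both shorter and self-contained.
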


\begin{proof}
The argument is much like the proof of Theorem~\ref{thm:LP-match}.
Define $R$ by $\sinh R = (\sinh r)/\sin(\theta/2)$.  Given a
function $g$ satisfying the hypotheses of Theorem~\ref{thm:DGS}, define
$f$ by
\[
f(x,y) = \int_{B^n_R(x) \cap B^n_R(y)} g( \cos \angle xzy) \, dz,
\]
where $\angle xzy$ denotes the angle at $z$ formed by the geodesics to
$x$ and $y$.  Of course, this angle is not defined when $x=z$ or $y=z$,
but these cases occur with measure zero.

Exactly the same approach as in the proof of Theorem~\ref{thm:LP-match}
shows that $f$ is a positive-definite function and that $f(x,y) \le 0$
when $d(x,y) \ge 2r$. However, merely being positive definite does not
imply that $\widehat{f}(\lambda) \ge 0$ for all $\lambda \ge 0$.  To
prove that, we start by fixing $y \in \HH^n$ and writing
\begin{align*}
\widehat{f}(\lambda) &= \int_{\HH^n}
P_\lambda(x,y) f(x,y) \, dx\\
&= \int_{\HH^n} \int_{B^n_R(x) \cap B^n_R(y)} P_\lambda(x,y) g( \cos \angle xzy) \, dz \, dx\\
&= \int_{B^n_R(y)} \int_{B^n_R(z)}
P_\lambda(x,y) g( \cos \angle xzy) \, dx \, dz.
\end{align*}
The integrand depends only on $d(x,z)$, $d(y,z)$, and $\angle xzy$, because
the hyperbolic law of cosines determines $d(x,y)$ using this data, and the
integral is proportional to the expected value of $P_\lambda(x,y) g( \cos
\angle xzy)$ if we fix $y$, pick $z \in B^n_R(y)$ uniformly at random, and
then pick $x \in B^n_R(z)$.  Equivalently, we can fix $z$ and pick $x,y \in
B^n_R(z)$, because that induces the same measure on the three parameters
$d(x,z)$, $d(y,z)$, and $\angle xzy$.  This is obvious for $d(x,z)$ and
$d(y,z)$, since they simply follow the radial distance distribution on
$B^n_R$. (Picking $z \in B^n_R(y)$ or $y \in B^n_R(z)$ yields the same
distribution on $d(y,z)$.) For $\angle xzy$ it amounts to saying that the
angle at $z$ between a random point $x$ and a fixed point $y$ is distributed
the same as that between two random points $x$ and $y$.

Thus, we can change variables to fix $z$ instead of
$y$ and integrate over $x$ and $y$ to obtain
\[
\widehat{f}(\lambda) = \int_{B^n_R(z)} \int_{B^n_R(z)}
P_\lambda(x,y) g( \cos \angle xzy) \, dx \, dy.
\]
Now we can see that $\widehat{f}(\lambda) \ge 0$, because
$(x,y) \mapsto P_\lambda(x,y) g( \cos \angle xzy)$ defines a
positive-definite kernel for $x,y \in \HH^n\setminus\{z\}$.
Specifically, the product of two positive-definite kernels is
positive definite by the Schur product theorem (Theorem~7.5.3
in \cite{HJ}), which says that the set of positive-semidefinite
matrices is closed under the Hadamard product.

It also follows from this formula and $P_0=1$ that
\[
\widehat{f}(0) = \vol(B_R^n)^2 \ol{g},
\]
and combining this with $f(0) = \vol(B_R^n) g(1)$ yields
\[
\vol(B_r^n) \frac{f(0)}{\widehat{f}(0)} = \frac{\vol(B_r^n)}{\vol(B_R^n)} \frac{g(1)}{\ol g }
\le \sin^{n-1}(\theta/2) \frac{g(1)}{\ol g },
\]
as desired.
\end{proof}

In the remainder of this section, we explain why a straightforward
approach fails to prove Conjecture~\ref{conjecture:hypLP} and how to
prove it for periodic packings under an admissibility condition on $f$.
The latter proof is based on unpublished work of Cohn, Lurie, and Sarnak.

\subsection{Obstacles to proving the conjecture}
\label{subsec:obstacles}

We have not been able to prove Conjecture~\ref{conjecture:hypLP} by
imitating the proof of Theorem~\ref{thm:CE}.  The problem is that the
boundary effects when restricting to a ball are not negligible.

Specifically, consider a hyperbolic sphere packing with balls of radius
$r$, and imagine restricting it to a ball of radius $R$ (i.e., looking
only at the points within this large ball).  Let $S_R$ be the set of
all sphere centers in this ball and $\mu_R$ the hyperbolic volume measure on the
ball, and consider the signed measure
\[
\nu = \sum_{x \in S_R} \delta_x + \lambda \mu_R,
\]
where $\lambda$ is a constant to be specified shortly. Because $f$ is
positive definite,
\[
\iint f(x,y) \, d\nu(x) \, d\nu(y) \ge 0,
\]
which implies
\[
\lambda^2 \iint f(x,y) \, d\mu_R(x) \, d\mu_R(y) + 2 \lambda \sum_{x \in S_R}
\int f(x,y) \, d\mu_R(y) + |S_R| f(0) \ge 0,
\]
as in the proof of Theorem~\ref{thm:CE}.

Now suppose we have a Bowen-Radin measure on packings, with density $\delta$.
Averaging over such a measure yields
\[
\left(\lambda^2 + \frac{2\delta\lambda}{\vol(B_r^n)}\right) \iint f(x,y) \, d\mu_R(x) \, d\mu_R(y) + \delta \frac{\vol(B_R^n)}{\vol(B_r^n)} f(0) \ge 0,
\]
because Lemma~\ref{lemma:uniform} says the sphere centers are uniformly
distributed. In particular, taking $\lambda = - \delta/\vol(B_r^n)$ yields
\[
\delta \le \vol(B_r^n)\frac{f(0)}{\iint f(x,y) \, d\mu_R(x) \, d\mu_R(y) / \vol(B_R^n)}.
\]
This proves a legitimate bound on the density:

\begin{proposition}
Let $f \colon [0,\infty) \to \RR$ be continuous and positive definite
on $\HH^n$, and suppose $f(x) \le 0$ for all $x \ge 2r$.  Then for each
$R>0$,
\[
\Delta_{\HH^n}(r) \le \vol(B_r^n) \frac{f(0)}{\iint f(x,y) \, d\mu_R(x) \, d\mu_R(y) / \vol(B_R^n)},
\]
assuming the denominator is not zero.
\end{proposition}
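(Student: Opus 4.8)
The plan is to turn the computation sketched in the paragraphs immediately above into a formal proof; it is the proof of Theorem~\ref{thm:CE} with the limit $R\to\infty$ suppressed. Fix $R>0$, write $B$ for a ball of radius $R$ in $\HH^n$, and let $\mu_R$ denote hyperbolic volume measure restricted to $B$. Given an arbitrary packing by balls of radius $r$, let $S_R$ be the set of its centers lying in $B$ and form the signed measure $\nu=\sum_{x\in S_R}\delta_x+\lambda\mu_R$, with $\lambda$ to be chosen later. Since $f$ is positive definite, $\iint f(x,y)\,d\nu(x)\,d\nu(y)\ge0$, which is justified by approximating $\mu_R$ by point masses exactly as in the proofs of Theorems~\ref{thm:DGS} and~\ref{thm:CE}.

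Expanding this into three pieces, the purely continuous term $\lambda^2\iint_{B\times B}f\,d\mu_R\,d\mu_R$ does not depend on the packing, the cross term is $2\lambda\sum_{x\in S_R}\int_B f(x,y)\,d\mu_R(y)$, and the pure point term $\sum_{x,y\in S_R}f(d(x,y))$ is at most $|S_R|\,f(0)$ because two distinct centers of a radius-$r$ packing lie at distance at least $2r$, where $f\le0$. Hence
\[
\lambda^2\iint_{B\times B}f\,d\mu_R\,d\mu_R+2\lambda\sum_{x\in S_R}\int_B f(x,y)\,d\mu_R(y)+|S_R|\,f(0)\ge0.
\]

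Next I would average this over a Bowen--Radin measure $\mu$ of density $\delta$. The first term is constant; for the last, Lemma~\ref{lemma:uniform} gives $\EE[\,|S_R|\,]=\delta\,\vol(B_R^n)/\vol(B_r^n)$; and for the cross term I would apply the same lemma to the function $x\mapsto\mathbf{1}_B(x)\int_B f(x,y)\,d\mu_R(y)$, which is continuous and hence bounded on the compact ball $B$, obtaining $\EE\bigl[\,\sum_{x\in S_R}\int_B f(x,y)\,d\mu_R(y)\,\bigr]=\frac{\delta}{\vol(B_r^n)}\iint_{B\times B}f\,d\mu_R\,d\mu_R$. Averaging therefore produces
\[
\Bigl(\lambda^2+\tfrac{2\delta\lambda}{\vol(B_r^n)}\Bigr)\iint_{B\times B}f\,d\mu_R\,d\mu_R+\tfrac{\delta\,\vol(B_R^n)}{\vol(B_r^n)}\,f(0)\ge0.
\]

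Finally I would set $\lambda=-\delta/\vol(B_r^n)$, so the bracketed coefficient becomes $-\delta^2/\vol(B_r^n)^2$; when $\delta>0$, dividing through by $\delta/\vol(B_r^n)$ and rearranging yields exactly the claimed bound on $\delta$, while the case $\delta=0$ is trivial because $f(0)\ge0$ and $\iint_{B\times B}f\,d\mu_R\,d\mu_R\ge0$, both by positive definiteness, force the right-hand side to be nonnegative. Taking the supremum over all Bowen--Radin measures, which by definition is $\Delta_{\HH^n}(r)$, then gives the theorem. Note that the same positive definiteness makes the denominator $\iint_{B\times B}f\,d\mu_R\,d\mu_R/\vol(B_R^n)$ nonnegative, so the hypothesis that it is nonzero simply says it is positive. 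The argument poses no real difficulty; the one step deserving a second look is the averaging of the cross sum, which is legitimate because the integrand there is a fixed bounded continuous function, independent of the packing.
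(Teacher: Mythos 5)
Your proof is correct and follows the same route as the paper: signed measure $\nu=\sum_{x\in S_R}\delta_x+\lambda\mu_R$, positive definiteness, the packing inequality $\sum_{x,y\in S_R}f(d(x,y))\le|S_R|f(0)$, averaging over the Bowen--Radin measure via Lemma~\ref{lemma:uniform}, and the choice $\lambda=-\delta/\vol(B_r^n)$. You have merely made explicit the extension of Lemma~\ref{lemma:uniform} from Borel sets to bounded measurable functions (Campbell's formula) used in the cross term and separated out the trivial $\delta=0$ case, both of which the paper leaves implicit.
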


It is natural to add the assumption that $f$ is integrable and then
take the limit as $R \to \infty$. However, the denominator does not
converge to $\widehat{f}(0)$, as it does in the Euclidean case.  To see
why, let $\chi_R$ be the characteristic function of $[0,R]$.
Then
\[
\iint f(x,y) \, d\mu_R(x) \, d\mu_R(y) = \int_{\HH^n} (f * \chi_R) \chi_R,
\]
where the right side denotes the integral of a radial function on $\HH^n$, and
the convolution is defined by
\[
(f * g)(x,y) = \int_{\HH^n} f(x,z) g(y,z) \, dz.
\]
Because the Fourier transform is unitary,
\[
\int_{\HH^n} (f * \chi_R) \chi_R = \int \widehat{f * \chi_R} \widehat{\chi}_R \, d \mu_P,
\]
where $\mu_P$ is the Plancherel measure, and that simplifies to
\[
\int \widehat{f} \widehat{\chi}_R^2 \, d \mu_P.
\]
One can show similarly that $\vol(B_R^n) = \int
\widehat{\chi}_R^2\, d \mu_P$, and thus
\[
\frac{\iint f(x,y) \, d\mu_R(x) \, d\mu_R(y)}{\vol(B_R^n)} = \frac{\int \widehat{f} \widehat{\chi}_R^2\, d \mu_P}{\int \widehat{\chi}_R^2\, d \mu_P}.
\]
We can already see a problem: we would like the mass of
$\widehat{\chi}_R^2$ to be concentrated near $0$ as $R \to \infty$.
However, $0$ is not even contained within the support of the Plancherel
measure $\mu_P$, so this cannot possibly work.  To see how badly it fails, we
return to radial functions on $\HH^n$ via
\[
\int \widehat{f} \widehat{\chi}_R^2 \, d \mu_P = \int_{\HH^n} f \cdot (\chi_R * \chi_R).
\]
(We use $\cdot$ for multiplication here to avoid confusion with $f$
applied to an argument.)  The function $(\chi_R
* \chi_R)/\vol(B_R^n)$ measures the fraction of overlap between two
balls of radius $R$ whose centers are a given distance apart.
In the limit as $R \to \infty$, this function does not converge
to $1$, as it does in Euclidean space.  Instead, when the
distance between the centers is $z$ it converges to
\begin{equation} \label{eq:asympoverlap}
\frac{B\left(\frac{1}{1+e^z}; \frac{n-1}{2},\frac{n-1}{2}\right)}{B\left(\frac{1}{2};\frac{n-1}{2},\frac{n-1}{2}\right)},
\end{equation}
where
\[
B(u;\alpha,\beta) = \int_0^u t^{\alpha-1} (1-t)^{\beta-1} \, dt
\]
is the incomplete beta function. (See
Appendix~\ref{app:overlap} for the calculation.)  Note that
\eqref{eq:asympoverlap} equals $1$ when $z=0$ and vanishes in
the limit as $z \to \infty$.

Thus,
\[
\frac{\iint f(x,y) \, d\mu_R(x) \, d\mu_R(y)}{\vol(B_R^n)}
\]
converges to the integral of $f$ times a function that takes values
between $0$ and $1$.  Variants of this approach, for example replacing
$\chi_R$ with a smoother function such as the heat kernel, fail for
essentially the same reason.  In Euclidean space the heat
kernel converges to the constant function $1$ as time tends to
infinity, if we rescale it so its value at the origin is fixed as $1$.
In other words, flowing heat becomes nearly uniformly distributed over
time.  However, in hyperbolic space that is not true (heat kernel
asymptotics can be found in \cite{DM88}).

\subsection{Periodic packings}

We can prove a variant of Conjecture~\ref{conjecture:hypLP} in the special
case of periodic packings, i.e., packings that are invariant under a
discrete, finite-covolume group of isometries.\footnote{Note that the
definition of ``periodic'' varies between papers: \cite{BR03} requires a
cocompact group, while \cite{B03} does not.}  This was first proved by Cohn,
Lurie, and Sarnak in unpublished work; here, we give a proof under weaker
hypotheses but using the same fundamental approach. It is not known in
general whether periodic packings come arbitrarily close to the optimal
Bowen-Radin packing density, although this has been proved for the hyperbolic
plane \cite{B03}.  Maximizing the density for a single-orbit packing in
$\HH^n$ under a cocompact group is equivalent to maximizing the systolic
ratio of an $n$-dimensional compact hyperbolic manifold (see \cite{K07} for
background on systolic geometry), which makes periodic packings a
particularly important case.

Given a periodic packing with balls of radius $r$, let $\Gamma
\subset G$ be its symmetry group. By assumption,
$\Gamma\backslash\HH^n$ has finite volume. Suppose the spheres
in the packing are centered on the orbits $\Gamma x_1, \dots,
\Gamma x_N$, and let $\Gamma_i$ be the stabilizer of $x_i$ in
$\Gamma$. Then the density of the packing is the fraction of a
fundamental domain covered by balls.  If the stabilizers are
trivial, then this fraction is simply $N
\vol(B_r^n)/\vol(\Gamma \backslash \HH^n)$.  More generally,
$\Gamma_i$ preserves the ball centered at $x_i$, and only a
$1/|\Gamma_i|$ fraction of this ball will lie in any given
fundamental domain (specifically, one element of each
$\Gamma_i$-orbit). Thus, the density is
\[
\frac{\vol(B_r^n)}{\vol(\Gamma \backslash \HH^n)} \sum_{i=1}^N \frac{1}{|\Gamma_i|}.
\]
It is the same as the density of the corresponding Bowen-Radin measure.\footnote{Proposition~1
in \cite{BR03} is stated for the cocompact case, but the proof works for the finite
covolume case as well.}

The proof of the linear programming bounds is based on the Selberg
trace formula \cite{S56}.  More precisely, we use a pre-trace formula
that plays the role of the Poisson summation formula in \cite{CE03}.
To minimize the background required, we derive it from the
spectral theory of the Laplacian on $\Gamma\backslash\HH^n$.  Note that
Selberg did not publish a complete proof of the trace formula.  For a
detailed proof, see \cite{F67} for $\HH^2$, \cite{V73} for hyperbolic
spaces under some mild hypotheses on the discrete group, \cite{EGM98}
for the three-dimensional case (using techniques that work in greater
generality), and \cite{CS80} for hyperbolic spaces in general.

Call a function $f \colon [0,\infty) \to \RR$ \emph{admissible} on $\HH^n$
if
\begin{enumerate}
\item it is continuous and integrable on $\HH^n$,

\item for every discrete subgroup $\Gamma \subset
G$ for which $\Gamma\backslash\HH^n$ has finite volume,
\[
\sum_{\gamma \in \Gamma} f(\gamma x, y)
\]
converges absolutely for all $x,y \in \HH^n$ and uniformly on compact
subsets of $\HH^n$, and

\item for each fixed $y$,
\[
x \mapsto \sum_{\gamma \in \Gamma} f(\gamma x, y)
\]
is in $L^2(\Gamma\backslash\HH^n).$
\end{enumerate}

All of the functions constructed in the proof of Theorem~\ref{theorem:hyperbolicRodemich}
are admissible by the following lemma.

\begin{lemma}
Every continuous, compactly supported function is admissible.
\end{lemma}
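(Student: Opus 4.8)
The plan is to verify the three conditions in the definition of admissibility directly for a continuous, compactly supported function $f$, using the standard fact that a discrete, finite-covolume subgroup $\Gamma \subset G$ acts properly discontinuously on $\HH^n$. Say $f$ is supported in the ball of radius $T$ about the origin (as a function of distance, $f(s) = 0$ for $s > T$). Condition (1) is immediate: continuity is given, and a continuous function supported in $\{x : d(x,e) \le T\}$ is integrable on $\HH^n$ since that ball has finite hyperbolic volume.

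For condition (2), fix $x, y \in \HH^n$. The term $f(\gamma x, y)$ vanishes unless $d(\gamma x, y) \le T$, i.e., unless $\gamma x$ lies in the ball $B_T^n(y)$. Because $\Gamma$ acts properly discontinuously, the set $\{\gamma \in \Gamma : \gamma x \in B_T^n(y)\}$ is finite, so the sum $\sum_{\gamma \in \Gamma} f(\gamma x, y)$ has only finitely many nonzero terms and thus converges absolutely. For uniformity on compact subsets, let $x$ and $y$ range over a compact set $C$; then $\gamma x \in B_T^n(y)$ forces $d(\gamma e, e) \le T + 2\,\mathrm{diam}(C \cup \{e\})$, so there is a single finite set $F \subset \Gamma$ (depending only on $C$ and $T$) outside of which every term vanishes identically for all $(x,y) \in C \times C$. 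Hence the sum is a finite sum of continuous functions on $C\times C$, so it converges uniformly there (trivially, being eventually constant in the index).

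For condition (3), fix $y$ and let $F(x) = \sum_{\gamma \in \Gamma} f(\gamma x, y)$, which descends to a function on $\Gamma \backslash \HH^n$. By the argument above, $F$ is continuous, and it is supported on the image of $\bigcup_{\gamma \in \Gamma} \gamma^{-1} B_T^n(y)$, which is all of $\Gamma\backslash\HH^n$ in general — so compact support on the quotient is not automatic. Instead I observe directly that $F$ is bounded: for any $x$, the number of $\gamma$ with $\gamma x \in B_T^n(y)$ is uniformly bounded (again by proper discontinuity, a ball of fixed radius meets any $\Gamma$-orbit in a number of points bounded independently of the orbit — this follows from the existence of a positive lower bound on the displacement, or more carefully from a packing argument using that $\Gamma e$ has a uniformly discrete sub-orbit), and each term is bounded by $\max |f|$. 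A bounded measurable function on the finite-volume space $\Gamma\backslash\HH^n$ lies in $L^2$, so $F \in L^2(\Gamma\backslash\HH^n)$.

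The main obstacle is the point just flagged in condition (3): unlike a compactly supported function on $\Gamma\backslash\HH^n$, the periodization $F$ generally has full support, so square-integrability must be obtained from a uniform pointwise bound rather than from compact support. The needed bound — that a ball of fixed radius contains a number of points of any $\Gamma$-orbit bounded uniformly over all orbits — is where the finite-covolume hypothesis and proper discontinuity are genuinely used, and it is worth stating carefully. (One clean route: the stabilizer of a point in $\Gamma$ is finite and $\Gamma$ acts properly, so by a volume-packing comparison the orbit count in $B_T^n(y)$ is at most $\vol(B_{T+\varepsilon}^n)/\vol(B_\varepsilon^n)$ for a suitable $\varepsilon$ depending only on $\Gamma$, namely half the infimum of nonzero displacements.) Once this is in hand, everything else is routine.
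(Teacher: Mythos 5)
Your overall plan follows the paper's proof: conditions (1) and (2) are handled the same way (finitely many nonzero terms, with a uniform finite set over a compact range), and for condition (3) both you and the paper derive square-integrability from a pointwise bound on the periodization rather than from compact support on the quotient. The gap is in the parenthetical justification of that pointwise bound. You propose taking $\varepsilon$ ``depending only on $\Gamma$, namely half the infimum of nonzero displacements,'' and earlier appeal to ``a positive lower bound on the displacement.'' This fails precisely in the case the definition of admissibility insists on covering: a discrete $\Gamma$ of finite covolume that is not cocompact. Such a $\Gamma$ contains parabolic elements, and for any $\varepsilon>0$ there are points $x$ high in a cusp with $d(\gamma x, x)<2\varepsilon$ for some $\gamma$ outside the stabilizer of $x$, so no such $\varepsilon$ exists. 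What rescues the argument, and what the paper actually does, is to use the fact that condition (3) fixes $y$: rewriting $d(\gamma x, y)\le T$ as $d(x,\gamma^{-1}y)\le T$ shows the count in question is the number of points of the \emph{fixed} orbit $\Gamma y$ inside $B^n_T(x)$, up to the factor $\abs{\Gamma_y}$ from the stabilizer of $y$ (a factor your count omits). One may then choose $\varepsilon=\varepsilon(y)>0$ so that the balls $B^n_\varepsilon(\gamma^{-1}y)$ intersect only via $\Gamma_y$, giving the uniform bound $\abs{\Gamma_y}\vol(B^n_{T+\varepsilon})/\vol(B^n_\varepsilon)$; that $\varepsilon$ depends on $y$, but $y$ is fixed, so this is harmless. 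Alternatively, once the orbit meets $B^n_T(y)$ one can translate $x$ into that ball and bound the count by $\abs{\{\gamma:\gamma \ol{B^n_T(y)}\cap \ol{B^n_T(y)}\ne\emptyset\}}$, which is finite by proper discontinuity and independent of $x$; either fix closes the gap.
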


\begin{proof}
Let $f \colon [0,\infty) \to \RR$ be continuous, and suppose $f$ vanishes outside $[0,r]$.
In the sum
\[
\sum_{\gamma \in \Gamma} f(\gamma x, y),
\]
the term $f(\gamma x,y)$ is nonzero only if $d(\gamma x, y) \le r$.  Absolute
and uniform convergence on compact subsets is easy: if $x$ and $y$ are confined to
a compact set $K$, then there is a finite subset $S$ of $\Gamma$ such that $d(\gamma x, y) > r$
whenever $\gamma \not\in S$, because $\Gamma$ acts discontinuously on $\HH^n$ (see \S5.3 in \cite{R06}).
Then only the terms with $\gamma \in S$ contribute to the sum.

To complete the proof, we will show that the sum is bounded for fixed $y$,
based on the proof of Lemma~2.6.1 in \cite{EGM98}. Choose $\varepsilon>0$ so
that the balls $B_{\varepsilon}^n(\gamma^{-1} y)$ form a sphere packing; in
other words, the only intersections between them come from the stabilizer
$\Gamma_y$ of $y$. Then the number of $\gamma$ for which $d(\gamma x, y) \le
r$ is at most
\[
\frac{|\Gamma_y| \vol(B^n_{r+\varepsilon})}{\vol(B^n_{\varepsilon})},
\]
because at most $\vol(B^n_{r+\varepsilon})/\vol(B^n_{\varepsilon})$ of the balls
$B_{\varepsilon}^n(\gamma^{-1} y)$ can fit into $B_{r+\varepsilon}^n(x)$, and each occurs
for $|\Gamma_y|$ choices of $\gamma$.  This bound
is independent of $x$, and
\[
\sum_{\gamma \in \Gamma} |f(\gamma x, y)| \le \frac{|\Gamma_y| \vol(B^n_{r+\varepsilon})}{\vol(B^n_{\varepsilon})} \max_{[0,r]} |f|.
\]
\end{proof}

The following lemma provides more examples of admissible functions.
It is essentially Lemma~1.4 in \cite{V73}, where it is
attributed to Selberg, and we include the proof here for completeness.

\begin{lemma}[Selberg]
Suppose $f \colon [0,\infty) \to \RR$ is continuous and integrable on $\HH^n$, and
there exist constants $c_1,c_2$ such that for all $x,y \in \HH^n$,
\[
|f(x,y)| \le c_1 \int_{d(z,x) \le c_2} |f(y,z)| \, dz.
\]
Then $f$ is admissible on $\HH^n$.
\end{lemma}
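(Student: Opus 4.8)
The plan is to verify the three conditions in the definition of admissibility for the given $f$. Condition~(1) is part of the hypothesis, so the content is in~(2) and~(3), and the mechanism for both is that the hypothesis is \emph{self-improving}: it replaces a pointwise value of $|f|$ by an average of $|f|$ over a ball of the fixed radius $c_2$, and such local averages of an integrable function are automatically bounded and interact well with a properly discontinuous group action.

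For condition~(2) I would fix a discrete $\Gamma\subset G$ with $\vol(\Gamma\backslash\HH^n)<\infty$, apply the hypothesis termwise, and interchange the (nonnegative) sum and integral:
\[
\sum_{\gamma\in\Gamma}|f(\gamma x,y)|\le c_1\sum_{\gamma\in\Gamma}\int_{d(z,\gamma x)\le c_2}|f(y,z)|\,dz
=c_1\int_{\HH^n}\#\{\gamma\in\Gamma:d(\gamma x,z)\le c_2\}\,|f(y,z)|\,dz.
\]
Because $\Gamma$ acts properly discontinuously the counting function is bounded, and a standard compactness argument lets one take the bound uniform for $x$ in a fixed compact set $K$, say by $M_K$; then $\sum_\gamma|f(\gamma x,y)|\le c_1M_K\int_{\HH^n}|f|<\infty$, giving absolute convergence. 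For uniform convergence on $K$, observe that the $\gamma$-th term is supported, as a function of $z$, in $\{z:d(z,\gamma x)\le c_2\}$, so for finite $S\subset\Gamma$ the tail $\sum_{\gamma\notin S}|f(\gamma x,y)|$ is at most $c_1M_K$ times the integral of $|f(y,\cdot)|$ over $\bigcup_{\gamma\notin S}\{z:d(z,\gamma x)\le c_2\}$, $x\in K$; as $S$ exhausts $\Gamma$ this region leaves every bounded set, so integrability of $f$ makes the tail tend to $0$ uniformly on $K$.

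For condition~(3) I would fix $y$ and set $F(x)=\sum_\gamma f(\gamma x,y)$ and $G(x)=\sum_\gamma|f(\gamma x,y)|\ge|F(x)|$, both $\Gamma$-invariant and, by~(2), well defined and continuous; so it is enough to show $G\in L^2(\Gamma\backslash\HH^n)$. Since $G$ is the automorphization of $x\mapsto|f(x,y)|$, unfolding gives $\int_{\Gamma\backslash\HH^n}G=\int_{\HH^n}|f(\cdot,y)|<\infty$ and
\[
\int_{\Gamma\backslash\HH^n}G^2=\int_{\HH^n}G(x)\,|f(x,y)|\,dx\le c_1\int_{\HH^n}|f(y,z)|\left(\int_{d(x,z)\le c_2}G(x)\,dx\right)dz,
\]
the last step applying the hypothesis to $|f(x,y)|$ and unfolding again. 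When $\Gamma$ is cocompact this finishes the proof: a ball of radius $c_2$ meets only boundedly many $\Gamma$-translates of a compact fundamental domain, so $\int_{d(x,z)\le c_2}G\le C\int_{\Gamma\backslash\HH^n}G$ uniformly in $z$; hence $G$ is bounded, and a bounded function on the finite-volume space $\Gamma\backslash\HH^n$ is in $L^2$.

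The hard part is the general finite-covolume case: when $\Gamma\backslash\HH^n$ has cusps, the number of $\Gamma$-translates of a fundamental domain meeting a fixed ball grows without bound along a cusp, so $\int_{d(x,z)\le c_2}G$ is not uniformly bounded and one must use genuine decay of $f$ instead. Here the hypothesis is exactly what is needed: the ball $\{z:d(z,x)\le c_2\}$ lies in the annulus $\{z:|d(z,y)-t|\le c_2\}$ with $t=d(x,y)$, so (writing $f$ also for its radial profile, as the paper does)
\[
|f(x,y)|\le c_1\Omega_n\int_{(t-c_2)_+}^{t+c_2}|f(s)|\sinh^{n-1}s\,ds;
\]
the density $|f(s)|\sinh^{n-1}s$ is integrable on $[0,\infty)$ by the integrability hypothesis, so the right-hand side tends to $0$ as $t\to\infty$, and in fact $\sup_{t\ge s}|f(t)|\le c_1\Omega_n\int_{(s-c_2)_+}^{\infty}|f(s')|\sinh^{n-1}s'\,ds'$. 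The remaining step is to feed this decay of $|f(\cdot,y)|$ into the unfolded integral above and, in each cuspidal region of $\Gamma\backslash\HH^n$, play it off against the exponential volume growth of $\HH^n$ — which simultaneously governs the relevant orbit counts, since $\Gamma y$ is uniformly discrete — to conclude that $\int_{\HH^n}G\,|f(\cdot,y)|<\infty$. I expect this balancing of orbit growth against the decay the hypothesis forces on $f$, carried out cusp by cusp, to be the only delicate point; conditions~(1),~(2) and the cocompact case of~(3) are routine.
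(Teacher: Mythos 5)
Your handling of condition~(2) is correct and essentially identical to the paper's. But there is a genuine gap in condition~(3): you leave the finite-covolume, non-cocompact case as an unfinished sketch ("play orbit counts against decay, cusp by cusp"), and it is precisely this case that contains the content of the lemma for the applications in the paper, since the congruence groups one cares about are not cocompact.

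The idea you are missing is a one-line symmetry trick, which makes the whole unfolding computation of $\int_{\Gamma\backslash\HH^n}G^2$ unnecessary. From the hypothesis you already derived
\[
\sum_{\gamma\in\Gamma}|f(\gamma x,y)|\ \le\ c_1\,N(x)\int_{\HH^n}|f|,
\]
where $N(x)$ counts overlaps of the balls $B_{c_2}^n(\gamma x)$ and is bounded only on compact sets (not globally, in the cusped case). But because $f$ is a function of distance, the left-hand side is \emph{symmetric} in $x$ and $y$: $\sum_\gamma|f(\gamma x,y)|=\sum_\gamma|f(x,\gamma^{-1}y)|=\sum_\gamma|f(\gamma y,x)|$. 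The right-hand side, on the other hand, is independent of $y$. Swapping the roles of $x$ and $y$ in the inequality therefore gives, for each fixed $y$,
\[
\sum_{\gamma\in\Gamma}|f(\gamma x,y)|\ \le\ c_1\,N(y)\int_{\HH^n}|f|\qquad\text{for all }x,
\]
so $x\mapsto\sum_\gamma f(\gamma x,y)$ is \emph{globally bounded}, and a bounded $\Gamma$-periodic function on the finite-volume quotient $\Gamma\backslash\HH^n$ is automatically in $L^2$. This handles cocompact and cusped quotients uniformly, with no case analysis, and is the route the paper takes. Your cocompact argument in fact rediscovers the conclusion "$G$ bounded implies $G\in L^2$," but arrives at the boundedness by a detour (unfolding $\int G^2$ and counting fundamental domains meeting a ball) that cannot be pushed through a cusp; the symmetry observation sidesteps that obstruction entirely.
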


\begin{proof}
Because $\Gamma$ acts discontinuously on $\HH^n$, each of the balls
$B_{c_2}^n(\gamma x)$ with $\gamma \in \Gamma$ intersects only a finite
number of these balls, say $N(x)$ of them (counting itself), and this
function $N$ is bounded on compact sets. Then
\begin{equation} \label{eq:integralupperbd}
\begin{split}
\sum_{\gamma \in \Gamma} |f(\gamma x, y)| &\le c_1 \sum_{\gamma \in \Gamma} \int_{B_{c_2}^n(\gamma x)} |f(y,z)| \, dz\\
& \le
c_1 N(x) \int_{\HH^n} |f(y,z)| \, dz.
\end{split}
\end{equation}
The left side is invariant under switching $x$ and $y$, while the right side is
independent of $y$, from which it follows
that
\[
\sum_{\gamma \in \Gamma} |f(\gamma x, y)|
\]
is bounded for each fixed $y$.  All that remains is to verify uniform
convergence on compact sets, which is not hard to check as follows.  Suppose
$x$ and $y$ are restricted to a compact set $K$, and let $r>0$.  In the upper
bound $\int_{\HH^n} |f(y,z)| \, dz$ from \eqref{eq:integralupperbd}, all $z
\in B^n_r(y)$ come from a finite subset $S$ of $\Gamma$ depending only on $K$
and $r$. It follows that
\[
\sum_{\gamma \not\in S} |f(\gamma x, y)| \le c_1 N(x) \int_{d(z,y) \ge r} |f(y,z)| \, dz,
\]
and the upper bound tends to zero as $r \to \infty$.
\end{proof}

The only place where we require the trace formula machinery is the
proof of the following lemma:

\begin{lemma} \label{lem:hypsumposdef}
Let $f \colon [0,\infty) \to \RR$ be admissible on $\HH^n$ and satisfy
$\widehat{f}(\lambda) \ge 0$ for all $\lambda\ge0$.  If $\Gamma$ is a
discrete subgroup in $G$ with finite covolume, then the function $F$
defined by
\[
F(x,y) = \sum_{\gamma \in \Gamma} f(\gamma x, y)
\]
is positive definite on $\HH^n \times \HH^n$.  Furthermore, $F -
\widehat{f}(0)/\vol(\Gamma \backslash \HH^n)$ remains positive
definite.
\end{lemma}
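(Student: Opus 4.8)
The plan is to deduce the lemma from the \emph{pre-trace formula}, i.e.\ from the spectral expansion of the automorphic kernel $F(x,y) = \sum_{\gamma \in \Gamma} f(\gamma x, y)$ on $\Gamma\backslash\HH^n$; this is the one place where the trace-formula machinery referenced above is genuinely used. The first observations are soft: since $f$ is admissible, the defining sum converges uniformly on compact sets, so $F$ is continuous; it is symmetric in $x,y$ (replace $\gamma$ by $\gamma^{-1}$ and use that $f$ depends only on distance); and it is $\Gamma$-invariant in each variable separately, so it descends to a continuous kernel on $(\Gamma\backslash\HH^n)\times(\Gamma\backslash\HH^n)$ and defines an integral operator $R_f$ on $L^2(\Gamma\backslash\HH^n)$. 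Because $f$ is radial, $R_f$ is convolution by $f$, and hence it commutes with the Laplacian $\Delta$ on $\Gamma\backslash\HH^n$.

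Next I would diagonalize $R_f$ against the spectral decomposition of $\Delta$. On $\HH^n$ itself, convolving the radial integrable function $f$ with a $\Delta$-eigenfunction of eigenvalue $\lambda$ multiplies it by the spherical transform $\widehat f(\lambda)$; applying this to the eigenfunctions of $\Delta$ on $\Gamma\backslash\HH^n$ (the discrete spectrum, including the constant) and to the Eisenstein series (the continuous spectrum, which for $\HH^n$ is contained in $[(n-1)^2/4,\infty)$), one gets the pre-trace formula
\[
F(x,y) = \sum_{j} \widehat f(\lambda_j)\,\phi_j(x)\,\phi_j(y) + (\text{continuous-spectrum term}),
\]
where $\{\phi_j\}$ is an orthonormal set of $\Delta$-eigenfunctions with eigenvalues $\lambda_j \ge 0$ and the continuous-spectrum term is an integral of rank-one Hermitian kernels against the weight $\widehat f$. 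Since $\widehat f(\lambda)\ge 0$ for all $\lambda\ge 0$ and $\mathrm{spec}(\Delta)\subseteq[0,\infty)$, every term on the right is a positive-definite kernel, so $F$ is positive definite. Equivalently, one may phrase this purely operator-theoretically: $R_f = \int \widehat f(\lambda)\,dP(\lambda)$ for the spectral measure $dP$ of $\Delta$, hence $R_f$ is a positive operator, and because $F$ is continuous, testing $R_f$ against approximate point masses at $x_1,\dots,x_N$ forces $\sum_{i,j} t_i t_j F(x_i,x_j)\ge 0$.

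For the last assertion, the key point is that $\widehat f(0)/\vol(\Gamma\backslash\HH^n)$ is exactly the $\lambda = 0$ term of the expansion: the eigenvalue $0$ of $\Delta$ on $\Gamma\backslash\HH^n$ is simple with eigenfunction $\phi_0 = \vol(\Gamma\backslash\HH^n)^{-1/2}$, and $0$ is isolated from the continuous spectrum (which lies in $[(n-1)^2/4,\infty)$ with $n\ge 2$). Thus the $j=0$ summand equals $\widehat f(0)\,\phi_0(x)\phi_0(y) = \widehat f(0)/\vol(\Gamma\backslash\HH^n)$, and subtracting this constant from $F$ simply deletes one nonnegative term, leaving $\sum_{j\ge 1}\widehat f(\lambda_j)\phi_j(x)\phi_j(y) + (\text{continuous-spectrum term})$, which is still positive definite; in operator form, $R_f - \widehat f(0)/\vol(\Gamma\backslash\HH^n)$ is the restriction of the positive operator $R_f$ to the orthogonal complement of the constants, hence still positive.

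The main obstacle is justifying the pre-trace formula in sufficient strength. In the cocompact case this is comparatively clean: $\Delta$ has purely discrete spectrum, $R_f$ is Hilbert–Schmidt (as $F$ is bounded and $\vol(\Gamma\backslash\HH^n)<\infty$), and the eigenfunction expansion of the continuous kernel $F$ converges well enough to read off positive-definiteness. For general finite-covolume $\Gamma$ one must invoke the full Langlands-type decomposition of $L^2(\Gamma\backslash\HH^n)$ together with the absolute and locally uniform convergence of the spectral side — and this is precisely what the three admissibility conditions on $f$ (continuity and integrability, absolute/uniform convergence of $\sum_\gamma f(\gamma x,y)$ on compacta, and $L^2$-membership of the periodization on $\Gamma\backslash\HH^n$) are engineered to guarantee, via the trace-formula references cited above.
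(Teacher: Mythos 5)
Your proposal follows essentially the same route as the paper: expand the automorphic kernel $F$ in the spectral decomposition of $\Gamma\backslash\HH^n$ (discrete eigenfunctions plus, in the finite-covolume noncompact case, Eisenstein series), observe that each term is a rank-one positive-definite kernel weighted by $\widehat{f}(\lambda_j)\ge 0$, and identify $\widehat{f}(0)/\vol(\Gamma\backslash\HH^n)$ with the constant ($\lambda=0$) contribution. The one place where you wave your hands and the paper is careful is the passage from $L^2$-convergence of the spectral expansion to pointwise positive definiteness: the paper extracts a subsequence converging pointwise almost everywhere and then invokes continuity of $F$ to cover all tuples $(x_1,\dots,x_N)$, whereas you either assert the expansion ``converges well enough'' (unjustified as stated) or switch to the operator formulation with approximate point masses, which does fill the gap but is left unelaborated; either fix is fine, but one of them needs to actually be carried out.
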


In other words, for all $x_1,\dots,x_N \in \HH^n$, the $N \times N$
matrix with entries $F(x_i,x_j) - \widehat{f}(0)/\vol(\Gamma \backslash
\HH^n)$ is positive semidefinite.

\begin{proof}
First, suppose $\Gamma\backslash \HH^n$ is compact, so the spectrum of
the Laplacian is discrete.  Let $v_0,v_1,\dots$ be the orthonormal
eigenfunctions of the Laplacian on $\Gamma\backslash \HH^n$, viewed as
periodic functions on $\HH^n$, and let $\lambda_0 \le\lambda_1 \le
\dots$ be the corresponding eigenvalues. The sum
\[
\sum_{\gamma \in \Gamma} f(\gamma x, y)
\]
is periodic modulo $\Gamma$ as a function of $x$ (or $y$), so we can
expand it in terms of the eigenfunctions of the Laplacian.  We have
\[
\sum_{\gamma \in \Gamma} f(\gamma x, y) \simeq \sum_{i=0}^\infty v_i(x) \int_{\Gamma \backslash \HH^n}
\left(\sum_{\gamma \in \Gamma} f(\gamma z, y)\right) \overline{v_i(z)} \, dz,
\]
where $\simeq$ denotes $L^2$ convergence.  The coefficients unfold to
\[
\int_{\HH^n} f(z,y) \overline{v_i(z)} \, dz,
\]
and we can rotationally symmetrize about $y$, which turns $v_i(z)$ into
$v_i(y) P_{\lambda_i}(z,y)$ and the coefficient into
\[
\overline{v_i(y)} \int_{\HH^n} f(z,y)  \overline{P_{\lambda_i}(z,y)} \, dz
= \overline{v_i(y)} \widehat{f}(\lambda_i).
\]
(The conjugate on $P_{\lambda_i}$ does not matter, because this
function is real-valued.)  Thus,
\[
\sum_{\gamma \in \Gamma} f(\gamma x, y) \simeq \sum_{i=0}^\infty \widehat{f}(\lambda_i)
v_i(x) \overline{v_i(y)}.
\]

The functions $(x,y) \mapsto v_i(x) \overline{v_i(y)}$ are clearly
positive definite, and the coefficients $\widehat{f}(\lambda_i)$ are
nonnegative.  Furthermore, positive definiteness is preserved under
pointwise convergence. However, this expansion may not converge
pointwise. Fortunately, $L^2$ convergence implies that a subsequence
converges pointwise almost everywhere (see, for example, Theorem~3.12
in \cite{R87}).  Thus, for almost all $x_1,\dots,x_N \in \HH^n$, the
matrix with entries $F(x_i,x_j)$ is positive semidefinite, and the same
holds for all $x_1,\dots,x_N$ by continuity.  Furthermore, $v_0$ is the
constant eigenfunction, so $v_0 \overline{v_0}$ must be $1/\vol(\Gamma
\backslash \HH^n)$ by orthonormality, and thus $F -
\widehat{f}(0)/\vol(\Gamma \backslash \HH^n)$ is also positive
definite.

All that remains is to deal with the case when $\Gamma \backslash
\HH^n$ has finite volume but is not compact.  Harmonic analysis on the
quotient is quite a bit more involved, because of continuous spectrum
coming from the cusps, but a completely analogous argument works.
Suppose $\Gamma\backslash \HH^n$ has $h$ cusps.  For $1 \le k \le h$
and $s \in \CC$ with $\Re(s) = (n-1)/2$, there is an Eisenstein series $x
\mapsto E_k(x,s)$, which is an eigenfunction with eigenvalue $s(n-1-s)$.
Note that these Eisenstein series are not in $L^2(\Gamma\backslash\HH^n)$.
When $s = (n-1)/2+it$, the eigenvalue becomes $(n-1)^2/4+t^2$, so it is
contained in the support $[(n-1)^2/4,\infty)$ of the Plancherel measure.
The spectral resolution is now
\[
\begin{split}
\sum_{\gamma \in \Gamma} f(\gamma x, y) \simeq &
\sum_{j=0}^\infty \widehat{f}(\lambda_j)
v_j(x) \overline{v_j(y)}\\
& \phantom{} + \frac{1}{4\pi} \sum_{k=1}^h \int_{-\infty}^\infty
\widehat{f}\left(\frac{(n-1)^2}{4}+t^2\right) \cdot {}\\ 
& \qquad \qquad \qquad \ E_k\left(x,\frac{n-1}{2}+it\right) \overline{E_k\left(y,\frac{n-1}{2}+it\right)} \, dt.
\end{split}
\]
See (7.30) in \cite[p.~75]{CS80} for the underlying decomposition of
$L^2(\Gamma\backslash\HH^n)$, although that formula is missing the factor of $1/(4\pi)$.
This expansion means that the left side is the $L^2$ limit of
\[
\begin{split}
&\sum_{j=0}^N \widehat{f}(\lambda_j)
v_j(x) \overline{v_j(y)}\\
&\phantom{} + \frac{1}{4\pi} \sum_{k=1}^h \int_{-T}^T
\widehat{f}\left(\frac{(n-1)^2}{4}+t^2\right) E_k\left(x,\frac{n-1}{2}+it\right) \overline{E_k\left(y,\frac{n-1}{2}+it\right)} \, dt
\end{split}
\]
as $N$ and $T$ tend to infinity.  Now the proof proceeds as in the compact case.
\end{proof}

The proof of Lemma~\ref{lem:hypsumposdef} depends on the hypothesis that
$\widehat{f} \ge 0$ everywhere, not just on the support $[(n-1)^2/4,\infty)$
of the Plancherel measure, because the eigenvalues of the Laplacian on
$\Gamma \backslash \HH^n$ need not be contained in the support: $0$ is always
an eigenvalue, and there can be others between $0$ and $(n-1)^2/4$. Selberg's
eigenvalue conjecture \cite{S65,S95} says there are no nonzero eigenvalues in
$(0,1/4)$ for congruence subgroups of $\SL_2(\ZZ)$, but that is a special
arithmetic property not shared by more general groups.

\begin{theorem}[Cohn, Lurie, and Sarnak] \label{thm:hypLP}
Let $f \colon [0,\infty) \to \RR$ be admissible on $\HH^n$, and suppose
$f(x) \le 0$ for all $x \ge 2r$ while $\widehat{f}(\lambda) \ge0$ for
all $\lambda>0$ and $\widehat{f}(0)>0$. Then every periodic packing in
$\HH^n$ using balls of radius $r$ has density at most
\[
\vol(B_r^n) \frac{f(0)}{\widehat{f}(0)}.
\]
\end{theorem}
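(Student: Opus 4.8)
The plan is to imitate the periodic version of the Cohn--Elkies argument, with the Poisson summation formula replaced by Lemma~\ref{lem:hypsumposdef}. Let $\Gamma\subset G$ be the symmetry group of the given periodic packing, with sphere centers on the distinct orbits $\Gamma x_1,\dots,\Gamma x_N$ and $\Gamma_i$ the stabilizer of $x_i$ in $\Gamma$. As recalled just before the admissibility definition, the density equals
\[
\frac{\vol(B_r^n)}{\vol(\Gamma\backslash\HH^n)}\sum_{i=1}^N \frac{1}{|\Gamma_i|},
\]
so it suffices to prove $\sum_i \frac{1}{|\Gamma_i|} \le \vol(\Gamma\backslash\HH^n)\, f(0)/\widehat f(0)$.

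First I would form the periodized kernel $F(x,y)=\sum_{\gamma\in\Gamma} f(\gamma x,y)$, which is well defined by admissibility and $\Gamma$-invariant in each variable. Since the hypotheses $\widehat f(\lambda)\ge 0$ for $\lambda>0$ and $\widehat f(0)>0$ together give $\widehat f\ge 0$ on all of $[0,\infty)$, Lemma~\ref{lem:hypsumposdef} applies: $F$ is positive definite on $\HH^n$, and so is $F-c$ where $c:=\widehat f(0)/\vol(\Gamma\backslash\HH^n)>0$. Applying positive semidefiniteness of the matrix $\big(F(x_i,x_j)-c\big)_{1\le i,j\le N}$ with the weights $w_i:=1/|\Gamma_i|$ yields the lower bound
\[
\sum_{i,j} w_i w_j\, F(x_i,x_j)\ \ge\ c\Big(\sum_i w_i\Big)^{2}.
\]

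Next I would bound the same sum from above using the packing condition. Expanding, $\sum_{i,j} w_iw_j F(x_i,x_j)=\sum_{i,j}\sum_{\gamma\in\Gamma} w_iw_j\, f(\gamma x_i,x_j)$. Each $\gamma x_i$ is a sphere center, as is $x_j$, and distinct sphere centers lie at hyperbolic distance at least $2r$; since $f(x)\le 0$ for $x\ge 2r$, the term $f(\gamma x_i,x_j)$ is nonpositive unless $\gamma x_i=x_j$. Because the orbits are distinct, $\gamma x_i=x_j$ forces $i=j$ and $\gamma\in\Gamma_i$, which contributes exactly $|\Gamma_i|$ terms each equal to $f(0)$. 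Hence
\[
\sum_{i,j} w_iw_j\,F(x_i,x_j)\ \le\ \sum_i w_i^{2}\,|\Gamma_i|\,f(0)\ =\ f(0)\sum_i w_i .
\]
Combining the two bounds and dividing by $\sum_i w_i>0$ gives $c\sum_i w_i\le f(0)$, i.e.\ $\sum_i 1/|\Gamma_i|\le \vol(\Gamma\backslash\HH^n)\,f(0)/\widehat f(0)$, and substituting into the density formula finishes the proof.

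The genuinely hard input --- producing a positive-definite periodization, and in particular showing $F-c$ remains positive definite --- is exactly Lemma~\ref{lem:hypsumposdef}, which was established via the spectral decomposition of the Laplacian on $\Gamma\backslash\HH^n$ (the pre-trace formula), including the continuous-spectrum contribution from the cusps in the non-cocompact case. That is also where the strengthened hypothesis $\widehat f\ge 0$ \emph{everywhere} (rather than only on the Plancherel support $[(n-1)^2/4,\infty)$) is essential, to accommodate eigenvalues in $(0,(n-1)^2/4)$. Granting that lemma, the remaining step above has no real obstacle; one only needs to keep track of the orbit/stabilizer bookkeeping and to check that each hypothesis is used where expected: $\widehat f(0)>0$ makes $c>0$ so division is legitimate, $f(x)\le 0$ for $x\ge 2r$ together with the packing condition kills all cross terms, and positive definiteness of $F-c$ supplies the matching lower bound.
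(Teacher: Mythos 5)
Your proposal is correct and follows essentially the same route as the paper: periodize $f$ to $F$, invoke Lemma~\ref{lem:hypsumposdef} to get that $F - \widehat f(0)/\vol(\Gamma\backslash\HH^n)$ is positive definite, apply it with weights $1/|\Gamma_i|$, and bound $\sum_{i,j}F(x_i,x_j)/(|\Gamma_i||\Gamma_j|)$ from above via the packing condition ($F(x_i,x_j)\le 0$ off-diagonal, $F(x_i,x_i)\le|\Gamma_i|f(0)$). The bookkeeping with stabilizers and the final rearrangement match the paper's exactly.
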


\begin{proof}
Consider a periodic packing consisting of orbits $\Gamma x_1, \dots,
\Gamma x_N$ of a finite-covolume group $\Gamma$, and let $\Gamma_i$ be
the stabilizer of $x_i$ in $\Gamma$.  Recall that the density of the
packing is
\[
\frac{\vol(B_r^n)}{\vol(\Gamma \backslash \HH^n)} \sum_{i=1}^N \frac{1}{|\Gamma_i|}.
\]

Let
\[
F(x,y) = \sum_{\gamma \in \Gamma} f(\gamma x,y).
\]
By Lemma~\ref{lem:hypsumposdef},
\[
\sum_{i,j=1}^N \frac{1}{|\Gamma_i|\,|\Gamma_j|} 
\left(F(x_i,x_j) -\frac{\widehat{f}(0)}{\vol(\Gamma\backslash\HH^n)} \right) \ge 0,
\]
which amounts to
\[
\sum_{i,j=1}^N \frac{F(x_i,x_j)}{|\Gamma_i|\,|\Gamma_j|} 
\ge \left(\sum_{i=1}^N \frac{1}{|\Gamma_i|}\right)^2
\frac{\widehat{f}(0)}{\vol(\Gamma\backslash\HH^n)}.
\]
On the other hand, $F(x_i,x_j) \le 0$ for $i \ne j$, because all the
terms in the sum defining $F$ are nonpositive in that case, and
$F(x_i,x_i) \le |\Gamma_i| f(0)$, because there are $|\Gamma_i|$ group
elements $\gamma$ for which $\gamma x_i = x_i$.  Thus,
\[
\sum_{i=1}^N  \frac{|\Gamma_i|f(0)}{|\Gamma_i|^2} \ge
\left(\sum_{i=1}^N \frac{1}{|\Gamma_i|}\right)^2
\frac{\widehat{f}(0)}{\vol(\Gamma\backslash\HH^n)}.
\]
We conclude that
\[
\frac{\vol(B_r^n)}{\vol(\Gamma \backslash \HH^n)} \sum_{i=1}^N \frac{1}{|\Gamma_i|}
\le \vol(B_r^n) \frac{f(0)}{\widehat{f}(0)},
\]
as desired.
\end{proof}

\section*{Acknowledgments}

We thank the referees for detailed and valuable feedback on our manuscript,
and we thank Lewis Bowen, Donald Cohn, Thomas Hales, Jacob Lurie, and Peter
Sarnak for helpful conversations.

\appendix

\section{Numerical computation of Euclidean density bounds}
\label{app:numerical}

Before the Cohn-Elkies paper \cite{CE03}, the three best upper bounds known
for sphere packing in $\RR^n$ with $n>3$ were those of Rogers \cite{Rog58},
Levenshtein \cite{Lev79}, and Kabatiansky and Levenshtein \cite{KL78}.  The
Rogers bound was the best known for $4 \le n \le 95$, the Levenshtein bound
for $96 \le n \le 114$, and the Kabatiansky-Levenshtein bound for $n \ge
115$.  See Table~\ref{table:numbers} for numerical data. Note that the
asymptotic decay rates are not apparent from the behavior in low dimensions.

Table~\ref{table:numbers} differs from the bounds presented in Table~1.3 of
\cite{CS99}.  Specifically, \cite[p.~20]{CS99} says that the
Kabatiansky-Levenshtein bound improves on the Rogers bound for $n \ge 43$,
and Table~1.3 lists some special cases, but our computations of the
Kabatiansky-Levenshtein bound disagree. To help resolve this discrepancy, we
will specify how we computed all these bounds.

The Rogers bound is conceptually simple: it is the fraction of a regular
simplex covered by congruent balls centered at its vertices and tangent to each other.
However, it is somewhat complicated to compute explicitly. Based on Chapter~7
of \cite{Zon99}, we used the formula
\[
\frac{(n+1)!}{(n/2)!} \cdot \frac{\pi^{(n-1)/2}}{ 2^{3n/2}}
\int_{-\infty}^\infty e^{(n+1)\big(n/2 - \sqrt{2n} u i - u^2\big)}
\left(1- \erf\left(\sqrt{n/2}-ui\right)\right)^n du
\]
for the Rogers bound in $\RR^n$, where $\erf$ denotes the error function
\[
\erf(x) = \frac{2}{\sqrt{\pi}}\int_0^x e^{-t^2} \, dt.
\]

The Levenshtein bound in $\RR^n$ equals
\[
\frac{j_{n/2}^n}{(n/2)!^2 4^n},
\]
where $j_{n/2}$ is the first positive root of the Bessel function $J_{n/2}$.

For the Kabatiansky-Levenshtein bound, let $t_{n,k}$ denote the largest root of
the Gegenbauer polynomial $C^{n/2-1}_k$ of degree $k$.  Kabatiansky and Levenshtein
proved that
\[
A(n,\theta) \le \frac{4 \binom{k+n-2}{k}}{1-t_{n,k+1}}
\]
whenever $\cos \theta \le t_{n,k}$.  Combining this bound for $A(n+1,\theta)$
with \eqref{eq:lift} and taking $\cos \theta = t_{n+1,k}$ to minimize $\sin(\theta/2)$,
we obtain a sphere packing density bound of
\[
\inf_k \left(\frac{1-t_{n+1,k}}{2}\right)^{n/2}\frac{4 \binom{k+n-1}{k}}{1-t_{n+1,k+1}}
\]
in $\RR^n$.  We have not rigorously analyzed how this bound depends on $k$,
but the infimum appears to be achieved, in fact at a unique local minimum.
In our numerical calculations, we search consecutively through $k=1,2,\dots$
until we find the first local minimum.

Our new bound in this paper (Proposition~\ref{prop:compare}, applied using
the Kabatiansky-Levenshtein bound on $A(n,\theta)$) is given by
\[
\min_k \left(\frac{1-t_{n,k}}{2}\right)^{n/2}\frac{4 \binom{k+n-2}{k}}{1-t_{n,k+1}},
\]
where the minimum is over $k$ satisfying $t_{n,k} \le 1/2$ (which corresponds
to $\theta \ge \pi/3$).  Note that $t_{n,k}$ is an increasing function of $k$
and $t_{n,k} \to 1$ as $k \to \infty$.

\begin{table}
\caption{Upper bounds for sphere packing density in $\RR^n$. The last column
gives the new bound from Proposition~\ref{prop:compare}, applied using the
Kabatiansky-Levenshtein bound for $A(n,\theta)$.  All numbers are rounded
up.} \label{table:numbers}
\begin{center}
\begin{tabular}{ccccc}
\toprule
$n$ & Rogers & Levenshtein & K.-L. & Prop.~\ref{prop:compare} \\
\midrule
$12$ & $8.759 \times 10^{-2\phantom{0}}$  & $1.065 \times 10^{-1}\phantom{0}$  & $1.038 \times 10^{0\phantom{-00}}$  & $9.666 \times 10^{-1\phantom{00}}$\\
$24$ & $2.456 \times 10^{-3\phantom{0}}$  & $3.420 \times 10^{-3}\phantom{0}$  & $2.930 \times 10^{-2\phantom{00}}$  & $2.637 \times 10^{-2\phantom{00}}$\\
$36$ & $5.527 \times 10^{-5\phantom{0}}$  & $8.109 \times 10^{-5}\phantom{0}$  & $5.547 \times 10^{-4\phantom{00}}$  & $4.951 \times 10^{-4\phantom{00}}$\\
$48$ & $1.128 \times 10^{-6\phantom{0}}$  & $1.643 \times 10^{-6}\phantom{0}$  & $8.745 \times 10^{-6\phantom{00}}$  & $7.649 \times 10^{-6\phantom{00}}$\\
$60$ & $2.173 \times 10^{-8\phantom{0}}$  & $3.009 \times 10^{-8\phantom{0}}$  & $1.223 \times 10^{-7\phantom{00}}$  & $1.046 \times 10^{-7\phantom{00}}$\\
$72$ & $4.039 \times 10^{-10}$ & $5.135 \times 10^{-10}$ & $1.550 \times 10^{-9\phantom{00}}$  & $1.322 \times 10^{-9\phantom{00}}$\\
$84$ & $7.315 \times 10^{-12}$ & $8.312 \times 10^{-12}$ & $1.850 \times 10^{-11\phantom{0}}$ & $1.574 \times 10^{-11\phantom{0}}$\\
$96$ & $1.300 \times 10^{-13}$ & $1.291 \times 10^{-13}$ & $2.111 \times 10^{-13\phantom{0}}$ & $1.786 \times 10^{-13\phantom{0}}$\\
$108$& $2.277 \times 10^{-15}$ & $1.937 \times 10^{-15}$ & $2.320 \times 10^{-15\phantom{0}}$ & $1.942 \times 10^{-15\phantom{0}}$\\
$120$& $3.940 \times 10^{-17}$ & $2.826 \times 10^{-17}$ & $2.452 \times 10^{-17\phantom{0}}$ & $2.051 \times 10^{-17\phantom{0}}$\\
$240$ & $6.739 \times 10^{-35}$ & $4.888 \times 10^{-36}$ & $1.542 \times 10^{-37\phantom{0}}$ & $1.267 \times 10^{-37\phantom{0}}$\\
$360$ & $8.726 \times 10^{-53}$ & $3.522 \times 10^{-55}$ & $3.689 \times 10^{-58\phantom{0}}$ & $3.003 \times 10^{-58\phantom{0}}$\\
$480$ & $1.007 \times 10^{-70}$ & $1.643 \times 10^{-74}$ & $5.536 \times 10^{-79\phantom{0}}$ & $4.484 \times 10^{-79\phantom{0}}$\\
$600$ & $1.090 \times 10^{-88}$ & $5.847 \times 10^{-94}$ & $6.233 \times 10^{-100}$ & $5.036 \times 10^{-100}$\\
\bottomrule
\end{tabular}
\end{center}
\end{table}

Using the above formulas, we have computed these bounds for $2 \le n \le 128$
and several larger values of $n$ using \textsc{Mathematica} 9.0.1 \cite{W13}, to
obtain the cross-over points listed above and the data in
Table~\ref{table:numbers}. Strictly speaking, our calculations are not
rigorous, because we have not proved bounds for floating-point error.
Furthermore, we have not proved that the bounds never cross again.  There is
no theoretical reason why these issues could not be addressed, but it
would take some work.

As we mentioned above, our calculations disagree with those in \cite{CS99}.
For example, Table~1.3 of \cite{CS99} says the Kabatiansky-Levenshtein bound
for $\RR^{48}$ is
\[
2^{15.27} \cdot \frac{\pi^{24}}{24!} \approx 5.44 \times 10^{-8},
\]
which is substantially less than the $8.745 \times 10^{-6}$ listed in
Table~\ref{table:numbers}. Page~265 of \cite{CS99} explains that the
Kabatiansky-Levenshtein calculations in Table~1.3 were carried out using
information about the Gegenbauer polynomial roots from \cite[p.~12]{KL78}.
The results in \cite[p.~12]{KL78} are asymptotic formulas that are not
accurate in low dimensions, and we hypothesize that this explains the
discrepancy, although we do not know how to obtain the numbers quoted in
Table~1.3 of \cite{CS99}.

For comparison, the Cohn-Elkies linear programming bound improves on all
these bounds for $4 \le n \le 128$.  Improving on the Kabatiansky-Levenshtein
bound is no surprise by Theorem~\ref{thm:LP-match}, and Proposition~6.1 of
\cite{CE03} says the linear programming bound is always at least as strong as
the Levenshtein bound. Improving on the Rogers bound is the only part we
cannot explain conceptually, and it can be verified using an auxiliary
function with eight forced double roots in the numerical technique from
Section~7 of \cite{CE03}. We do not report linear programming bounds in
Table~\ref{table:numbers}, because we have not completed enough calculations
to give truly representative data.  As the dimension grows, the number of
forced double roots required to optimize the bound grows as well.  Using only
eight of them substantially weakens the bound, but it already suffices to improve on
the other bounds when $4 \le n \le 128$.  For example, using eight forced double
roots leads to a bound of $1.164 \times 10^{-17}$ when $n=120$.

\section{Overlap of balls in hyperbolic space}
\label{app:overlap}

\begin{proposition} \label{prop:overlap}
If $n \ge 2$ and $x_1$ and $x_2$ are points in $\HH^n$ at distance $r$ from
each other, then
\[
\lim_{R \to \infty}\frac{\vol\big(B_R^n(x_1) \cap B_R^n(x_2)\big)}{\vol(B_R^n)} =
\frac{B\left(\frac{1}{1+e^r};
\frac{n-1}{2},\frac{n-1}{2}\right)}{B\left(\frac{1}{2};\frac{n-1}{2},\frac{n-1}{2}\right)}.
\]
\end{proposition}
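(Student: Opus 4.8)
The plan is to compute the limit directly in geodesic polar coordinates about $x_1$, reduce it to a one–dimensional integral over the angle subtended at $x_1$, pass to the limit by dominated convergence, and then evaluate the resulting integral by a Möbius change of variables that turns it into an incomplete beta function.

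First I set up coordinates. In geodesic polar coordinates $(t,\xi)$ about $x_1$ (with $\xi \in S^{n-1}$), the volume element is $\sinh^{n-1} t \, dt\, d\sigma(\xi)$, where $\sigma$ is the standard surface measure, and $\vol(B_R^n) = \Omega_n\int_0^R \sinh^{n-1} t\, dt$. Let $\alpha = \alpha(\xi) \in [0,\pi]$ be the angle between $\xi$ and the direction from $x_1$ to $x_2$. By the hyperbolic law of cosines, the distance $d$ from the point $(t,\xi)$ to $x_2$ satisfies $\cosh d = \cosh r \cosh t - \sinh r \sinh t \cos\alpha$; as a function of $t \ge 0$ this equals $r$ at $t=0$ and tends to $\infty$, so for $R \ge r$ its sublevel set $\{d \le R\}$ on the ray is an interval $[0, t_+(R,\alpha)]$. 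Hence, in these coordinates, $B_R^n(x_1)\cap B_R^n(x_2) = \{(t,\xi) : 0 \le t \le \rho(R,\alpha)\}$ with $\rho(R,\alpha) = \min(R, t_+(R,\alpha))$, and
\[
\frac{\vol\bigl(B_R^n(x_1)\cap B_R^n(x_2)\bigr)}{\vol(B_R^n)}
= \frac{\Omega_{n-1}}{\Omega_n}\int_0^\pi \frac{\int_0^{\rho(R,\alpha)}\sinh^{n-1}t\,dt}{\int_0^{R}\sinh^{n-1}t\,dt}\,\sin^{n-2}\alpha\,d\alpha .
\]

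Second I pass to the limit. From $\cosh r \cosh t_+ - \sinh r \sinh t_+ \cos\alpha = \cosh R$ and $\cosh u, \sinh u \sim e^u/2$, one gets $t_+(R,\alpha) = R - \log(\cosh r - \sinh r\cos\alpha) + o(1)$, hence $\rho(R,\alpha) = R - c(\alpha) + o(1)$ with $c(\alpha) = \max\bigl(0, \log(\cosh r - \sinh r\cos\alpha)\bigr)$; since $\int_0^{R-c}\sinh^{n-1} \big/ \int_0^R \sinh^{n-1} \to e^{-(n-1)c}$, the inner ratio converges pointwise to $\min\bigl(1, (\cosh r - \sinh r\cos\alpha)^{-(n-1)}\bigr)$ for every $\alpha$. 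This ratio is bounded by $1$, so dominated convergence gives
\[
\lim_{R\to\infty}\frac{\vol\bigl(B_R^n(x_1)\cap B_R^n(x_2)\bigr)}{\vol(B_R^n)}
= \frac{\Omega_{n-1}}{\Omega_n}\int_0^\pi \min\bigl(1,(\cosh r - \sinh r\cos\alpha)^{-(n-1)}\bigr)\sin^{n-2}\alpha\,d\alpha .
\]
This is the step that requires a little care: in hyperbolic space the ``shell'' near the boundary of a large ball carries a positive fraction of its volume, so the limit is genuinely nontrivial (it is precisely this that makes \eqref{eq:asympoverlap} fail to equal $1$); but since the inner ratio is uniformly bounded, no more than the elementary asymptotics above is needed.

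Finally I evaluate the integral. Substituting $s = \cos\alpha$ and then $t = (1-s)/2$ (so $1-s^2 = 4t(1-t)$) turns $\int_0^\pi F(\cos\alpha)\sin^{n-2}\alpha\,d\alpha$ into $2\cdot 4^{(n-3)/2}\int_0^1 F(1-2t)\,(t(1-t))^{(n-3)/2}\,dt$; taking $F\equiv 1$ shows $\Omega_{n-1}/\Omega_n = \bigl(2\cdot 4^{(n-3)/2}B(1;\tfrac{n-1}{2},\tfrac{n-1}{2})\bigr)^{-1}$, so the $2\cdot 4^{(n-3)/2}$ cancels and, writing $D(t) = \cosh r - \sinh r(1-2t) = e^{-r}(1-t) + e^{r}t$,
\[
\lim_{R\to\infty}\frac{\vol\bigl(B_R^n(x_1)\cap B_R^n(x_2)\bigr)}{\vol(B_R^n)}
= \frac{\int_0^1 \min\bigl(1,D(t)^{-(n-1)}\bigr)\,t^{(n-3)/2}(1-t)^{(n-3)/2}\,dt}{B\bigl(1;\tfrac{n-1}{2},\tfrac{n-1}{2}\bigr)} .
\]
Since $D(t) \le 1$ exactly for $t \le t_0 := \tfrac{1}{1+e^r}$, I split the numerator at $t_0$. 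On $[0,t_0]$ the integrand is $t^{(n-3)/2}(1-t)^{(n-3)/2}$, contributing $B\bigl(t_0;\tfrac{n-1}{2},\tfrac{n-1}{2}\bigr)$. On $[t_0,1]$ I use the Möbius substitution $w = h(t) := \dfrac{e^{r}t}{e^{-r}(1-t)+e^{r}t}$, for which $1 - h = \dfrac{e^{-r}(1-t)}{D(t)}$, $h'(t) = D(t)^{-2}$, and therefore $h^{(n-3)/2}(1-h)^{(n-3)/2}\,h' = t^{(n-3)/2}(1-t)^{(n-3)/2}\,D(t)^{-(n-1)}$; since $h(t_0) = 1-t_0$ and $h(1) = 1$, this contributes $\int_{1-t_0}^{1} w^{(n-3)/2}(1-w)^{(n-3)/2}\,dw = \int_0^{t_0} w^{(n-3)/2}(1-w)^{(n-3)/2}\,dw = B\bigl(t_0;\tfrac{n-1}{2},\tfrac{n-1}{2}\bigr)$ after $w \mapsto 1-w$. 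Thus the numerator equals $2B\bigl(t_0;\tfrac{n-1}{2},\tfrac{n-1}{2}\bigr)$, and dividing by $B\bigl(1;\tfrac{n-1}{2},\tfrac{n-1}{2}\bigr) = 2B\bigl(\tfrac12;\tfrac{n-1}{2},\tfrac{n-1}{2}\bigr)$ (symmetry of the beta integrand about $t=\tfrac12$) gives exactly $B\bigl(\tfrac{1}{1+e^r};\tfrac{n-1}{2},\tfrac{n-1}{2}\bigr)\big/ B\bigl(\tfrac12;\tfrac{n-1}{2},\tfrac{n-1}{2}\bigr)$, as claimed. The only non-obvious ingredient is the substitution $h$, which is essentially forced: it is the unique Möbius map of $[t_0,1]$ onto $[1-t_0,1]$ absorbing the extra weight $D(t)^{-(n-1)}$ into the standard beta density.
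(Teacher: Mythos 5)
Your proof is correct, and it takes a genuinely different route from the paper's. The paper computes $(\chi_R * \chi_R)(r)/\vol(B_R^n)$ via the general convolution formula \eqref{eq:conv}, parameterizing the intersection by the distances $(r_1,r_2)$ to \emph{both} centers, then passing to $(x,y)=(r_1-r_2,\,2R-r_1-r_2)$, taking asymptotics, integrating out $y$, and finally substituting $t=(e^{-x}-e^{-r})/(e^r-e^{-r})$. You instead use polar coordinates about $x_1$ alone, reduce to a one-dimensional integral over the angle $\alpha$, pass to the limit by dominated convergence, and substitute $t=(1-\cos\alpha)/2$. The two final substitution variables actually coincide (one can check $e^{-x}=\cosh r-\sinh r\cos\alpha$ in the relevant asymptotic regime, so $(e^{-x}-e^{-r})/(e^r-e^{-r})=(1-\cos\alpha)/2$), but the parameterizations are otherwise different in character. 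The paper's symmetric $(r_1,r_2)$ coordinates produce an integrand without a case split, so the integration over $x\in[0,r]$ lands directly on $B\bigl(\tfrac1{1+e^r};\tfrac{n-1}2,\tfrac{n-1}2\bigr)$; your asymmetric polar coordinates produce a $\min(1,\cdot)$ in the integrand, requiring the split at $t_0$ and the Möbius map $h$ to fold the second piece onto the first. What you gain is that the argument is self-contained (no need for the convolution formula, which the paper develops chiefly because it is already relevant to Section~\ref{subsec:obstacles}), the limit passage is cleanly handled by dominated convergence with the trivial bound $1$, and the Möbius identity is a nice closed-form way of seeing why the two halves of the intersection contribute equally.
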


Recall that
\[
B(u;\alpha,\beta) = \int_0^u t^{\alpha-1}
(1-t)^{\beta-1} \, dt
\]
is the incomplete beta function.  To prove
Proposition~\ref{prop:overlap}, we will compute the convolution
$(\chi_R * \chi_R)(r)/\vol(B_R^n)$ on $\HH^n$, where $\chi_R$ is
the characteristic function of a ball of radius $R$, and take
the limit as $R \to \infty$.

First, we observe that given two radial functions $f_1$ and
$f_2$ on $\HH^n$, their convolution can be computed as follows.
Let $x_1$ and $x_2$ be two points in $\HH^n$ at distance $r$
from each other, and consider a third point $z$ at distances
$r_1$ and $r_2$ from $x_1$ and $x_2$, respectively.  Then
$(f_1*f_2)(r)$ is the integral of $f_1(r_1) f_2(r_2)$ over all
$z \in \HH^n$.  To write it down explicitly, we can use polar
coordinates centered at $x_1$.  Let $u = \cos \angle x_2 x_1
z$.  Then
\[
(f_1 * f_2)(r) = \frac{2\pi^{(n-1)/2}}{\Gamma((n-1)/2)}
\int_0^\infty \int_{-1}^1 f_1(r_1) f_2(r_2)
\big(\sinh^{n-1} r_1\big) (1-u^2)^{(n-3)/2} \, du \, dr_1,
\]
where we view $r_2$ as a function of $r_1$ and $u$ via the
hyperbolic law of cosines. Changing variables from $u$ to $r_2$
yields
\begin{equation} \label{eq:conv}
\frac{2\pi^{(n-1)/2}}{\Gamma((n-1)/2)}
\iint f_1(r_1) f_2(r_2)
\frac{\sinh r_1 \sinh r_2}{\sinh^{n-2} r} C(r,r_1,r_2)^{(n-3)/2} \, dr_1 \, dr_2,
\end{equation}
where the integral is over all $r_1$ and $r_2$ such that
$r,r_1,r_2$ form the side lengths of a triangle, and
\[
C(r,r_1,r_2) = 1 - \cosh^2 r - \cosh^2 r_1 - \cosh^2 r_2 + 2 \cosh r \cosh r_1 \cosh r_2.
\]

Now we apply \eqref{eq:conv} to $f_1=f_2=\chi_R$ and divide by
$\vol(B_R^n)$, which is asymptotic to
$2\pi^{n/2}e^{(n-1)R}/((n-1)2^{n-1}\Gamma(n/2))$ as $R \to
\infty$. We find that $(\chi_R * \chi_R)(r)/\vol(B_R^n)$ is
asymptotic to
\begin{equation} \label{eq:asympconv}
\frac{(n-1) 2^{n-1}}{B((n-1)/2,1/2)} e^{-(n-1)R}
\iint_X \frac{\sinh r_1 \sinh r_2}{\sinh^{n-2} r} C(r,r_1,r_2)^{(n-3)/2} \, dr_1 \, dr_2,
\end{equation}
where $B(\alpha,\beta) = B(1;\alpha,\beta) =
\Gamma(\alpha)\Gamma(\beta)/\Gamma(\alpha+\beta)$ denotes the
beta function and $X$ is the set of $(r_1,r_2) \in [0,R]^2$ for
which $r,r_1,r_2$ form a triangle.

We now change variables in \eqref{eq:asympconv} from $r_1$ and
$r_2$ to $x = r_1-r_2$ and $y = 2R-r_1-r_2$.  In the new
variables, the domain $X$ of integration becomes
\[
\{(x,y) : |x| \le r \textup{ and } |x| \le y \le 2R-r\}.
\]
Expanding the hyperbolic trigonometric functions in terms of exponentials
shows that
\[
\sinh r_1 =
\frac{e^{R+(x-y)/2}}{2} + O(1),
\]
\[
\sinh r_2 = \frac{e^{R-(x+y)/2}}{2} + O(1),
\]
and
\[
C(r,r_1,r_2) = e^{2R-y} \frac{\cosh r - \cosh x}{2} + O(1),
\]
where the $O(1)$ terms depend on $r$ but not $x$, $y$, or $R$.  By the mean value
theorem,
\[
C(r,r_1,r_2)^{(n-3)/2} = \left( e^{2R-y} \frac{\cosh r - \cosh x}{2} \right)^{(n-3)/2}
+ O\left(e^{(n-5)R-(n-5)y/2}\right),
\]
where the constant in the big $O$ term depends only on $n$ and $r$.
Now as $R \to \infty$, the integral \eqref{eq:asympconv} converges to
\[
\frac{(n-1) 2^{(n-5)/2}}{B((n-1)/2,1/2)\sinh^{n-2} r}
\int_{-r}^r \int_{|x|}^\infty e^{-(n-1)y/2} (\cosh r - \cosh x)^{(n-3)/2} \, dy \, dx.
\]
We can evaluate the $y$ integral explicitly, and the remaining integrand
is an even function of $x$. The integral thus equals
\[
\frac{2^{(n-1)/2}}{B((n-1)/2,1/2)\sinh^{n-2} r}
\int_{0}^r  e^{-(n-1)x/2} (\cosh r - \cosh x)^{(n-3)/2} \, dx.
\]
Finally, we change to a new variable
\[
t = \frac{e^{-x}-e^{-r}}{e^r - e^{-r}}
\]
to arrive at
\[
\frac{2^{n-1}}{B((n-1)/2,1/2)} \int_0^{1/(1+e^r)} \big(t(1-t)\big)^{(n-3)/2} \, dt.
\]
It follows from the duplication formula for the gamma function
that this expression equals
\[
\frac{\int_0^{1/(1+e^r)} \big(t(1-t)\big)^{(n-3)/2} \, dt}{B((n-1)/2,(n-1)/2)/2}.
\]
This completes the proof of Proposition~\ref{prop:overlap}.


\begin{thebibliography}{MRRW77}

\bibitem[BM07]{BM07}  A.~Barg and O.~R.~Musin, \emph{Codes in spherical
    caps}, Adv.\ Math.\ Commun.\ \textbf{1} (2007), 131--149.
    \MR{2262773} \doi{10.3934/amc.2007.1.131} \arXiv{math/0606734}

\bibitem[B65]{Bas65} L.~A. Bassalygo, \emph{New upper bounds for
    error-correcting codes} (Russian), Problemy
  Pereda\v ci Informacii \textbf{1} (1965), 41--44; English translation in
  Problems of Information Transmission \textbf{1} (1965), 32--35. \MR{0189920}

\bibitem[B33]{Boc33} S.~Bochner, \emph{Monotone Funktionen, Stieltjessche Integrale und harmonische Analyse},
    Math.\ Ann.\ \textbf{108} (1933), 378--410. \MR{1512856} \doi{10.1007/BF01452844}

\bibitem[B03]{B03} L.~Bowen, \emph{Periodicity and circle packings of the hyperbolic plane},
    Geom.\ Dedicata \textbf{102} (2003), 213--236. \MR{2026846} \doi{10.1023/B:GEOM.0000006580.47816.e9} \arXiv{math/0304344}

\bibitem[BR03]{BR03} L.~Bowen and C.~Radin, \emph{Densest packing of
    equal spheres in hyperbolic space}, Discrete Comput.\ Geom.\ \textbf{29}
    (2003), 23--39.  \MR{1946792} \doi{10.1007/s00454-002-2791-7}

\bibitem[BR04]{BR04} L.~Bowen and C.~Radin, \emph{Optimally dense
    packings of hyperbolic space}, Geometriae Dedicata \textbf{104} (2004), 37--59.
    \MR{2043953} \doi{10.1023/B:GEOM.0000022857.62695.15} \arXiv{math/0211417}

\bibitem[B78]{B78} K.~B\"or\"oczky, \emph{Packing of spheres in spaces
    of
    constant curvature}, Acta Math.\ Acad.\ Sci.\ Hungar.\ \textbf{32} (1978), 243--261.
    \MR{0512399} \doi{10.1007/BF01902361}

\bibitem[CS80]{CS80} P.~Cohen and P.~Sarnak, \emph{Selberg trace
    formula}, typed notes, 1980.  Chapters~6 and~7 available at
    \url{http://publications.ias.edu/sarnak/paper/496} and
    \url{http://publications.ias.edu/sarnak/paper/495}, respectively.

\bibitem[C10]{C10} H.~Cohn, \emph{Order and disorder in energy
    minimization}, Proceedings of the International Congress of Mathematicians.
    Volume IV, 2416--2443, Hindustan Book Agency, New Delhi, 2010.
    \MR{2827978} \doi{10.1142/9789814324359_0152} \arXiv{1003.3053}

\bibitem[CE03]{CE03} H.~Cohn and N.~Elkies, \emph{New upper bounds on
    sphere packings I}, Ann.\ of Math.\ (2) \textbf{157} (2003), 689--714. \MR{1973059}
    \doi{10.4007/annals.2003.157.689} \arXiv{math/0110009}

\bibitem[CK04]{CK04} H.~Cohn and A.~Kumar, \emph{The densest
    lattice in twenty-four dimensions}, Electron.\
    Res.\ Announc.\ Amer.\ Math.\ Soc.\ \textbf{10} (2004),
    58--67.  \MR{2075897} \doi{10.1090/S1079-6762-04-00130-1} \arXiv{math/0408174}

\bibitem[CK07]{CK07} H.~Cohn and A.~Kumar, \emph{Universally optimal
    distribution of points on
  spheres}, J.\ Amer.\ Math.\ Soc.\ \textbf{20} (2007), 99--148.
  \MR{2257398} \doi{10.1090/S0894-0347-06-00546-7} \arXiv{math/0607446}

\bibitem[CK09]{CK09} H.~Cohn and A.~Kumar, \emph{Optimality and
    uniqueness of the Leech lattice among lattices},
    Ann.\ of Math.\ (2) \textbf{170} (2009), 1003--1050. \MR{2600869} \doi{10.4007/annals.2009.170.1003}
    \arXiv{math/0403263}

\bibitem[CS99]{CS99} J.~H.~Conway and N.~J.~A.~Sloane, \emph{Sphere
    packings, lattices and groups}, third edition, Grundlehren der Mathematischen
    Wissenschaften \textbf{290}, Springer-Verlag, New York, 1999. \MR{1662447}

\bibitem[DM88]{DM88}  E.~B.~Davies and N.~Mandouvalos, \emph{Heat
    kernel
    bounds on hyperbolic space and Kleinian groups}, Proc.\ London Math.\
    Soc.\ (3) \textbf{57} (1988), 182--208. \MR{0940434} \doi{10.1112/plms/s3-57.1.182}

\bibitem[D72]{Del72} P.~Delsarte, \emph{Bounds for unrestricted
    codes, by linear programming},
  Philips Res.\ Rep.\ \textbf{27} (1972), 272--289. \MR{0314545}

\bibitem[D94]{D94} P.~Delsarte, \emph{Application and generalization of
    the MacWilliams transform in coding theory}, Proceedings of the
    15th Symposium on Information Theory in the Benelux (1994), 9--44.

\bibitem[DGS77]{DGS77} P.~Delsarte, J.~M.~Goethals, and J.~J.~Seidel,
    \emph{Spherical codes and designs}, Geometriae Dedicata \textbf{6} (1977),
    363--388. \MR{0485471} \doi{10.1007/BF03187604}

\bibitem[EGM98]{EGM98} J.~Elstrodt, F.~Grunewald, and J.~Mennicke,
    \emph{Groups acting on hyperbolic space: Harmonic
    analysis and number theory}, Springer-Verlag, Berlin, 1998.
    \MR{1483315}

\bibitem[F67]{F67} L.~D.~Faddeev, \emph{The eigenfunction expansion of
    the
    Laplace operator on the fundamental domain of a discrete group on
    the Loba\v{c}evski\u{\i} plane} (Russian), Trudy Moskov.\ Mat.\
    Ob\v{s}\v{c}.\ \textbf{17} (1967), 323--350; English translation in
    \emph{Transactions of the Moscow Mathematical Society for the year 1967} (Volume 17), 
    American Mathematical Society, 1969, pp.~356--386.  \MR{0236768}

\bibitem[G63]{G63} H.~Groemer, \emph{Existenzs\"atze f\"ur Lagerungen im Euklidischen Raum},
    Math.\ Z.\ \textbf{81} (1963), 260--278. \MR{0163222} \doi{10.1007/BF01111546}

\bibitem[H05]{Hal05} T.~C. Hales, \emph{A proof of the Kepler
    conjecture}, Ann.\ of Math.\ (2) \textbf{162} (2005), 1065--1185. \MR{2179728}
    \doi{10.4007/annals.2005.162.1065}

\bibitem[H08]{H08}  S.~Helgason, \emph{Geometric analysis on symmetric
    spaces}, second edition, Mathematical Surveys and Monographs
    \textbf{39}, American Mathematical Society, 2008.  \MR{2463854}

\bibitem[HST10]{HST10} A.~B.~Hopkins, F.~H.~Stillinger, and
    S.~Torquato, \emph{Spherical codes, maximal local packing density, and the golden ratio},
    J.\ Math.\ Phys.\ \textbf{51} (2010), 043302:1--6. \MR{2662486} \doi{10.1063/1.3372627} \arXiv{1003.3604}

\bibitem[HJ13]{HJ} R.~A.~Horn and C.~R.~Johnson, \emph{Matrix
    analysis}, second edition, Cambridge University Press, 2013.
    \MR{2978290}

\bibitem[KL78]{KL78} G.~A.~Kabatyanskii 
    and V.~I.~Levenshtein, \emph{Bounds for packings on a sphere and in space} (Russian),
    Problemy Pereda\v ci Informacii \textbf{14} (1978), 3--25; English translation in
    Problems of Information Transmission \textbf{14} (1978), 1--17. \MR{0514023}

\bibitem[K07]{K07} M.~G.~Katz, \emph{Systolic geometry and topology},
    Mathematical Surveys and Monographs \textbf{137}, American
    Mathematical Society, 2007. \MR{2292367}

\bibitem[LOV12]{LOV12} D.~de Laat, F.~M.~de Oliveira Filho,
    and F.~Vallentin, \emph{Upper bounds for packings of spheres of several radii},
    preprint, 2012, \arXiv{1206.2608}. 

\bibitem[L75]{L75} V.~I.~Levenshtein, \emph{Maximal packing density of
    $n$-dimensional Euclidean space with equal balls} (Russian), Mat.\ Zametki \textbf{18} (1975),
    301--311; English translation in Math.\ Notes \textbf{18} (1975), 765--771. \MR{0397565}

\bibitem[L79]{Lev79} V.~I.~Leven{\v{s}}te{\u\i}n, \emph{On bounds
    for packings in $n$-dimensional Euclidean space} (Russian), Dokl.\ Akad.\ Nauk SSSR \textbf{245} (1979),
    1299--1303; English translation in Soviet Math.\ Dokl.\ \textbf{20} (1979), 417--421. \MR{529659}

\bibitem[M99]{M99} T.~H.~Marshall, \emph{Asymptotic volume formulae and
    hyperbolic ball packing}, Ann.\ Acad.\ Sci.\ Fenn.\ Math.\ \textbf{24}
    (1999), 31--43.  \MR{1677993}

\bibitem[MRRW77]{MRRW} R.~J.~McEliece, E.~R.~Rodemich, H.~Rumsey, Jr.,
    and L.~R.~Welch, \emph{New
  upper bounds on the rate of a code via the Delsarte-MacWilliams
  inequalities}, IEEE Trans.\ Information Theory \textbf{IT-23} (1977),
  157--166. \MR{0439403} \doi{10.1109/TIT.1977.1055688}

\bibitem[N65]{N65} L.~Nachbin, \emph{The Haar integral},
    D.\ Van Nostrand Company, Inc., Princeton, NJ, 1965.  \MR{0175995}

\bibitem[R04]{R04} C.~Radin, \emph{Orbits of orbs: sphere packing meets
    Penrose tilings}, Amer.\ Math.\ Monthly \textbf{111} (2004), 137--149.
    \MR{2042761} \doi{10.2307/4145214}

\bibitem[R06]{R06} J.~G.~Ratcliffe, \emph{Foundations of hyperbolic manifolds},
    second edition, Graduate Texts in Mathematics \textbf{149}, Springer, New York, 2006.
    \MR{2249478}

\bibitem[R80]{R80} E.~R.~Rodemich, \emph{An inequality in coding
    theory}, Abstracts of Papers Presented to the American Mathematical
    Society \textbf{1} (1980), Abstract 773-05-39, page 15.

\bibitem[R58]{Rog58} C.~A.~Rogers, \emph{The packing of equal
    spheres}, Proc.\ London Math.\ Soc.\ (3)
  \textbf{8} (1958), 609--620. \MR{0102052} \doi{10.1112/plms/s3-8.4.609}

\bibitem[R87]{R87} W.~Rudin, \emph{Real and complex analysis}, third
    edition, McGraw-Hill Book Co., 1987.  \MR{0924157}

\bibitem[S95]{S95} P.~Sarnak, \emph{Selberg's eigenvalue conjecture},
    Notices Amer.\ Math.\ Soc.\ \textbf{42} (1995), 1272--1277.
    \MR{1355461}

\bibitem[S42]{Sch42} I.~J.~Schoenberg, \emph{Positive definite
    functions on spheres}, Duke Math.\ J.\ \textbf{9} (1942), 96--108. \MR{0005922} \doi{10.1215/S0012-7094-42-00908-6}

\bibitem[S56]{S56} A.~Selberg, \emph{Harmonic analysis and
    discontinuous
    groups in weakly symmetric Riemannian spaces with applications to
    Dirichlet series}, J.\ Indian Math.\ Soc.\ (N.S.) \textbf{20} (1956), 47--87.
    \MR{0088511}

\bibitem[S65]{S65} A.~Selberg, \emph{On the estimation of Fourier
    coefficients of modular forms}, Proc.\ Sympos.\ Pure Math.\ \textbf{8},
    pp.~1--15, American Mathematical Society, 1965. \MR{0182610}

\bibitem[T63]{T63} R.~Takahashi, \emph{Sur les repr\'esentations
    unitaires des groupes de Lorentz g\'en\'eralis\'es}, Bull.\ Soc.\
    Math.\ France \textbf{91} (1963), 289--433. \MR{0179296}

\bibitem[T82]{T82} A.~Terras, \emph{Noneuclidean harmonic analysis},
    SIAM Rev.\ \textbf{24} (1982), 159--193. \MR{0652465}
    \doi{10.1137/1024040}

\bibitem[V98]{V98} A.~Valette, \emph{On Godement's characterisation of
    amenability}, Bull.\ Austral.\ Math.\ Soc.\ \textbf{57} (1998), 153--158.
    \MR{1623844} \doi{10.1017/S0004972700031506}

\bibitem[V11]{V11} S.~Vance, \emph{Improved sphere packing lower bounds from Hurwitz lattices},
    Adv.\ Math.\ \textbf{227} (2011), 2144--2156. \MR{2803798} \doi{10.1016/j.aim.2011.04.016} \arXiv{1105.3779}

\bibitem[V13]{V13} A.~Venkatesh, \emph{A note on sphere packings in high dimension},
    Int.\ Math.\ Res.\ Not.\ IMRN \textbf{2013} (2013), 1628--1642. \MR{3044452} \doi{10.1093/imrn/rns096}

\bibitem[V73]{V73} A.~B.~Venkov, \emph{Expansion in automorphic
    eigenfunctions of the Laplace-Beltrami operator in classical
    symmetric spaces of rank one, and the Selberg trace formula} (Russian),
    Trudy Mat.\ Inst.\ Steklov \textbf{125} (1973), 6--55; English
    translation in Proc. Steklov Inst.\ Math.\ \textbf{125} (1973), 1--48. \MR{0562509}

\bibitem[W07]{W07} J.~A.~Wolf, \emph{Harmonic analysis on commutative
    spaces}, Mathematical Surveys and Monographs \textbf{142}, American
    Mathematical Society, 2007. \MR{2328043}

\bibitem[W13]{W13} Wolfram Research, Inc., \textsc{Mathematica}, Version 9.0.1,
Champaign, IL, 2013.

\bibitem[Z99]{Zon99} C.~Zong, \emph{Sphere packings}, Universitext,
    Springer-Verlag, New York, 1999. \MR{1707318}

\end{thebibliography}
\end{document}